\renewcommand{\to}{\longrightarrow}
\newcommand{\rad}{\mathop{\mathrm{rad}}}
\newcommand{\Ind}{\mathop{\mathrm{Ind}}\nolimits}
\newcommand{\Coind}{\mathop{\mathrm{Coind}}\nolimits}
\newcommand{\inv}{^{-1}}
\newcommand{\p}{\varphi}
\newcommand{\ov}[1]{\ensuremath{\overline {#1}}}
\newcommand{\til}[1]{\ensuremath{\widetilde {#1}}}
\newcommand{\Irr}{\mathop{\mathrm{Irr}}\nolimits}
\newcommand{\soc}[1]{\mathrm{Soc}(#1)}
\newcommand{\End}{\mathop{\mathrm{End}}\nolimits}
\newcommand{\Hom}{\mathop{\mathrm{Hom}}\nolimits}
\newcommand{\Ext}{\mathop{\mathrm{Ext}}\nolimits}
\newtheorem{Thm}{Theorem}[section]
\newtheorem{Prop}[Thm]{Proposition}
\newtheorem{Lemma}[Thm]{Lemma}
{\theoremstyle{definition}
}
{\theoremstyle{remark}
}
\newtheorem{Cor}[Thm]{Corollary}
{\theoremstyle{remark}
}
\theoremstyle{remark}
\theoremstyle{remark}
\theoremstyle{remark}
\numberwithin{equation}{section}
\title[Projective indecomposable modules and quivers]{Projective indecomposable modules and quivers for monoid algebras}
\author{Stuart Margolis\ and Benjamin Steinberg}
\address{Department of Mathematics\\
Bar Ilan University\\ 52900 Ramat Gan \\ Israel \and Department of Mathematics\\
    City College of New York\\
    Convent Avenue at 138th Street\\
    New York, New York 10031\\
    USA}
\thanks{The authors were supported by United States-Israel Binational Science Foundation \#2012080 and the second author was supported by NSA MSP \#H98230-16-1-0047.}
\email{margolis@math.biu.ac.il\and bsteinberg@ccny.cuny.edu}
\date{\today}
\subjclass[2000]{20M25,16G10,05E99}
\begin{document}
\begin{abstract}
We give a construction of the projective indecomposable modules and a description of the quiver for a large class of monoid algebras including the algebra of any finite monoid whose principal right ideals have at most one idempotent generator.  Our results include essentially all families of finite monoids for which this has been done previously, for example, left regular bands, $\mathscr J$-trivial and $\mathscr R$-trivial monoids and left regular bands of groups.
\end{abstract}
\maketitle

\section{Introduction}

In a highly influential paper~\cite{BHR}, Bidigare, Hanlon and Rockmore showed
that a number of popular Markov chains, including the Tsetlin library and the
riffle shuffle, are random walks on the faces of a hyperplane arrangement (the
braid arrangement for these two examples).
More importantly, they showed that the representation theory of the monoid of
faces, where the monoid structure on the faces of a central hyperplane
arrangement is given by the Tits projections~\cite{Titsappendix}, could be used
to analyze these Markov chains and, in particular, to compute the spectrum of
their transition operators.

The face monoid of a hyperplane arrangement satisfies the semigroup identities $x^2 = x$ and $xyx = xy$. Semigroups satisfying these identities are known in the literature as left regular bands (although they were studied early on by Sch\"utzenberger~\cite{Schutzlrb} under the more descriptive name ``\textit{treillis gauches},'' translated by G.~Birkhoff in his Math Review as skew lattices, a term which nowadays has a different meaning). Brown developed~\cite{Brown1,Brown2} a theory of random walks on finite left regular bands. He gave numerous examples that do not come from hyperplane arrangements, as well as examples of hyperplane walks that could more easily be modeled on simpler left regular bands. For example, Brown considered random walks on bases of matroids. Brown used the representation theory of left regular bands to extend the spectral results of Bidigare, Hanlon and Rockmore~\cite{BHR} and gave an algebraic proof of the diagonalizability of random walks on left regular bands.

One of the equivalent definitions of a left regular band is that $M$ is that every element of $M$ is an idempotent and distinct elements of $M$ generate distinct principal right ideals. There are many classes of finite monoids that generalize this property and that have arisen in deep ways in algebra and combinatorics. This paper studies the representation theory of such monoids with an eye  towards applications of these results in related fields.

In this paper we  study the representation theory of a collection of finite monoids that have been called `right semi-abundant' in the literature \cite{Ltilde , ElQallili} but which we dub `right Fountain monoids'  in honor of John Fountain as he has advocated  the study of these and related classes of  monoids for a number of years. Fountain monoids generalize the class of (von Neumann) regular monoids, whose representation theory has been extensively studied~\cite{Putcharep3,rrbg}.  In fact many of the results of this paper are extensions of our previous results~\cite{rrbg} to the non-regular setting.  This requires a bit of work because regular monoids have quasi-hereditary algebras in good characteristic~\cite{Putcharep3}, where as a Fountain monoid can have loops in its quiver.

The most important class of Fountain monoids studied in this paper is the collection of finite monoids such that every principal right ideal has at most one idempotent generator. In the semigroup literature this class is denoted by $\mathbf{ER}$. Besides left regular bands, many of the most widely studied classes of monoids belong to the class $\mathbf{ER}$. We discuss a few of these.
Of course all finite groups belong to $\mathbf{ER}$. More generally, inverse semigroups, which are exactly the class of semigroups such that  every principal right and left ideal has a unique idempotent generator belong to $\mathbf{ER}$. Inverse semigroups  are the regular semigroups that have faithful representations by partial one-to-one maps on a set (for this reason, they appear in many applications of semigroup theory) and the class of regular semigroups whose idempotents commute. It is known that they are precisely the semigroups in $\mathbf{ER}$ whose algebras are semisimple over the complex numbers~\cite{repbook}.  The representation theory of inverse semigroups was studied extensively in~\cite{mobius2}.

The class of $\mathscr{J}$-trivial monoids consist of finite monoids such that each two-sided principal ideal has a unique generator and thus is in $\mathbf{ER}$.
A very important example of a $\mathscr{J}$-trivial monoid defined for any finite Coxeter group is the monoid associated to
its $0$-Hecke algebra. Norton first described the representation theory of the $0$-Hecke algebra
of a Coxeter group $W$ in 1979~\cite{norton}, but did not exploit its structure as the monoid algebra of a
monoid $M(W)$. The monoid $M(W)$ has been rediscovered
many times over the years. The easiest way to define it is as the monoid with generating set the
Coxeter generators of $W$ and relations those of $W$ in braid form, and replacing the involution
relation $s^{2} = 1$ by the idempotent relation $s^{2} = s$ for each Coxeter generator $s$. The monoid
$M(W)$ has a number of amazing properties. Its size is exactly that of $W$ and it admits the
strong Bruhat order as a partial order compatible with multiplication. In fact, it is isomorphic
to the monoid of principal order ideals of the Bruhat order under set multiplication.
From the point of view of this proposal, we are interested that the monoid algebra of $M(W)$
is the $0$-Hecke algebra $H_{0}(W)$, as can readily be seen from the presentation for $M(W)$ that
we mentioned above.  A detailed study of the representation theory of $\mathscr J$-trivial monoids was undertaken in~\cite{Jtrivialpaper}.
More generally, a number of authors have considered the representation of $\mathscr R$-trivial monoids, monoids in which each principal right ideal has a unique generator, especially in connection with Markov chains~\cite{DO,BergeronSaliola,ayyer_schilling_steinberg_thiery.2013}.  Left regular bands are precisely the regular $\mathscr R$-trivial monoids.

Recall that a finite monoid is aperiodic if all its group subsemigroups are trivial. Left regular bands, $\mathscr{J}$-trivial monoids and $\mathscr R$-trivial monoids are aperiodic. There are non-aperiodic analogues of these classes of monoids that have also appeared in the literature. The class $\mathbf{LRBG}$ consists of all finite monoids with the property that every principal left ideal is a two-sided ideal.Left regular bands are precisely the aperiodic $\mathbf{LRBGs}$ and it is known that every $\mathbf{LRBG}$ is a member of $\mathbf{ER}$.  The class $\mathbf{LRBG}$ arises naturally in the study of the  Mantaci-Reutenauer algebra~\cite{MantReut}, which is a wreath product analogue of Solomon's Descent Algebra. As a consequence of the main theorem of the authors' work~\cite{rrbg} we computed the quiver of an algebra associated to the Mantaci-Reutenauer algebra.

Two idempotents $e,f$ in a monoid $M$ are conjugate if there exist elements $x,y \in M$ such that $e=xy,f=yx$. The class $\mathbf{DG}$ consists of all finite monoids such that each conjugacy class of idempotents contains a single element. The $\mathscr{J}$-trivial monoids are precisely the aperiodic monoids in $\mathbf{DG}$. Recall that a finite category $C$ is an $EI$-category if every endomorphism is an isomorphism. The representation theory of $EI$-categories has been extensively studied in recent years. If $C$ is a category, its consolidation $S(C)$ is the semigroup whose elements are the morphisms of $C$ plus a new element 0. The product is that of $C$ when it exists and 0 otherwise. It is easy to see that the algebra of the category $C$ is the algebra of the semigroup $S(C)$ modulo the ideal generated by the element 0 (this is called the contracted algebra of a semigroup with 0).  See~\cite[Chapter~8]{repbook} for details on the connection between monoid representation theory and the representation theory of finite categories. It is easy to check that a skeletal category is an $EI$-category if and only if the semigroup $S(C)$ is in $\mathbf{DG}$. The quiver of algebras of semigroups in $\mathbf{DG}$ is among the quivers in the class of rectangular monoids that is considered in~\cite{DO} by the authors.

A block group is a finite monoid such that every principal right ideal and every principal left ideal has at most one idempotent generator. This is the class of monoids that are both in $\mathbf{ER}$ and in the dual class $\mathbf{EL}$ consisting of monoids such that every principal left ideal has at most one idempotent generator. All inverse monoids are block groups. Examples of block groups that are not inverse monoids include the power monoid $P(G)$ of a finite group $G$. This is the monoid of all subsets of $G$ under multiplication of subsets. Another example is the monoid of all Hall relations $H_n$ on a set of size $n$. $H_n$ consists of all relations $R$ such that $X$ contains a permutation (a perfect matching). Not much is known about the representation theory of general block groups.  Also if $C$ is an $EI$-category, then $S(C)$ is a block group.

A final example of an important monoid in $\mathbf{ER}$ is the following. Let $W$ be a finite Coxeter group and let $F(\mathscr H_W)$ be the hyperplane face monoid associated to the reflection arrangement  $\mathscr H_{W}$ of $W$~\cite{BHR,Brown2}; it is a left regular band.  Since $W$ acts on $F(\mathscr H_W)$ by automorphisms, we can form the semidirect product $F(\mathscr H_W)\rtimes W$, which belongs to $\mathbf{ER}$  Its algebra can be identified with the crossed product of $W$ with $\mathbb CF(\mathscr H_W)$. Saliola has noticed possible applications of this monoid in combinatorics and probability theory. The group $W$ is the group of units of $F(\mathscr H_W)\rtimes W$ and if $e=\frac{1}{|W|}\sum_{w\in W} w$, then it is known that $e\mathbb C[F(\mathscr H_W)\rtimes W]e$ is isomorphic to Solomon's descent algebra $\Sigma(W)$ since the decent algebra is the algebra of invariants $\mathbb CF(\mathscr H_W)^W$ of the hyperplane face monoid algebra (cf,~\cite{Brown2}). Computing the quiver of the crossed product could potentially help in computing the quiver of the descent algebra.

This last example is generic for $\mathbf{ER}$. It is known that $\mathbf{ER}$ consists exactly of the finite monoids that are homomorphic images of a submonoid of a semidirect product of a finite $\mathscr R$-trivial monoid and a finite group~\cite{qtheor}. These examples led us to consider questions related to the representation theory of $\mathbf{ER}$ monoids. One of the main results of this paper computes the quiver of the algebra of a monoid $M$ in $\mathbf{ER}$.

In addition to computing quivers, we consider in this paper the problem of describing the projective indecomposable modules for the algebra of a Fountain monoid.  It is notoriously difficult to write down explicit primitive idempotents for monoids algebras (cf.~\cite{Denton,BergeronSaliola}) and often they have complicated expressions in terms of the monoid basis, making it virtually impossible to determine even the dimension of the corresponding projective indecomposable module let alone construct a matrix representation out of it.  In~\cite{rrbg}, we were able to give an explicit construction of projective indecomposable modules for a family of von Neumann regular monoids, whereas in~\cite{DO} we constructed projective indecomposable modules for $\mathscr R$-trivial monoids as certain partial transformation modules.  This paper  provides the common generalization of these results and gives an explicit description of the projective indecomposable modules for the widest class of monoid algebras to date.

The paper is organized as follows. After a section of preliminaries, we recall the notion of a Fountain monoid and prove some new properties of these monoids under the assumption of finiteness.  Then we turn to the question of describing the projective indecomposable modules for the algebra of a class of Fountain monoid.  The final section uses our construction of the projective indecomposables to compute the quiver of a monoid in $\mathbf{ER}$.  This extends a number of previous results~\cite{Saliola,Jtrivialpaper,DO}.

\section{Preliminaries}

\subsection{Finite monoids}
We recall some basic facts from the theory of finite semigroups and monoids \cite{qtheor}.  The reader is referred to~\cite{CP,qtheor,Arbib,repbook} for details.

Let $M$ be a finite monoid.  If $m\in M$, then $m^{\omega}$ denotes the unique idempotent in the cyclic semigroup generated by $m$.    Note that if $M$ has cardinality $n$, then $m^{\omega}=m^{n!}$ and so $(xy)^{\omega}x=x(yx)^{\omega}$ for all $x,y\in M$.   Green's relations $\mathscr R$, $\mathscr L$ and $\mathscr J$ are defined on $M$ by
\begin{itemize}
\item $m\mathrel{\mathscr L} n$ if $Mm=Mn$;
\item $m\mathrel{\mathscr R} n$ if $mM=nM$;
\item $m\mathrel{\mathscr J} n$ if $MmM=MnM$.
\end{itemize}
The $\mathscr L$-class of $m\in M$ is denoted by $L_m$ and similar notation is used for $\mathscr R$- and $\mathscr J$-classes.  One defines the $\mathscr L$-order on $M$ by $m\leq_{\mathscr L} n$ if $Mm\subseteq Mn$.  The quasi-orders $\leq_{\mathscr R}$ and $\leq_{\mathscr J}$ are defined analogously.  A monoid is called \emph{$\mathscr L$-trivial} if each $\mathscr L$-class is a singleton. One defines analogously $\mathscr R$-trivial monoids and $\mathscr J$-trivial monoids.

The set of idempotents of $M$ is denoted by $E(M)$. An element $m\in M$ is \emph{regular} if $m=mnm$ for some $n\in M$. This is equivalent to $L_m\cap E(M)\neq \emptyset$, $R_m\cap E(M)\neq \emptyset$ and $J_m\cap E(M)\neq \emptyset$ (the last equivalence uses finiteness).  A $\mathscr J$-class is called \emph{regular} if it contains an idempotent or, equivalently, contains only regular elements.  An important fact about finite monoids is that they enjoy a property called \emph{stability} which states that
\begin{align*}
xy\mathrel{\mathscr J} x\iff& xy\mathrel{\mathscr R} x\\
xy\mathrel{\mathscr J} y\iff& xy\mathrel{\mathscr L} y
\end{align*}
for $x,y\in M$~\cite[Theorem~1.13]{repbook}.  One consequence of stability is that any $\mathscr R$-class and $\mathscr L$-class in a $\mathscr J$-class intersect.  Another fact about finite semigroups that we shall use is that if $J$ is a $\mathscr J$-class such that $J^2\cap J\neq \emptyset$, then $J$ is regular (cf.~\cite[Corollary~1.24]{repbook}).

If $e\in E(M)$, then $eMe$ is a monoid with identity $e$ and its group of units is denoted $G_e$ and called the \emph{maximal subgroup} of $M$ at $e$.  For a finite monoid, $G_e=J_e\cap eMe$~\cite[Corollary~1.16]{repbook}.  If $e\mathrel{\mathscr J} f$, then $eMe\cong fMf$ and hence $G_e\cong G_f$~\cite[Corollary~1.12]{repbook}.  The group $G_e$ acts freely on the left of $R_e$ by left multiplication~\cite[Proposition~1.10]{repbook} and two elements belong to the same orbit if and only if they are $\mathscr L$-equivalent~\cite[Corollary~1.17]{repbook}.  Dually, $G_e$ acts freely on the right of $L_e$ by right multiplication and two elements are in the same orbit if and only if they are $\mathscr R$-equivalent.

Elements $x,y\in M$ are (generalized) \emph{inverses} if $xyx=x$ and $yxy=y$.  In this case, $xy,yx$ are $\mathscr J$-equivalent idempotents.  Conversely, if $e,f$ are $\mathscr J$-equivalent idempotents of a finite monoid, then there is an inverse pair $x,y$ with $xy=e$ and $yx=f$.  Then $z\mapsto yzx$ gives an isomorphism of $eMe$ with $fMf$, which restricts to an isomorphism of $G_e$ with $G_f$.

A monoid in which every element has at most one inverse is called a \emph{block group}.  Classical examples of block groups are inverse monoids (monoids in which each element has exactly one inverse; these have semisimple algebras in good characteristic), monoids with commuting idempotents, $\mathscr J$-trivial monoids (see~\cite{Jtrivialpaper} for interesting examples from the point of view of representation theory), power sets of finite groups and the monoid of Hall relations (a Hall relation is a binary relation containing a perfect matching). It is a deep theorem of finite semigroup theory that every finite block group is a quotient of a subsemigroup of the power set of a finite group; see~\cite[Chapter~4]{qtheor} for details.

A finite monoid $M$ is called \emph{aperiodic} if all its maximal subgroups are trivial; this is equivalent to there existing $k>0$ with $m^k=m^{k+1}$ for all $m\in M$.

The class $\mathbf{ER}$ consists of those finite monoids $M$ whose idempotents generate an $\mathscr R$-trivial monoid.  The classes $\mathbf{EL}$ and $\mathbf {EJ}$ are defined similarly.  It is known that $\mathbf{EJ}=\mathbf{ER}\cap \mathbf{EL}$ is the class of finite block groups~\cite{qtheor,Almeida:book}.  Also, a finite monoid $M$ belongs to $\mathbf{ER}$ if and only if it contains no two-element right zero subsemigroup, that is, $M$ does not contain two $\mathscr R$-equivalent idempotents.  Every monoid in $\mathbf{ER}$ has the property that it has a unique minimal left ideal, which is also its unique minimal two-sided ideal.  This follows from basic structural properties of minimal ideals of finite semigroups~\cite[Appendix~A]{qtheor}.  Namely,  the two-sided minimal ideal is the disjoint union of all minimal left ideals and each minimal left ideal  can be generated by an idempotent.  If $e,f$ are idempotents generating minimal left ideals, then $(ef)^{\omega}$ must generate the same minimal left ideal as $f$ and so without loss of generality, we may assume that $ef=f$.  But, by stability, it then follows that $f$ generates the same principal right ideal as $e$ and so $fe=e$.  As the idempotents generate an $\mathscr R$-trivial monoid, we deduce that $e=f$.  Note that the class $\mathbf{ER}$ is closed under direct product, submonoids and quotient monoids.  The canonical example of a monoid in $\mathbf{ER}$ is a semidirect product $R\rtimes G$ with $G$ a finite group acting on a finite $\mathscr R$-trivial monoid $R$ by automorphisms. In fact, a result of Stiffler~\cite{Stiffler,qtheor} implies that each finite monoid in $\mathbf{ER}$ is a quotient of a subsemigroup of such a semidirect product.

If $J$ is a regular $\mathscr J$-class and $e\in E(J)$, then a \emph{sandwich matrix} for $J$ is a matrix over $G_e\cup \{0\}$ obtained in the following way.  Let $A$ be the set of $\mathscr R$-classes of $J$ and $B$ be the set of $\mathscr L$-classes of $J$.  By the elementary structure theory of finite semigroups (cf.~\cite[Appendix~A, page 600]{qtheor} or ~\cite{Arbib}) each $\mathscr L$-class of $J$ meets $R_e$ and each $\mathscr R$-class of $J$ meets $L_e$.  Choose representatives $\rho_a\in L_e\cap a$ for each $a\in A$ and $\lambda_b\in R_e\cap b$ for each $b\in B$.  Then $\lambda_b\rho_a\in eMe$ for each $a\in A$ and $b\in B$.  Define a matrix $P\colon B\times A\to G_e\cup \{0\}$ by
\[P(b,a)= \begin{cases} \lambda_b\rho_a, & \text{if}\ \lambda_b\rho_a\in G_e\\ 0, & \text{else.}  \end{cases}\]  One can show that if one changes the representatives of the $\mathscr L$-classes and $\mathscr R$-classes, then the sandwich matrix will change by left and right multiplication by diagonal matrices over $G_e$.  Hence if $\Bbbk$ is a commutative ring with unit, then properties like left, right or two-sided invertibility of $P$ over $\Bbbk G_e$ does not depend on the choices.  One can also show that these properties do not depend on the choice of the idempotent $e$~\cite{Arbib}.

Sometimes, it is fruitful to view $P$ as follows.  We have that $\mathbb ZL_e$ is a free right $\mathbb ZG_e$-module on $|A|$ generators and $\mathbb ZR_e$ is a free left $\mathbb ZG_e$-module on $|B|$ generators.  Thus $\Hom_{\mathbb ZG_e}(\mathbb ZR_e,\mathbb ZG_e)$ is a free right $\mathbb ZG_e$-module on $|B|$ generators.  There is a natural right $\mathbb ZG_e$-module homomorphism $T\colon \mathbb ZL_e\to \Hom_{\mathbb ZG_e}(\mathbb ZR_e,\mathbb ZG_e)$ given by \[T(\ell)(r) = \begin{cases} r\ell, & \text{if}\ r\ell\in G_e\\ 0, &\text{else}\end{cases}\] for $\ell\in L_e$ and $r\in R_e$.  It is easy to check that $P$ is the matrix of $T$ with respect to an appropriate choice of bases.

If $M$ is a block group, then $P$ can always be taken to be an identity matrix and if $M\in \mathbf{ER}$, then $P$ can be taken to have a block diagonal form, where each diagonal block is a row of identity elements of $G_e$.  Hence, for any monoid $M\in \mathbf{ER}$, the sandwich matrices are right invertible over $\mathbb Z$.  We do not prove these assertions here, but we will prove later that $T$ is surjective.

\subsection{Finite dimensional algebras}
Next we review some basic elements of the theory of finite dimensional algebras.  References for this material include~\cite{assem,curtis,benson,Isaacs}.
Fix a field $\Bbbk$. Let $A$ be a finite dimensional $\Bbbk$-algebra.  The \emph{radical} $\rad(A)$ of $A$ is its largest nilpotent ideal.  It is also the intersection of all maximal left (right) ideals. It is the smallest ideal $I$ such that $A/I$ is semisimple. If $V$ is any finite dimensional (left) $A$-module, then the \emph{radical} $\rad(V)$ is the intersection of all maximal submodules of $V$.  One has that $\rad(V)=\rad(A)V$ and $\rad(V)$ is the smallest submodule such that $V/\rad(V)$ is semisimple.  One usually calls $V/\rad(V)$ the \emph{top} of $V$.    The \emph{socle} $\soc{V}$ of $V$ is its largest semisimple submodule, that is, the submodule generated by all simple submodules of $V$.

If $V$ is a right $A$-module, then the vector space dual $D(V)=\Hom_{\Bbbk}(V,\Bbbk)$ is a left $A$-module via $(af)(v) = f(va)$ for $f\colon V\to \Bbbk$, $v\in V$ and $a\in A$ (and dually, one can go from left modules to right modules and we use the same notation).

A finite dimensional algebra $A$ is said to be \emph{split} if $A/\rad(A)$ is a direct product of matrix algebras over $\Bbbk$.  For example, any algebra over an algebraically closed field is split.  This is equivalent to the endomorphism monoid of each simple $A$-module being isomorphic to $\Bbbk$.
 We recall that if $A$ is a semisimple algebra and $S_1,\ldots, S_k$ are representatives of the isomorphism classes of simple $A$-modules, then \[A\cong \bigoplus_{i=1}^k \frac{\dim S_i}{\dim \End_A(S_i)}\cdot S_i\] as an $A$-module.  Also note that every module over a semisimple algebra $A$ is projective and hence every left ideal of $A$ is of the form $Ae$ with $e$ an idempotent.

If $G$ is a group, then $\Bbbk$ is called a \emph{splitting field} for $G$ if $\Bbbk G$ is split.  A famous result of Brauer~\cite[Corollary~9.15, Theorem~10.3]{Isaacs} asserts that if $G$ is of exponent $n$ and $\Bbbk$ contains a primitive $n^{th}$-root of unity, then $\Bbbk$ is a splitting field for $G$.  It is also known that, for a monoid $M$, the algebra $\Bbbk M$ is split if and only if $\Bbbk$ is a splitting field for all the maximal subgroups of $M$, cf.~Proposition~\ref{p:endos} below.
We recall that a group algebra $\Bbbk G$ is semisimple if and only if the characteristic of $\Bbbk$ does not divide $|G|$ by Maschke's theorem.

A module $V$ is \emph{indecomposable} if it cannot be expressed as a direct sum of proper submodules.  If $V$ is a module with $V/\rad(V)$ simple, then necessarily $V$ is indecomposable.  By the Krull-Schmidt theorem, each finite dimensional $A$-module $V$ can be written as a direct sum of indecomposable modules and the isomorphism classes (with multiplicities) are unique. In particular, we can decompose the regular $A$-module as $A=P_1\oplus\cdots \oplus P_s$ where the $P_i$ are projective indecomposable modules.  One has that $P_i/\rad(P_i)$ is simple and $P_i\cong P_j$ if and only if $P_i/\rad(P_i)\cong P_j/\rad(P_j)$.  Moreover, every simple $A$-module is isomorphic to one of the form $P_i/\rad(P_i)$ and every projective indecomposable module is isomorphic to some $P_i$.  Let us assume that $P_1,\ldots, P_k$ form a complete set of representatives of the isomorphism classes of projective indecomposable modules and let $S_i=P_i/\rad(P_i)$ be the corresponding simple module.  Then \[A\cong \bigoplus_{i=1}^k \frac{\dim S_i}{\dim \End_A(S_i)}\cdot P_i.\]  Each finite dimensional $A$-module $V$ has a \emph{projective cover} $P$.  This is a finite dimensional projective module $P$ with a surjective homomorphism $\psi\colon P\to V$ such that $\ker\psi\subseteq \rad(P)$.  The projective module $P$ is unique up to isomorphism and satisfies $P/\rad(P)\cong V/\rad(V)$.  If $S$ is a simple module, its projective cover is the unique projective indecomposable module $P$ with $P/\rad(P)\cong S$.

The injective indecomposable $A$-modules are exactly the vector space duals of projective indecomposable right $A$-modules.  If $S$ is a simple module, there is a unique (up to isomorphism) injective indecomposable module $I$ whose socle is isomorphic to $S$ called the \emph{injective envelope} of $S$; it is the vector space dual of the right projective cover of $D(S)$, which is a simple right $A$-module.

A finite dimensional algebra $A$ is said to be \emph{split basic} if each simple module is one-dimensional.  Every split $\Bbbk$-algebra is Morita equivalent to a unique (up to isomorphism) basic one.  Moreover, Gabriel described split basic algebras in terms of quivers.
A \emph{quiver} $Q$ is  a finite directed graph (possibly with loops and multiple edges).  The path algebra $\Bbbk Q$ is the $\Bbbk$-algebra with basis the directed paths in $Q$ with product induced by concatenation (where undefined concatenations are made $0$).  We allow an empty path at each vertex and compose from right to left as in  a category.   Let $J$ be the ideal spanned by all non-empty paths.  An ideal $I$ of $\Bbbk Q$ is called \emph{admissible} if $J^n\subseteq I\subseteq J^2$ for some $n\geq 2$.  If $I$ is admissible, then $\Bbbk Q/I$ is a finite dimensional split basic $\Bbbk$-algebra and every split basic finite dimensional $\Bbbk$-algebra is isomorphic to one of this form by a theorem of Gabriel.  The quiver $Q$ is unique up to isomorphism, but the ideal $I$ is not.

The \emph{quiver} of a split algebra $A$ is the directed graph with vertex set the set of isomorphism classes $[S]$ of simple $A$-modules $S$.  The number of directed edges from $[S]$ to $[S']$ is $\dim \Ext^1_A(S,S')$.  If $P$ is the projective cover of $S$, then using the long exact sequence for $\Ext$, one easily checks that $\Ext^1_A(S,S')\cong \Hom_A(\rad{P},S')\cong \Hom_A(\rad(P)/\rad^2(P),S')$.  The basic algebra Morita equivalent to $A$ is isomorphic to the path algebra on its quiver modulo an admissible ideal.  Thus computing the quiver of a finite dimensional algebra, like a monoid algebra, is the first step toward understanding its representation theory.

\subsection{The representation theory of finite monoids}\label{ss:reptheory}
We review here key aspects of the representation theory of finite monoids.   More details can be found in~\cite{repbook}.
Let $\Bbbk$ be a field and $M$ a finite monoid.  Fix $e\in E(M)$.  Then $\Bbbk L_{e}$ is a $\Bbbk M$-$\Bbbk G_{e}$-bimodule, where $M$ acts on the left of $L_{e}$ via left multiplication if the result is in $L_{e}$, and otherwise the result is $0$.  Similarly, $\Bbbk R_{e}$ is a $\Bbbk G_{e}$-$\Bbbk M$-bimodule.
 If $V$ is a $\Bbbk G_{e}$-module, we put
 \begin{align*}
 \Ind_{G_{e}}(V)&=\Bbbk L_e\otimes_{\Bbbk G_e} V\\ \Coind_{G_e}(V)&=\Hom_{\Bbbk G_e}(\Bbbk R_e,V)\cong \Hom_{G_e}(R_e,V)\cong D(D(V)\otimes_{\Bbbk G_e}\Bbbk R_e)
  \end{align*}
  where $\Hom_{G_e}(R_e,V)$ is the set of $G_e$-equivariant mappings $R_e\to V$.  One has natural isomorphisms $e\Ind_{G_e}(V)\cong V\cong e\Coind_{G_e}(V)$ and there is a unique $\Bbbk M$-module homomorphism $\p_V\colon \Ind_{G_e}(V)\to \Coind_{G_e}(V)$ which extends the natural isomorphism $e\Ind_{G_e}(V)\to e\Coind_{G_e}(V)$.  If $V$ is semisimple, then $\p_V$ is injective if and only if $\Ind_{G_e}(V)$ is semisimple and $\p_V$ is surjective if and only if $\Coind_{G_e}(V)$ is semisimple (cf.~\cite[Corollary~4.22]{repbook}).

 If $V$ is simple, then  $\Ind_{G_e}(V)$ and $\Coind_{G_e}(V)$ are indecomposable modules and $V^\sharp=\soc{\Coind_{G_e}(V)}=\Bbbk Me\Coind_{G_e}(V)$ is simple and is the image of $\p_V$; hence $\ker \p_V=\rad(\Ind_{G_e}(V))$ and $V^\sharp\cong \Ind_{G_e}(V)/\rad(\Ind_{G_e}(V))$.  The natural morphism $\p_V$ is induced by the sandwich matrix of the $\mathscr J$-class of $e$.  If the characteristic of $\Bbbk$ does not divide the order of $G_e$, then one has that the sandwich matrix of $J_e$ is right invertible over $\Bbbk G_e$ if and only if $\Coind_{G_e}(V)=V^\sharp$ for all simple $\Bbbk G_e$-modules $V$ (cf.~\cite[Lemma~5.20]{repbook} and~\cite[Corollary~4.22]{repbook}).

Fix idempotent representatives $e_1,\ldots, e_n$ of the regular $\mathscr J$-classes of $M$.  Then the isomorphism classes of simple $\Bbbk M$-modules are in bijection with pairs $(i,[V])$ where $i=1,\ldots, n$ and $V$ is a simple $\Bbbk G_{e_i}$-module.  The corresponding simple $\Bbbk M$-module is $V^\sharp$.

\section{Fountain monoids}
The equivalence relation $\til {\mathscr L}$ is defined on a monoid $M$ by $m\mathrel{\til {\mathscr L}} n$ if, for all idempotents $e\in E(M)$, we have $me=m$ if and only if $ne=n$~\cite{Ltilde}.  The $\til {\mathscr L}$-class of $m$ is denoted $\til L_m$.  The relation $\til {\mathscr R}$ is defined dually and we use $\til R_m$ for the $\til{\mathscr R}$-class of $m$.  Notice that if $e$ is an  idempotent, then since $ee=e$ if $m\in \til L_e$, then $me=m$ and so $m\in Me$.  If $m\in Me$, then, for an idempotent $f$, we have that $ef=e$ implies $mf=mef=me=m$ and so $m\in \til L_e$ if and only if $mf=m$ implies $ef=e$ for each idempotent $f\in E(M)$ (cf.~\cite[Lemma~2.1]{Ltilde}).  It follows that $Me\setminus \til L_e$ is a left ideal and so $M$ acts on the left of $\til L_e$ by partial mappings.  That is, if $x\in \til L_e$ and $m\in M$, then the action of $m$ on $x$ is defined if and only if $mx\in \til L_e$, in which case the result of the action is $mx$.  Also it is easy to see that if $m,n\in M$ are regular, then $m\mathrel{\mathscr L} n$ if and only if $m\mathrel{\til{\mathscr{L}}} n$~\cite[Proposition~17.3]{repbook}.    Thus if $e\in E(M)$, then $\til L_e\setminus L_e$ consists of non-regular elements.

If $M$ is a monoid and $m\in M$, then the \emph{right stabilizer} of $m$ is the submonoid of $M$ consisting of those elements $n\in M$ with $mn=m$.

\begin{Prop}\label{p:char.til}
Let $M$ be a finite monoid and $m\in M$. Then $\til L_m$ contains an idempotent if and only if the right stabilizer of $m$ has a unique minimal left ideal.
\end{Prop}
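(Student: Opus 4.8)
The plan is to work throughout with $S=\St(m)$, the right stabilizer of $m$; this is a submonoid of $M$ with $1_M\in S$, so $S$ is a finite monoid and in particular has minimal left ideals. The starting observation — and really the only idea in the proof — is that the class $\til L_m$ is controlled entirely by the set $E_m:=\{f\in E(M):mf=m\}=E(M)\cap S$, which is precisely the set of idempotents of the monoid $S$. Indeed, by definition $m\mathrel{\til{\mathscr L}}n$ holds iff $E_m=E_n$, so $\til L_m$ contains an idempotent iff there is an idempotent $e$ of $M$ with $E_e=E_m$, where $E_e:=\{f\in E(M):ef=e\}$. The one external ingredient I will invoke is the elementary structure theory of finite semigroups: every minimal left ideal of a finite semigroup has the form $Se$ for an idempotent $e$, and every nonempty left ideal contains a minimal one (see \cite[Appendix~A]{qtheor}).

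One remark will be used in both directions: if $e$ is an idempotent of $S$ (equivalently $e\in E_m$), then $E_e\subseteq E_m$, since $ef=e$ together with $me=m$ yields $mf=(me)f=m(ef)=me=m$. For the forward implication I would assume $\til L_m$ contains an idempotent $e$, so that $e\in S$ and $E_m=E_e$, and then show that $Se$ is the unique minimal left ideal of $S$. Given any minimal left ideal $L$ of $S$, write $L=Sh$ with $h$ an idempotent; then $h\in E(M)\cap S=E_m=E_e$, so $eh=e$, hence $e=eh\in Sh=L$. Therefore $Se$ is a nonempty left ideal of $S$ contained in $L$, and minimality of $L$ forces $Se=L$. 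Since $L$ was arbitrary, $Se$ is the only minimal left ideal of $S$.

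For the converse I would assume $S$ has a unique minimal left ideal $L$ and write $L=Se$ with $e$ an idempotent; then $e\in S$, i.e.\ $me=m$, so $E_e\subseteq E_m$ by the remark above. For the reverse inclusion, take $f\in E_m=E(M)\cap S$. Then $Sf$ is a nonempty left ideal of $S$, hence contains a minimal left ideal, which can only be $L=Se$; thus $Se\subseteq Sf$, so $e=sf$ for some $s\in S$, whence $ef=sf\cdot f=sf=e$, i.e.\ $f\in E_e$. Therefore $E_m=E_e$, so $m\mathrel{\til{\mathscr L}}e$ and $\til L_m$ contains the idempotent $e$.

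The argument is short; the substance is entirely in the opening reduction — recognizing that $\til{\mathscr L}$ registers only the idempotents lying in the right stabilizer — after which everything is routine manipulation of minimal left ideals. The only step that deserves a second look is the innocuous fact that $Se$ is a left ideal of $S$ (used when comparing $Se$ with a minimal left ideal), which relies on $1_M\in S$ so that $e\in Se$; I do not anticipate any deeper obstacle.
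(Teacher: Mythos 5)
Your proof is correct and follows essentially the same route as the paper: identify the idempotent in $\til L_m$ as an idempotent of the stabilizer generating its unique minimal left ideal, and conversely take an idempotent generator of that minimal left ideal and verify it is $\til{\mathscr L}$-related to $m$ via the characterization of $\til{\mathscr L}$ by stabilizing idempotents. The only cosmetic difference is in the forward direction, where the paper shows $e\in Nn$ for every $n$ in the stabilizer using $\omega$-powers, while you invoke the standard fact that minimal left ideals of a finite semigroup are idempotent-generated; both are legitimate.
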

\begin{proof}
Let $N$ be the right stabilizer of $m$.  Suppose that $e\in \til L_m$ is an idempotent.  Then since $ee=e$, we have that $me=m$ and so $e\in N$.  We claim that $Ne$ is the unique minimal left ideal of $N$.  Indeed, if $n\in N$, then $mn^{\omega}=m$ and so $en^{\omega}=e$.  Thus $e\in Nn$.

Conversely, suppose that $N$ has a unique minimal left ideal $L$ and let $e\in E(L)$. Note that $me=m$ since $e\in N$. We claim that $e\in \til L_m$.  By the discussion above we must show that if $f\in E(M)$ and $mf=m$, then $ef=e$.  But then $f\in N$ and so $L\subseteq Nf$, whence $ef=e$.  Thus $e\in \til L_m$, thereby completing the proof.
\end{proof}

A monoid is called \emph{right Fountain} if each $\til{\mathscr L}$-class contains an idempotent.  Left Foutain monoids are defined dually.  A monoid is \emph{Fountain} if it is both left and right Fountain. In the literature, the term ``semi-abundant'' is used wherever we have used Fountain, but we have renamed the class in John Fountain's honor as he promoted the study of these monoids over the years.

\begin{Cor}\label{c:examples}
Let $M$ be a finite monoid.  If $M\in \mathbf{ER}$ (i.e., contains no two-element right zero subsemigroups) or if $M$ is a regular monoid, then $M$ is right Fountain.  In particular, block groups (e.g., $\mathscr J$-trivial monoids) are Fountain and $\mathscr R$-trivial monoids (or monoids where regular $\mathscr J$-classes are $\mathscr L$-classes) are right Fountain.
\end{Cor}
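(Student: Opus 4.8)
The plan is to deduce both statements from Proposition~\ref{p:char.til} together with the structural facts about $\mathbf{ER}$ recalled in the preliminaries, treating the hypotheses ``$M\in\mathbf{ER}$'' and ``$M$ regular'' by separate arguments, since neither of these classes contains the other. Suppose first that $M\in\mathbf{ER}$, let $m\in M$, and let $N=\{n\in M : mn=m\}$ be its right stabilizer. Then $N$ is a submonoid of $M$, and since $\mathbf{ER}$ is closed under submonoids we get $N\in\mathbf{ER}$; hence $N$ has a unique minimal left ideal, as recalled above. By Proposition~\ref{p:char.til}, $\til L_m$ contains an idempotent, and as $m$ was arbitrary, $M$ is right Fountain.

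Now suppose instead that $M$ is regular, so every element of $M$ is regular. Then $\mathscr L$ and $\til{\mathscr L}$ coincide on $M$ (using that these two relations agree on regular elements), so the $\til{\mathscr L}$-classes of $M$ are precisely its $\mathscr L$-classes; and every regular element $m$ satisfies $L_m\cap E(M)\neq\emptyset$, so each of these classes contains an idempotent. Hence $M$ is right Fountain. (One could instead verify via Proposition~\ref{p:char.til} that the right stabilizer of $m$ has a unique minimal left ideal, but this is needlessly indirect.)

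The remaining assertions follow formally. A block group is by definition a member of $\mathbf{EJ}=\mathbf{ER}\cap\mathbf{EL}$, hence lies in both $\mathbf{ER}$ and $\mathbf{EL}$, so by the first paragraph and its left--right dual it is both right and left Fountain, i.e.\ Fountain; and $\mathscr J$-trivial monoids are block groups. An $\mathscr R$-trivial monoid has each principal right ideal generated by a unique element, hence by at most one idempotent, so it lies in $\mathbf{ER}$ and the first part applies. Finally, if $M$ is a monoid in which every regular $\mathscr J$-class is an $\mathscr L$-class and $e,f\in E(M)$ satisfy $e\mathrel{\mathscr R}f$, then $e\mathrel{\mathscr J}f$, so $e$ and $f$ lie in a common regular $\mathscr J$-class, which is by hypothesis an $\mathscr L$-class; thus $e\mathrel{\mathscr H}f$, and since an $\mathscr H$-class contains at most one idempotent (it is a group if it contains one), $e=f$. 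So such an $M$ contains no two-element right zero subsemigroup and therefore lies in $\mathbf{ER}$, whence it is right Fountain.

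There is no real obstacle here: the only substantive inputs beyond Proposition~\ref{p:char.til} are the standard facts that $\mathbf{ER}$ is closed under submonoids and that a monoid in $\mathbf{ER}$ has a unique minimal left ideal, both recalled in the preliminaries, and the rest is bookkeeping. The one point to watch is that ``regular'' and ``$\mathbf{ER}$'' genuinely require different arguments — a regular monoid such as a two-element right zero semigroup with an identity adjoined is not in $\mathbf{ER}$ — and that in the regular case one should exploit the coincidence of $\mathscr L$ with $\til{\mathscr L}$ on regular elements rather than trying to force the stabilizer argument through.
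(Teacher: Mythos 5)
Your proposal is correct and follows the route the paper intends: the $\mathbf{ER}$ case via Proposition~\ref{p:char.til} (the right stabilizer is a submonoid, hence in $\mathbf{ER}$, hence has a unique minimal left ideal), the regular case via the coincidence of $\mathscr L$ and $\til{\mathscr L}$ on regular elements together with $L_m\cap E(M)\neq\emptyset$, and the remaining assertions by the containments in $\mathbf{ER}$ (or $\mathbf{EJ}$) recalled in the preliminaries. Your verification that monoids whose regular $\mathscr J$-classes are $\mathscr L$-classes lie in $\mathbf{ER}$ is exactly the expected bookkeeping, so nothing is missing.
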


Although $\til{\mathscr L}$ is not in general a right congruence, it enjoys the following congruence-like property in finite right Fountain monoids.

\begin{Prop}\label{p:local.works}
Let $M$ be a finite right Fountain monoid and suppose that $e,f\in E(M)$ are $\mathscr J$-equivalent.  If $x,y\in M$ with $xyx=x$, $yxy=y$, $xy=e$ and $yx=f$, then $\rho\colon \til L_e\to \til L_f$ given by $\rho(m)=mx$ is a bijection with inverse $\psi\colon \til L_f\to \til L_e$ given by  $\psi(n)=ny$.
\end{Prop}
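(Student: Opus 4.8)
The plan is to translate the assertion into a statement about right stabilizers and then appeal to Proposition~\ref{p:char.til}. I would begin by recording the absorption identities forced by $xyx=x$, $yxy=y$, $xy=e$, $yx=f$, namely $ex=x$, $xf=x$, $fy=y$, $ye=y$; and I would note that, $e$ being an idempotent of $\til L_e$, every $m\in\til L_e$ satisfies $me=m$, and dually $nf=n$ for $n\in\til L_f$. Hence $\psi(\rho(m))=mxy=me=m$ for $m\in\til L_e$ and $\rho(\psi(n))=nyx=nf=n$ for $n\in\til L_f$, so $\rho$ and $\psi$ are mutually inverse as maps into $M$, and it remains only to prove the containments $\rho(\til L_e)\subseteq\til L_f$ and $\psi(\til L_f)\subseteq\til L_e$. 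These are exchanged by the symmetry $e\leftrightarrow f$, $x\leftrightarrow y$ (the hypotheses are symmetric, since $xy$ and $yx$ are automatically $\mathscr J$-equivalent idempotents), so it is enough to treat $\rho(\til L_e)\subseteq\til L_f$.

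So fix $m\in\til L_e$; I must show $mx\in\til L_f$. Let $N$ and $N'$ denote the right stabilizers of $mx$ and of $m$ respectively. Since $xf=x$ we have $mxf=mx$, so $f\in N$. As $M$ is right Fountain, $\til L_{mx}$ contains an idempotent, so by Proposition~\ref{p:char.til} the monoid $N$ possesses a \emph{unique} minimal left ideal, which is therefore its minimal ideal $K(N)$. The point is to prove $f\in K(N)$: granting this, $f$ is an idempotent lying in the unique minimal left ideal of $N$, and the converse part of the proof of Proposition~\ref{p:char.til}, applied with $mx$ in place of $m$ and $f$ in place of $e$, gives $f\in\til L_{mx}$, that is, $mx\in\til L_f$.

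To obtain $f\in K(N)$ it is equivalent to show that $fNf$ is a group. For this I would use the isomorphism $eMe\to fMf$, $z\mapsto yzx$, from the preliminaries, whose inverse is $w\mapsto xwy$; one checks directly that the latter restricts to an injective monoid homomorphism of $fMf$ into $eMe$, and that, using $me=m$ together with $mn'=m$ for $n'\in N'$, $mxn=mx$ for $n\in N$, and $mxf=mx$, it carries $fNf$ into $N'\cap eMe=eN'e$. Now $m\in\til L_e$ forces, by Proposition~\ref{p:char.til}, that $N'$ has $N'e$ as its unique minimal left ideal; so $e$ lies in the minimal ideal of $N'$ and $eN'e$ is a group. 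Since $fNf$ embeds as a submonoid into the finite group $eN'e$, and a non-empty subsemigroup of a finite group is a subgroup, $fNf$ is itself a group, so $f\in K(N)$, completing the argument.

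The step I expect to be the real obstacle is exactly this one, $f\in K(N_{mx})$: one has to exhibit $f$ inside the kernel of the stabilizer of $mx$, and the only available leverage is the transport $w\mapsto xwy$ of $fN_{mx}f$ into the local monoid $eN_me$ of the stabilizer of $m$, which is a group precisely because $e$ is $\til{\mathscr L}$-related to $m$. Everything else is routine bookkeeping with the absorption identities, and the right-Fountain hypothesis is used only to guarantee that $N_{mx}$ has a \emph{unique} minimal left ideal — it is not needed to see that $N_{mx}f$ is a minimal left ideal, only that it is the unique one.
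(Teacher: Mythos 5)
Your proof is correct, but it takes a genuinely different route from the paper's. The paper argues directly at the level of elements: it picks an idempotent $a\in \til L_{mx}$ (this is where right Fountain enters), deduces $af=a$ from $mxf=mx$, computes $(xay)^{\omega}=e$ using $m\mathrel{\til{\mathscr L}}e$, and then gets $f=y(xay)^{\omega}x=fa$, so $a\mathrel{\mathscr L}f$ and hence $mx\in\til L_f$; the symmetric argument and the two-line inverse computation finish it. You instead funnel everything through the stabilizer characterization of Proposition~\ref{p:char.til}: you reduce to showing $f$ lies in the minimal ideal of the right stabilizer $N$ of $mx$, equivalently that $fNf$ is a group, and you get this by transporting $fNf$ into $eN'e$ (with $N'$ the stabilizer of $m$) via $w\mapsto xwy$, where $eN'e$ is a group because $m\mathrel{\til{\mathscr L}}e$ forces $N'e$ to be the (unique) minimal left ideal of $N'$. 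Your version is more structural: it makes transparent exactly where the Fountain hypothesis is used (uniqueness of the minimal left ideal of $N$, needed to pass from $f\in E(K(N))$ back to $f\mathrel{\til{\mathscr L}}mx$), and it isolates reusable facts, at the cost of quoting without proof two standard items of finite semigroup theory — that an idempotent $f$ lies in the minimal ideal of a finite monoid $N$ if and only if $fNf$ is a group (one direction follows from $(fkf)^{\omega}=f$ for $k$ in the minimal ideal), and that the local monoid at an idempotent of the kernel is a group (immediate from $G_e=J_e\cap eMe$ as cited in the preliminaries) — and of appealing to the proofs, not just the statements, of Proposition~\ref{p:char.til} in both directions. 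The paper's computation is shorter and self-contained; yours explains better why it works and would adapt to other situations where one controls stabilizers rather than explicit idempotents.
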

\begin{proof}
First we must show that if $m\in \til L_e$, then $mx\in \til L_f$.  Let $a\in E(M)$ be an idempotent in $\til L_{mx}$, and so $mxa=mx$.  We show that $a\mathrel{\mathscr L} f$ and hence $mx\in \til L_f$.    Since $mxf=mx$, we have that $af=a$ and hence $fafa=faa=fa$.  Notice that $mxay=mxy=me=m$ and so $m(xay)^{\omega}=m$.  Therefore, $(xay)^{\omega}=xy(xay)^{\omega}=e(xay)^{\omega}=e$ as $m\mathrel{\til {\mathscr L}} e$.  Thus, $f=yex=y(xay)^{\omega}x= (yxa)^{\omega}yx=(fa)^{\omega}f=faf=fa$ and so $a\mathrel{\mathscr L} f$.  Therefore, we obtain  $mx\in \til L_f$.

A dual argument shows that $\psi$ maps $\til L_f$ to $\til L_e$.  Clearly, if $m\in \til L_e\subseteq Me$, then $\psi(\rho(m)) = mxy=me=m$ and dually, $\rho\psi$ is the identity.
\end{proof}

We state two immediate consequences of the proposition.

\begin{Cor}\label{c:have.action}
If $M$ is a finite right Fountain monoid and $e\in E(M)$, then $G_e$ acts on the right of $\til L_e$ by right multiplication.
\end{Cor}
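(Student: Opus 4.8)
The plan is to deduce this directly from Proposition~\ref{p:local.works}, taking $f=e$. Recall that $G_e$ is the group of units of the local monoid $eMe$, so each $g\in G_e$ lies in $eMe$, whence $ege=g$ and in particular $eg=g=ge$, and $g$ has a two-sided inverse $g\inv\in G_e$ with $gg\inv=e=g\inv g$. Given $g\in G_e$, I would invoke Proposition~\ref{p:local.works} with the data $f=e$, $x=g$, $y=g\inv$: the four required identities $xyx=x$, $yxy=y$, $xy=e$, $yx=f$ read $gg\inv g=eg=g$, $g\inv gg\inv=eg\inv=g\inv$, $gg\inv=e$, $g\inv g=e$, all of which hold at once. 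The proposition then yields that $\rho_g\colon\til L_e\to\til L_e$ given by $m\mapsto mg$ is a well-defined bijection, with inverse $m\mapsto mg\inv$. The key point established this way is precisely that right multiplication by a unit of $eMe$ does not send an element of $\til L_e$ outside $\til L_e$.

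It then remains only to verify that $m\cdot g:=mg$ defines a right action of the group $G_e$ on the set $\til L_e$. Since $\til L_e\subseteq Me$ (as noted before Proposition~\ref{p:char.til}, because $e$ is idempotent), every $m\in\til L_e$ satisfies $me=m$, so the identity $e$ of $G_e$ acts as the identity map on $\til L_e$. For $g,h\in G_e$ and $m\in\til L_e$, associativity in $M$ gives $(m\cdot g)\cdot h=(mg)h=m(gh)=m\cdot(gh)$, and $gh\in G_e$; so the action axioms hold. (One should merely take care that the bracketing matches the right-action convention, which it does.)

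I do not expect any genuine obstacle here: the only substantive claim—that $mg\in\til L_e$ whenever $m\in\til L_e$ and $g\in G_e$—is exactly the special case $e=f$ of Proposition~\ref{p:local.works}, already proved above, and everything else is a routine check using only associativity and the fact that $e$ acts as the identity on $Me$. If one wanted a fully self-contained argument avoiding the general $x,y$ setup, one could instead rerun the first paragraph of the proof of Proposition~\ref{p:local.works} in the simpler situation where $x=g$, $y=g\inv$ are units of $eMe$, but appealing to the proposition as a black box is cleaner.
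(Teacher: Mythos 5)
Your proposal is correct and follows the paper's own argument exactly: the paper likewise applies Proposition~\ref{p:local.works} with $x=g$, $y=g\inv$ (and $e=f$) to get that right multiplication by $g$ is a bijection of $\til L_e$ with inverse given by $g\inv$, leaving the remaining action axioms as a routine check. Your extra verification that $e$ acts as the identity and that $(mg)h=m(gh)$ is just a slightly more explicit rendering of what the paper leaves implicit.
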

\begin{proof}
If $g\in G_e$ with inverse $g'\in G_e$, then $gg'g=e=g'gg'$ and so right multiplication by $g$ yields a bijection from $\til L_e$ to $\til L_e$ with inverse $g'$.
\end{proof}

\begin{Cor}\label{c:partial.trans}
Let $M$ be a finite right Fountain monoid and $\Bbbk$ a field.  Then, for $e\in E(M)$, the partial transformation module $\Bbbk \til L_e$ is a $\Bbbk M$-$\Bbbk G_e$-bimodule.  If $f\in E(M)$ with $e\mathrel{\mathscr J} f$, then $\Bbbk \til L_e\cong \Bbbk \til L_f$ as left $\Bbbk M$-modules.
\end{Cor}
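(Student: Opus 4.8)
The plan is to assemble the bimodule structure from pieces already available and then to transport it along the bijection supplied by Proposition~\ref{p:local.works}.

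For the first assertion, I would take as the left $\Bbbk M$-module structure on $\Bbbk\til L_e$ the partial transformation module coming from the partial action of $M$ on $\til L_e$ discussed after the definition of $\til{\mathscr L}$ (recall $Me\setminus\til L_e$ is a left ideal), and as the right $\Bbbk G_e$-structure the linear extension of the right $G_e$-action by right multiplication furnished by Corollary~\ref{c:have.action}. The only thing left to verify is that these two actions commute, i.e. that $(m\cdot x)g=m\cdot(xg)$ for $m\in M$, $x\in\til L_e$, $g\in G_e$, where $m\cdot(-)$ denotes the partial action with undefined values read as $0$. The point to watch is the $0$-convention: since right multiplication by $g\in G_e$ and by $g^{-1}$ both carry $\til L_e$ into itself, and since $xe=x$ (as $\til L_e\subseteq Me$) together with associativity in $M$ gives $mxg\cdot g^{-1}=mx$, one has $mx\in\til L_e\iff mxg\in\til L_e$; in the defined case both sides of the identity equal $mxg$ and in the undefined case both are $0$. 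Hence $\Bbbk\til L_e$ is a $\Bbbk M$-$\Bbbk G_e$-bimodule.

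For the second assertion, since $e\mathrel{\mathscr J}f$ are idempotents of a finite monoid, the preliminaries provide an inverse pair $x,y\in M$ with $xyx=x$, $yxy=y$, $xy=e$, $yx=f$. Proposition~\ref{p:local.works} then gives a bijection $\rho\colon\til L_e\to\til L_f$, $\rho(m)=mx$, with inverse $\psi(n)=ny$. I would extend $\rho$ linearly to a map $\Bbbk\til L_e\to\Bbbk\til L_f$; being a permutation of basis elements it is already a linear isomorphism, so it suffices to check it is a left $\Bbbk M$-module homomorphism. The crux is the equivalence
\[ ma\in\til L_e\iff max\in\til L_f\qquad(m\in M,\ a\in\til L_e). \]
The forward implication is immediate, since $\rho(ma)=(ma)x\in\til L_f$ by Proposition~\ref{p:local.works}; for the converse, if $max\in\til L_f$ then $\psi(max)=maxy=ma(xy)=mae=ma$ (using $ae=a$ as $a\in\til L_e\subseteq Me$), and $\psi$ maps $\til L_f$ into $\til L_e$. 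Granting this, when $ma\in\til L_e$ one gets $\rho(m\cdot a)=\rho(ma)=max=m\cdot(ax)=m\cdot\rho(a)$, and when $ma\notin\til L_e$ both $m\cdot a$ and $m\cdot\rho(a)$ vanish (the latter because then $max\notin\til L_f$ while $ax=\rho(a)\in\til L_f$). Thus $\rho$ is an isomorphism of left $\Bbbk M$-modules.

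I expect the whole argument to be short; the only genuine care needed is the bookkeeping for the ``undefined $\mapsto 0$'' convention of the partial action, which is exactly what the displayed equivalence $ma\in\til L_e\iff max\in\til L_f$ isolates. This, rather than any deep input, is the main obstacle.
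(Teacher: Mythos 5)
Your proposal is correct and follows essentially the same route as the paper: the bimodule structure comes from Corollary~\ref{c:have.action} and the isomorphism from linearizing the bijection $\rho(m)=mx$ of Proposition~\ref{p:local.works}. You simply write out the routine verifications (that the two actions commute and the equivalence $ma\in\til L_e\iff max\in\til L_f$ handling the zero convention) that the paper leaves implicit.
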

\begin{proof}
The first statement follows from Corollary~\ref{c:have.action}.  For the second item, since $e\mathrel{\mathscr J} f$ there exist $x,y\in M$ with $xyx=x$, $yxy=y$, $xy=e$ and $yx=f$.  Then $\rho\colon \til L_e\to \til L_f$ given by $\rho(m)=mx$ induces a $\Bbbk M$-module isomorphism $\Bbbk \til L_e\to \Bbbk \til L_f$ by Proposition~\ref{p:local.works}.
\end{proof}

\section{Projective indecomposable modules}
We give an explicit construction of the projective indecomposable modules for a natural class of right Fountain monoids.  This family includes all $\mathscr R$-trivial monoids and all regular monoids whose sandwich matrices are right invertible, and hence includes all families of monoids for which we have previously constructed projective indecomposable modules~\cite{rrbg,DO}.

\begin{Prop}\label{p:radical}
Let $M$ be a finite monoid and $\Bbbk$ a field.  If $L$ is a left ideal of $M$, then $\Bbbk L+\rad(\Bbbk M)=\Bbbk L^2+\rad(\Bbbk  M)$.
\end{Prop}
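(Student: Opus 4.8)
The plan is to reduce everything modulo the radical, where the monoid algebra becomes semisimple, and to show that $\Bbbk L$ and $\Bbbk L^{2}$ have the same image there. Write $A=\Bbbk M/\rad(\Bbbk M)$ and let $\pi\colon \Bbbk M\to A$ be the quotient map, so $\ker\pi=\rad(\Bbbk M)$. One inclusion is immediate: since $L$ is a left ideal of $M$ we have $L^{2}\subseteq ML\subseteq L$, hence $\Bbbk L^{2}\subseteq \Bbbk L$ and so $\Bbbk L^{2}+\rad(\Bbbk M)\subseteq \Bbbk L+\rad(\Bbbk M)$. It therefore suffices to prove the reverse inclusion, and for this it is enough to show $\pi(\Bbbk L^{2})=\pi(\Bbbk L)$, because taking $\pi$-preimages of subspaces gives $\pi^{-1}(\pi(S))=S+\rad(\Bbbk M)$, so $\pi(\Bbbk L^{2})=\pi(\Bbbk L)$ yields $\Bbbk L^{2}+\rad(\Bbbk M)=\Bbbk L+\rad(\Bbbk M)$.

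First I would record that $\Bbbk L$ is a left ideal of $\Bbbk M$ (because $ML\subseteq L$), so $\bar L:=\pi(\Bbbk L)$ is a left ideal of the semisimple algebra $A$; by the fact recalled in the preliminaries that every left ideal of a semisimple algebra has the form $Ae$ with $e$ idempotent, write $\bar L=A\bar e$ with $\bar e\in\bar L$ an idempotent. Next I would identify $\pi(\Bbbk L^{2})$: since $\Bbbk L^{2}$ is spanned by the products $xy$ with $x,y\in L$, it coincides with $(\Bbbk L)(\Bbbk L)$, and as $\pi$ is an algebra homomorphism this gives $\pi(\Bbbk L^{2})=\bar L\,\bar L$. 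The crux is then the identity $\bar L\,\bar L=\bar L$: the subspace $\bar L\,\bar L$ is an $A$-submodule, since $A(\bar L\,\bar L)=(A\bar L)\bar L\subseteq \bar L\,\bar L$, hence a left ideal; it is clearly contained in $\bar L$; and it contains $\bar e=\bar e\cdot\bar e$ with both factors lying in $\bar L=A\bar e$. Therefore $A\bar e\subseteq \bar L\,\bar L\subseteq \bar L=A\bar e$, forcing $\bar L\,\bar L=\bar L$, i.e.\ $\pi(\Bbbk L^{2})=\pi(\Bbbk L)$, which completes the argument.

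I do not expect a serious obstacle: the whole proof is the observation that squaring a left ideal of a semisimple algebra changes nothing, because such an ideal equals $A\bar e$ and the generating idempotent $\bar e$ is itself a product of two of its elements. The only points needing a touch of care are checking that $\bar L\,\bar L$ is genuinely a left ideal and that $\bar e$ lands in $\bar L\,\bar L$ — both routine — and noting that finiteness of $M$ enters only to ensure $\Bbbk M$ is finite dimensional, so that $\rad(\Bbbk M)$ is nilpotent, $A$ is semisimple, and the ``left ideals are generated by idempotents'' fact applies.
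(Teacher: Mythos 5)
Your proof is correct and follows essentially the same route as the paper: pass to the semisimple quotient $A=\Bbbk M/\rad(\Bbbk M)$, use that the image of $\Bbbk L$ is a left ideal generated by an idempotent $\bar e$, and exploit $\bar e=\bar e^{2}$ to conclude the images of $\Bbbk L$ and $\Bbbk L^{2}$ agree. The paper phrases this by squaring an explicit lift $\sum_{m\in L}c_m m$ of the idempotent, while you argue entirely in the quotient via $\bar L\bar L=\bar L$; this is only a cosmetic difference.
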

\begin{proof}
The inclusion from right to left is obvious.  For the other direction, let $A=\Bbbk M/\rad(\Bbbk M)$.  Then $(\Bbbk L+\rad(\Bbbk M))/\rad(\Bbbk M)$ is a left ideal of the semisimple algebra $A$ and hence is generated as a left ideal by an idempotent $e=\sum_{m\in L}c_mm+\rad(\Bbbk M)$.  Then \[e=e^2=\sum_{m,n\in L}c_mc_nmn+\rad(\Bbbk M)\in (\Bbbk L^2+\rad(\Bbbk M))/\rad(\Bbbk M).\]  This establishes the inclusion from left to right.
\end{proof}

The following simple lemma will be useful to prove our main result.

\begin{Lemma}\label{l:rad.hit}
Let $A$ and $B$ be finite dimensional algebras and let $V$ be a finite dimensional $A$-$B$-bimodule.   Let $e\in B$ be an idempotent.  Then $\rad(Ve)=\rad(V)e$ (where the radical is taken as $A$-modules).
\end{Lemma}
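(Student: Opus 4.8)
The plan is to deduce the statement from the standard identity $\rad(W)=\rad(A)W$, valid for any finite dimensional left $A$-module $W$ (recalled in Section~2.2). First I would check that $Ve$ really is a left $A$-submodule of $V$: because $V$ is an $A$-$B$-bimodule, $(av)e=a(ve)$ for all $a\in A$ and $v\in V$, so the subspace $Ve=\{ve\mid v\in V\}$ is stable under the left $A$-action, and it is finite dimensional since $V$ is. Thus $\rad(Ve)$ makes sense as an $A$-module radical.

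Applying the identity to $W=Ve$ gives $\rad(Ve)=\rad(A)(Ve)$. Now the left action of $A$ and the right action of $B$ commute, so for $r\in\rad(A)$ and $v\in V$ one has $r(ve)=(rv)e$; hence $\rad(A)(Ve)=(\rad(A)V)e$. Combining this with $\rad(A)V=\rad(V)$ yields $\rad(Ve)=\rad(V)e$, as claimed.

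The argument is essentially immediate and I do not foresee any real obstacle. The only points requiring a moment's care are that $Ve$ is genuinely a linear subspace (so that $\rad(A)(Ve)$ and $(\rad(A)V)e$ coincide on the nose, not merely up to span) and that the radical is interpreted throughout as the $A$-module radical, the right $B$-structure entering only through the single element $e$ used to cut out the submodule. In particular, idempotency of $e$ is never used in this argument.
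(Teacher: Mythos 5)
Your proof is correct and is essentially the paper's own argument: both rest on the single chain $\rad(Ve)=\rad(A)(Ve)=(\rad(A)V)e=\rad(V)e$, using $\rad(W)=\rad(A)W$ and the commuting of the two actions. Your added remarks (that $Ve$ is an $A$-submodule and that idempotency of $e$ is not actually needed) are accurate but do not change the route.
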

\begin{proof}
We have $\rad(Ve)=\rad(A)(Ve)=(\rad(A)V)e=\rad(V)e$, as required.
\end{proof}

We write $x<_{\mathscr L} y$ to indicate that $Mx\subsetneq My$.

\begin{Prop}\label{p:in.radical}
Let $M$ be a finite monoid and $\Bbbk$ a field.  If $e\in E(M)$, then $\Bbbk [\til L_e\setminus L_e]\subseteq \rad(\Bbbk \til L_e)$.
\end{Prop}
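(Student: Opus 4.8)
The plan is to show that the subspace spanned by the non-regular elements of $\til L_e$ sits inside the radical of the partial transformation module $\Bbbk\til L_e$ by exhibiting a left ideal of $\Bbbk M$ whose contribution is controlled by Proposition~\ref{p:radical}. First I would recall from the discussion in Section~3 that $Me\setminus\til L_e$ is a left ideal of $M$, so that $M$ acts on $\til L_e$ by partial transformations, and that $\til L_e\setminus L_e$ consists entirely of non-regular elements. The key structural observation is that $\til L_e\setminus L_e$ is "downward closed" inside $\til L_e$ in the following sense: if $m\in\til L_e$ and $n\in M$ with $nm\in\til L_e$ but $nm$ strictly below $m$ in the $\leq_{\mathscr L}$-order, then $nm\notin L_e$ — indeed $nm\in L_e$ would force $nm\mathrel{\mathscr L} e\mathrel{\mathscr L} m$ for the regular elements, contradicting $nm<_{\mathscr L} m$ (here one uses that on regular elements $\til{\mathscr L}$ coincides with $\mathscr L$, as noted in the excerpt). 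More importantly, one should check that $L_e$ itself is "closed upward" within $\til L_e$ under the partial action: if $m\in\til L_e\setminus L_e$ and $n\in M$ with $nm\in\til L_e$, then $nm\notin L_e$ unless $nm\mathrel{\mathscr L} m$, but $nm\mathrel{\mathscr L} m$ together with finiteness/stability and $m\notin L_e$ keeps $nm$ out of $L_e$.

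With that in hand, the strategy is: the span $\Bbbk L_e$ is a left $\Bbbk M$-submodule of $\Bbbk\til L_e$ (since once you land in $L_e$ you stay in $L_e$ or leave $\til L_e$), and $\Bbbk[\til L_e\setminus L_e]$ is a complementary subspace but not obviously a submodule. Instead, observe that $L_e = Me \cap \til L_e$ meets $L_e$, and consider the left ideal $I = Me$ of $M$ — or better, work with the sub-bimodule structure. The cleanest route: $\Bbbk\til L_e = (\Bbbk M)e$ as a right $\Bbbk\langle e\rangle$-module after identifying $\til L_e$ with $Me\setminus(\text{left ideal})$... actually the sharper approach is to use Lemma~\ref{l:rad.hit}. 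Consider the $\Bbbk M$-$\Bbbk M$-bimodule $\Bbbk M/\Bbbk(Me\setminus\til L_e)$; right-multiplying by $e$ and intersecting appropriately realizes $\Bbbk\til L_e$, and then $\rad$ is computed as $\rad(\Bbbk M)\cdot(-)$. So $\rad(\Bbbk\til L_e)=\rad(\Bbbk M)\Bbbk\til L_e$, and it suffices to show each non-regular $m\in\til L_e\setminus L_e$ lies in $\rad(\Bbbk M)\Bbbk\til L_e$.

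To get that, fix $m\in\til L_e\setminus L_e$. Since $M$ is finite, consider $m^\omega$... but $m$ need not be in a submonoid. Instead use: $m\in\til L_e$ means there is an idempotent — namely $e$ — with $me=m$; and crucially $m\notin L_e$ means $e\notin Mm$, i.e. $m$ is not $\mathscr L$-above a "too-large" element, equivalently $Mm\subsetneq Me$ strictly, so $m<_{\mathscr L} e$. Now write $m = m\cdot e$ and note $m\in Mm'$ for... the point is to produce $m \in \Bbbk L\cdot\Bbbk\til L_e$ where $L$ is a left ideal with $\Bbbk L\subseteq\rad(\Bbbk M)$. Take $L$ to be the left ideal of $M$ generated by all non-regular elements of $\leq_{\mathscr L}$-rank below $e$; one checks $\Bbbk L + \rad(\Bbbk M)=\Bbbk L^2+\rad(\Bbbk M)$ by Proposition~\ref{p:radical}, and since $L$ consists of non-regular elements, $L^2$ lands strictly lower in the $\mathscr J$-order, so by downward induction on $\mathscr J$-classes $\Bbbk L\subseteq\rad(\Bbbk M)$. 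Then $m = m\cdot e$ with the left factor giving membership.

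\emph{The main obstacle} I anticipate is the bookkeeping that makes $\Bbbk\til L_e$ literally a direct summand (or quotient) of $\Bbbk M$ as a left module so that $\rad(\Bbbk\til L_e)=\rad(\Bbbk M)\Bbbk\til L_e$ applies cleanly — the partial-action subtlety means $\Bbbk\til L_e$ is naturally $\Bbbk M e / \Bbbk(\text{junk})e$, and one must verify via Lemma~\ref{l:rad.hit} with $B=\Bbbk M$, $e$ the chosen idempotent, that the radical is computed the expected way. The other delicate point is the induction on $\mathscr J$-classes showing that left ideals generated by non-regular elements of a finite monoid lie in the radical of the monoid algebra; this is standard (a $\mathscr J$-class $J$ with $J^2\cap J=\emptyset$ contributes a nilpotent piece) but must be invoked carefully, citing the fact quoted in the preliminaries that $J^2\cap J\neq\emptyset$ forces $J$ regular. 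Once those two facts are lined up, the containment $\Bbbk[\til L_e\setminus L_e]\subseteq\rad(\Bbbk\til L_e)$ follows by writing each non-regular element as (non-regular left factor)$\cdot e$ and using $\rad(\Bbbk M)\Bbbk\til L_e = \rad(\Bbbk\til L_e)$.
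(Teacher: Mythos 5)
Your reduction to showing $\rad(\Bbbk \til L_e)=\rad(\Bbbk M)\Bbbk \til L_e$ is harmless (this identity holds for every finite dimensional module, so no bimodule manoeuvre or appeal to Lemma~\ref{l:rad.hit} is needed), but the step that is supposed to produce membership in $\rad(\Bbbk M)$ fails. You take $L$ to be the left ideal of $M$ generated by the non-regular elements below $e$ and conclude $\Bbbk L\subseteq \rad(\Bbbk M)$ ``since $L$ consists of non-regular elements.'' It does not: a left ideal of a finite monoid is closed under $x\mapsto x^{\omega}$, so any nonempty left ideal contains idempotents (and indeed meets the minimal ideal, which is regular), and no idempotent lies in the nilpotent ideal $\rad(\Bbbk M)$; for the same reason $L^2$ does not descend in the $\mathscr J$-order. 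More fundamentally, no single element $m\in M$ ever lies in $\rad(\Bbbk M)$ (its powers are nonzero basis vectors, so $m$ is not nilpotent), so the factorization $m=m\cdot e$ can never witness $m\in \rad(\Bbbk M)\Bbbk \til L_e$: one needs a genuine linear combination. A smaller slip: $\Bbbk L_e$ is not a $\Bbbk M$-submodule of $\Bbbk \til L_e$ (already for $M=\{1,a,0\}$ with $a^2=0$ one has $a\cdot 1=a\in \til L_1\setminus L_1$), though this claim is not load-bearing in your argument.

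The missing idea is to apply Proposition~\ref{p:radical} locally, to $L=Mx$ for each individual $x\in \til L_e\setminus L_e$, combined with induction on $<_{\mathscr L}$ inside $\til L_e\setminus L_e$. From $\Bbbk Mx+\rad(\Bbbk M)=\Bbbk(MxMx)+\rad(\Bbbk M)$ one writes $x=\sum_i c_ia_ixb_ix+r$ with $r\in \rad(\Bbbk M)$. Since $x$ is not regular (as $\til L_e\setminus L_e$ contains no regular elements), one has $a_ixb_ix<_{\mathscr L}x$: equality would give $x\in MxMx$, forcing $J_x^2\cap J_x\neq\emptyset$ and hence $J_x$ regular. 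Thus each $a_ixb_ix$ is either outside $\til L_e$ or lies in $\til L_e\setminus L_e$ strictly $<_{\mathscr L}$-below $x$, hence in $\rad(\Bbbk \til L_e)$ by induction. Multiplying the identity on the right by $e$ and reading it in $\Bbbk \til L_e$ (the quotient of $\Bbbk Me$ by the span of the left-closed subset $Me\setminus \til L_e$, so the terms outside $\til L_e$ vanish and $re\in \rad(\Bbbk M)\Bbbk \til L_e=\rad(\Bbbk \til L_e)$) yields $x\in \rad(\Bbbk \til L_e)$. This is the paper's argument; as written, your proposal does not reach it.
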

\begin{proof}
If $\til L_e=L_e$, there is nothing to prove and so we assume that $L_e\subsetneq \til L_e$.
Let $x\in \til L_e\setminus L_e$.  Assume that each $y\in \til L_e\setminus L_e$ with $y<_{\mathscr L} x$ belongs to $\rad(\Bbbk \til L_e)$. (This holds vacuously if $x$ is $<_{\mathscr L}$-minimal in $\til L_e\setminus L_e$.)  We show that $x\in \rad(\Bbbk \til L_e)$.  Let $L=Mx$.  Then by Proposition~\ref{p:radical} we have that $x=\sum_{i\in J}c_ia_ixb_ix+r$ with $c_i\in \Bbbk$, $a_i,b_i\in M$ and $r\in \rad(\Bbbk M)$.  Let $F\subseteq J$ be the set of indices $i$ with $a_ixb_ix\in \til L_e$.  Notice that since $x$ is not regular, $a_ixb_ix<_{\mathscr L} x$ (and hence $a_ixb_ix\notin L_e$ as $x\in Me$). Thus $a_ixb_ix\in \rad(\Bbbk \til L_e)$ whenever $i\in F$ by our assumption. Therefore, in $\Bbbk \til L_e$, we have that
\[x=xe=\sum_{i\in F} c_ia_ixb_ix+re\in \rad(\Bbbk \til L_e).\]  This completes the proof.
\end{proof}

The following proposition is an elementary exercise in representation theory.

\begin{Prop}\label{p:rad.quotient}
If $U$ and $V$ are finite dimensional modules over a finite dimensional algebra $A$ with $U\subseteq \rad(V)$, then $\rad(V/U)=\rad(V)/U$ and $V/\rad(V)\cong (V/U)/\rad(V/U)$.
\end{Prop}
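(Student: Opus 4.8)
The plan is to reduce both assertions to the single identity $\rad(W)=\rad(A)W$, valid for every finite dimensional $A$-module $W$, which was recalled in the subsection on finite dimensional algebras. First I would apply this identity with $W=V/U$, so that $\rad(V/U)=\rad(A)(V/U)$. Writing $\pi\colon V\to V/U$ for the canonical surjection, one has $\rad(A)(V/U)=\pi(\rad(A)V)=\pi(\rad(V))=(\rad(V)+U)/U$; and since the hypothesis gives $U\subseteq\rad(V)$, the numerator collapses, yielding $\rad(V/U)=\rad(V)/U$, which is the first claim.

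For the second assertion I would simply invoke the third isomorphism theorem for $A$-modules. Because $U\subseteq\rad(V)\subseteq V$, the canonical map induces an isomorphism $(V/U)/(\rad(V)/U)\cong V/\rad(V)$. Substituting the first part, namely $\rad(V/U)=\rad(V)/U$, into the left-hand side gives $(V/U)/\rad(V/U)\cong V/\rad(V)$, as desired.

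I do not anticipate any genuine obstacle; the argument is purely formal once $\rad(-)=\rad(A)(-)$ is in hand. The only place where the hypothesis $U\subseteq\rad(V)$ (as opposed to merely $U\subseteq V$) is used is the step where $\rad(V)+U$ is replaced by $\rad(V)$, which is also exactly what is needed for the quotient $\rad(V)/U$ to make sense as a submodule of $V/U$.
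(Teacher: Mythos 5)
Your argument is correct and is essentially the same as the paper's: both compute $\rad(V/U)=\rad(A)(V/U)=(\rad(A)V+U)/U=(\rad(V)+U)/U=\rad(V)/U$ using the hypothesis $U\subseteq\rad(V)$, and then deduce the second claim from the third isomorphism theorem. No gaps.
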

\begin{proof}
We compute that $\rad(V/U)=\rad(A)\cdot V/U = (\rad(A)V+U)/U=(\rad(V)+U)/U=\rad(V)/U$.  The second statement follows from the first and the usual isomorphism theorem.
\end{proof}

We recall that if $M$ is right Fountain, then $\Bbbk \til L_e$ is a $\Bbbk M$-$\Bbbk G_e$-bimodule for $e\in E(M)$ by Corollary~\ref{c:partial.trans}.

\begin{Cor}\label{C:is.indec}
Let $M$ be a finite right Fountain monoid, $\Bbbk$ a field and $e\in E(M)$.  Suppose that the characteristic of $\Bbbk$ does not divide $|G_e|$ and that $V$ is a simple $\Bbbk G_e$-module.  Then $\Bbbk \til L_e\otimes_{\Bbbk G_e} V$ is an indecomposable $\Bbbk M$-module with simple top $V^\sharp$ (the simple $\Bbbk M$-module corresponding to $V$).
\end{Cor}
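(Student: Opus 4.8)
The plan is to read off the top of $\Bbbk\til L_e\otimes_{\Bbbk G_e}V$ from the top of $\Ind_{G_e}(V)=\Bbbk L_e\otimes_{\Bbbk G_e}V$, exploiting the filtration $L_e\subseteq\til L_e$ together with Proposition~\ref{p:in.radical}.

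First I would check that $\Bbbk[\til L_e\setminus L_e]$ is a $\Bbbk M$-$\Bbbk G_e$-subbimodule of $\Bbbk\til L_e$. For the left $\Bbbk M$-action: if $x\in\til L_e\setminus L_e$ and $m\in M$ with $mx\in\til L_e$, then $mx\leq_{\mathscr L}x\leq_{\mathscr L}e$ (the latter since $x\in\til L_e\subseteq Me$); were $mx\in L_e$, then $e\mathrel{\mathscr L}mx$ would force $Mx=Me$, i.e.\ $x\in L_e$, a contradiction, so $mx\in\til L_e\setminus L_e$. For the right $\Bbbk G_e$-action: right multiplication by $g\in G_e$ is a bijection of $\til L_e$ by Corollary~\ref{c:have.action}, and $xg\mathrel{\mathscr L}x$ since $g$ is a unit of $eMe$, so it preserves $\til L_e\setminus L_e$. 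Hence $\Bbbk\til L_e/\Bbbk[\til L_e\setminus L_e]\cong\Bbbk L_e$ as $\Bbbk M$-$\Bbbk G_e$-bimodules, and by Proposition~\ref{p:in.radical} we have $\Bbbk[\til L_e\setminus L_e]\subseteq\rad(\Bbbk\til L_e)$.

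Next I would tensor the short exact sequence $0\to\Bbbk[\til L_e\setminus L_e]\to\Bbbk\til L_e\to\Bbbk L_e\to 0$ of bimodules over $\Bbbk G_e$ with $V$. This is where the hypothesis $\operatorname{char}\Bbbk\nmid|G_e|$ is used: then $\Bbbk G_e$ is semisimple (Maschke), so $V$ is projective, hence flat; consequently the sequence remains exact and the surjection $\Bbbk\til L_e\otimes_{\Bbbk G_e}V\twoheadrightarrow\Bbbk L_e\otimes_{\Bbbk G_e}V=\Ind_{G_e}(V)$ has kernel $\Bbbk[\til L_e\setminus L_e]\otimes_{\Bbbk G_e}V$. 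Flatness also gives, for any $\Bbbk M$-$\Bbbk G_e$-bimodule $X$, that $\rad(X\otimes_{\Bbbk G_e}V)=\rad(\Bbbk M)X\otimes_{\Bbbk G_e}V$ equals (as a submodule) the image of $\rad(X)\otimes_{\Bbbk G_e}V$; taking $X=\Bbbk\til L_e$ and using the containment above shows $\Bbbk[\til L_e\setminus L_e]\otimes_{\Bbbk G_e}V\subseteq\rad(\Bbbk\til L_e\otimes_{\Bbbk G_e}V)$.

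Finally, Proposition~\ref{p:rad.quotient} applied with $U=\Bbbk[\til L_e\setminus L_e]\otimes_{\Bbbk G_e}V$ yields
\[\frac{\Bbbk\til L_e\otimes_{\Bbbk G_e}V}{\rad(\Bbbk\til L_e\otimes_{\Bbbk G_e}V)}\cong\frac{\Ind_{G_e}(V)}{\rad(\Ind_{G_e}(V))}\cong V^\sharp,\]
which is simple because $V$ is simple (Subsection~\ref{ss:reptheory}). Since a module with simple top is indecomposable, $\Bbbk\til L_e\otimes_{\Bbbk G_e}V$ is indecomposable with top $V^\sharp$, as claimed. The only points demanding care are the bimodule bookkeeping of the first step (resting on stability and the $\mathscr L$-order) and the compatibility of $\rad$ with $-\otimes_{\Bbbk G_e}V$; I expect the latter — and thus the essential role of the characteristic hypothesis — to be the crux of the argument.
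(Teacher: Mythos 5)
Your argument is essentially the paper's proof: the same short exact sequence $0\to\Bbbk[\til L_e\setminus L_e]\to\Bbbk \til L_e\to\Bbbk L_e\to 0$ of $\Bbbk M$-$\Bbbk G_e$-bimodules, tensored with $V$ (projective since $\Bbbk G_e$ is semisimple), then Proposition~\ref{p:in.radical} and Proposition~\ref{p:rad.quotient}; your flatness computation identifying $\rad(X\otimes_{\Bbbk G_e}V)$ with the image of $\rad(X)\otimes_{\Bbbk G_e}V$ is exactly the paper's step $X\otimes_{\Bbbk G_e}V\cong X\eta$ for a primitive idempotent $\eta$ with $V\cong\Bbbk G_e\eta$, combined with Lemma~\ref{l:rad.hit}. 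One small slip in your bimodule bookkeeping: the assertion that $xg\mathrel{\mathscr L}x$ for $x\in\til L_e$ and $g\in G_e$ is false in general (one only gets $xg\mathrel{\mathscr R}x$; for non-regular $x$ the $\mathscr L$-class can move), but the fact you actually need is immediate: since $\mathscr L$ is a right congruence, $x\mathrel{\mathscr L}e$ gives $xg\mathrel{\mathscr L}eg=g\mathrel{\mathscr L}e$, so right multiplication by $g$ preserves $L_e$, and being a bijection of $\til L_e$ with inverse given by $g^{-1}$ it therefore preserves $\til L_e\setminus L_e$ as well.
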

\begin{proof}
Let $\eta$ be a primitive idempotent of $\Bbbk G_e$ with $V\cong \Bbbk G_e\eta$.  Consider the exact sequence of $\Bbbk M$-$\Bbbk G_e$-bimodules
\[0\longrightarrow \Bbbk [\til L_e\setminus L_e]\longrightarrow \Bbbk \til L_e\longrightarrow \Bbbk L_e\longrightarrow 0.\]  Then since $V$ is a projective $\Bbbk G_e$-module, we obtain an exact sequence of $\Bbbk M$-modules
\[0\longrightarrow \Bbbk [\til L_e\setminus L_e]\otimes_{\Bbbk G_e}V\longrightarrow \Bbbk \til L_e\otimes_{\Bbbk G_e}V\longrightarrow \Bbbk L_e\otimes_{\Bbbk G_e} V\longrightarrow 0.\]  As $\Bbbk L_e\otimes_{\Bbbk G_e} V=\Ind_{G_e}(V)$ is indecomposable with simple top $V^\sharp$ by the discussion in Subsection~\ref{ss:reptheory},  it suffices, by Proposition~\ref{p:rad.quotient}, to show that  $\Bbbk [\til L_e\setminus L_e]\otimes_{\Bbbk G_e}V$ is contained in the radical of $\Bbbk \til L_e\otimes_{\Bbbk G_e} V$.  But if $U$ is an $A$-$\Bbbk G_e$-bimodule, with $A$ a finite dimensional $\Bbbk$-algebra, then one has that $U\otimes_{\Bbbk G_e} V\cong U\eta$ as an $A$-module and so the result follows from Proposition~\ref{p:in.radical} and Lemma~\ref{l:rad.hit}.
\end{proof}

The next proposition shows that if $V$ is a simple $\Bbbk G_e$-module and $V^\sharp$ is the corresponding simple $\Bbbk M$, then both of these modules have isomorphic endomorphism algebras.

\begin{Prop}\label{p:endos}
Let $M$ be a finite monoid and $e\in E(M)$.  
Let $V$ be a simple $\Bbbk G_e$-module.  Then there is an isomorphism $\End_{\Bbbk G_e}(V)\cong \End_{\Bbbk M}(V^\sharp)$ where $V^\sharp =\soc{\Coind_{G_e}(V)}$ is the corresponding simple $\Bbbk M$-module.
\end{Prop}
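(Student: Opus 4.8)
The plan is to exhibit mutually inverse algebra homomorphisms between $\End_{\Bbbk G_e}(V)$ and $\End_{\Bbbk M}(V^\sharp)$, using the machinery of induction and coinduction recalled in Subsection~\ref{ss:reptheory}. Recall from there that $V^\sharp$ is the image of the canonical map $\p_V\colon \Ind_{G_e}(V)\to\Coind_{G_e}(V)$ extending the natural isomorphism $e\Ind_{G_e}(V)\xrightarrow{\ \sim\ } e\Coind_{G_e}(V)$, and that $eV^\sharp\cong V$ as $\Bbbk G_e$-modules (since $eV^\sharp = e\soc{\Coind_{G_e}(V)}$, and one checks $eV^\sharp$ is nonzero because $\p_V$ is nonzero; as $V=eV$ is simple, $eV^\sharp\cong V$). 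So the first step is: the assignment $\theta\mapsto e\theta|_{eV^\sharp}$ defines a map $\Phi\colon\End_{\Bbbk M}(V^\sharp)\to\End_{\Bbbk G_e}(eV^\sharp)=\End_{\Bbbk G_e}(V)$, and this is a ring homomorphism (restriction of the Schur-algebra functor $W\mapsto eW$).

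The heart of the argument is to build the inverse. Given $\alpha\in\End_{\Bbbk G_e}(V)$, I would use the universal properties: $\alpha$ induces $\Ind_{G_e}(\alpha)\colon\Ind_{G_e}(V)\to\Ind_{G_e}(V)$ and $\Coind_{G_e}(\alpha)\colon\Coind_{G_e}(V)\to\Coind_{G_e}(V)$, and these are compatible with $\p_V$ because $\p_V$ is natural in $V$ (it is the unique extension of the natural isomorphism on the $e$-part, and $\alpha$ restricts to $\Ind_{G_e}(\alpha)$ and $\Coind_{G_e}(\alpha)$ on that part); hence $\Coind_{G_e}(\alpha)$ carries $\operatorname{im}\p_V=V^\sharp$ into itself, giving an endomorphism $\Psi(\alpha)\in\End_{\Bbbk M}(V^\sharp)$. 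Both $\Ind_{G_e}(-)$ and $\Coind_{G_e}(-)$ are functorial, so $\Psi$ is a ring homomorphism. Then one checks $\Phi\circ\Psi=\mathrm{id}$ by applying $e(-)$: $e\Psi(\alpha)$ acts on $eV^\sharp\cong e\Coind_{G_e}(V)\cong V$ as $e\Coind_{G_e}(\alpha)$, which is $\alpha$ under the canonical identification $e\Coind_{G_e}(V)\cong V$. For the other composite $\Psi\circ\Phi=\mathrm{id}$, I would use that $\End_{\Bbbk M}(V^\sharp)$ is a division algebra (as $V^\sharp$ is simple) together with $\dim_{\Bbbk}\End_{\Bbbk M}(V^\sharp)=\dim_{\Bbbk}\End_{\Bbbk G_e}(V)$: either invoke a dimension count, or argue directly that $\Phi$ is injective —if $\theta\in\End_{\Bbbk M}(V^\sharp)$ kills $eV^\sharp$, then since $V^\sharp=\Bbbk Me\Coind_{G_e}(V)$ is generated by $eV^\sharp$ as a $\Bbbk M$-module (this is exactly the formula $V^\sharp=\Bbbk Me\Coind_{G_e}(V)$ recalled in the excerpt), $\theta$ kills all of $V^\sharp$ — whence $\Phi$ injective and $\Psi$ surjective, so both are isomorphisms. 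Combining, $\Phi$ and $\Psi$ are mutually inverse ring isomorphisms.

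I expect the main obstacle to be the verification that $\Coind_{G_e}(\alpha)$ preserves $V^\sharp=\operatorname{im}\p_V$, i.e.\ the compatibility square $\p_V\circ\Ind_{G_e}(\alpha)=\Coind_{G_e}(\alpha)\circ\p_V$. This follows from uniqueness in the defining property of $\p_V$: both sides are $\Bbbk M$-module maps $\Ind_{G_e}(V)\to\Coind_{G_e}(V)$, and applying $e(-)$ both restrict to the map on $e\Ind_{G_e}(V)\cong V$ induced by $\alpha$ (using the naturality of the isomorphisms $e\Ind_{G_e}(V)\cong V\cong e\Coind_{G_e}(V)$), so they agree by the uniqueness clause. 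Everything else is routine functoriality and the observation that $eV^\sharp$, being the $e$-part of a simple module with $\p_V\neq 0$, is isomorphic to $V$. An alternative, slightly slicker route avoiding $\p_V$ altogether: identify $\End_{\Bbbk M}(V^\sharp)$ with $\End_{\Bbbk M}(\soc{\Coind_{G_e}(V)})$ and use that, for the injective indecomposable $\Coind_{G_e}(V)$ — no, $\Coind_{G_e}(V)$ need not be injective — so the $\p_V$-based argument above is the cleaner one.
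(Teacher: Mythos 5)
Your proof is correct and takes essentially the same approach as the paper: both show that restriction $\theta\mapsto\theta|_{eV^\sharp}$ is an isomorphism $\End_{\Bbbk M}(V^\sharp)\to\End_{\Bbbk G_e}(V)$, with surjectivity obtained by lifting $\alpha\in\End_{\Bbbk G_e}(V)$ to an endomorphism of $\Coind_{G_e}(V)$ that stabilizes $V^\sharp$. The only differences are in the bookkeeping: the paper gets injectivity from Schur's lemma and surjectivity by citing the isomorphism $\End_{\Bbbk M}(\Coind_{G_e}(V))\cong\End_{\Bbbk G_e}(V)$ of \cite[Proposition~4.6]{repbook} together with invariance of the socle, whereas you use the generation $V^\sharp=\Bbbk M\,eV^\sharp$ for injectivity and the naturality of $\p_V$ (so that $\Coind_{G_e}(\alpha)$ preserves the image of $\p_V$) for the lift; both verifications are sound.
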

\begin{proof}
Recall that $V\cong eV^\sharp$ as a $\Bbbk G_e$-module; we shall work with the latter.  If $\p\colon V^\sharp\to V^\sharp$ is an endomorphism, then $\p|_{eV^\sharp}\colon eV^\sharp\to eV^\sharp$ is a $\Bbbk G_e$-module endomorphism. Thus there is a restriction homomorphism $T\colon \End_{\Bbbk M}(V^\sharp)\to \End_{\Bbbk G_e}(eV^\sharp)$.  Moreover, $T$ is injective because $\End_{\Bbbk M}(V^\sharp)$ is a division algebra by Schur's lemma.  On the other hand, there is an isomorphism $\End_{\Bbbk M}(\Coind_{G_e}(V))\to \End_{\Bbbk G_e}(V)$ obtained by restricting $\p\in \End_{\Bbbk M}(\Coind_{G_e})$ to $e\Coind_{G_e}(V) = e\soc{\Coind_{G_e}(V)}=eV^{\sharp}$ (see~\cite[Proposition~4.6]{repbook}).  Moreover, $\p(V^\sharp)=\p(\soc{\Coind_{G_e}(V)})\subseteq \soc{\Coind_{G_e}(V)}=V^\sharp$ and so we conclude that $T$ is surjective.  This completes the proof.
\end{proof}

It follows from Proposition~\ref{p:endos} that $\Bbbk$ is a splitting field for $M$ if and only if it is for each maximal subgroup of $M$, as was asserted earlier.

We now prove the main theorem of this section. It simultaneously generalizes our previous results for $\mathscr R$-trivial monoids~\cite{DO} and regular monoids~\cite{rrbg}.

\begin{Thm}\label{t:projindec}
Let $M$ be a finite right Fountain monoid and $\Bbbk$ a field whose characteristic does not divide the order of any maximal subgroup of $M$. Let $e_1,\ldots, e_n$ be a complete set of idempotent representatives of the regular $\mathscr J$-classes of $M$.  Then the following are equivalent.
\begin{enumerate}
\item Each sandwich matrix of $J_{e_i}$ is right invertible over $\Bbbk G_{e_i}$, for $i=1,\ldots, n$.
\item Each coinduced module $\Coind_{G_{e_i}}(V)$ with $V$ a simple $\Bbbk G_{e_i}$-module is simple, for $i=1,\ldots, n$.
\item The projective cover of the simple module $V^\sharp$ associated to each simple $\Bbbk G_{e_i}$-module $V$ is $\Bbbk \til L_{e_i}\otimes_{\Bbbk G_{e_i}} V$.
\end{enumerate}
\end{Thm}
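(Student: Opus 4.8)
The plan is to prove the cyclic chain of implications $(1)\Rightarrow(2)\Rightarrow(3)\Rightarrow(1)$, exploiting the machinery already assembled. The equivalence $(1)\Leftrightarrow(2)$ is essentially folklore and was stated in Subsection~\ref{ss:reptheory}: under the assumption that the characteristic of $\Bbbk$ does not divide $|G_{e_i}|$, the sandwich matrix of $J_{e_i}$ is right invertible over $\Bbbk G_{e_i}$ if and only if $\Coind_{G_{e_i}}(V)=V^\sharp$ for every simple $\Bbbk G_{e_i}$-module $V$ (combining \cite[Lemma~5.20]{repbook} with \cite[Corollary~4.22]{repbook}), and the latter is exactly the assertion that $\Coind_{G_{e_i}}(V)$ is simple. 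So for $(1)\Leftrightarrow(2)$ I would simply cite this, handling each $\mathscr J$-class separately.

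For $(2)\Rightarrow(3)$, the strategy is to produce, for each simple $\Bbbk G_{e_i}$-module $V$, a projective $\Bbbk M$-module surjecting onto $\Bbbk\til L_{e_i}\otimes_{\Bbbk G_{e_i}}V$, then argue the latter is itself projective. By Corollary~\ref{C:is.indec}, $\Bbbk\til L_{e_i}\otimes_{\Bbbk G_{e_i}}V$ is indecomposable with simple top $V^\sharp$, so it is at least a quotient of the projective cover $P(V^\sharp)$; it then suffices to show $\dim \Bbbk\til L_{e_i}\otimes_{\Bbbk G_{e_i}}V \ge \dim P(V^\sharp)$, or equivalently, to exhibit $\Bbbk\til L_{e_i}\otimes_{\Bbbk G_{e_i}}V$ as a summand of a projective. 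The natural candidate is the projective module $\Bbbk Me_i\otimes_{\Bbbk G_{e_i}}V$ (or, after choosing a primitive idempotent $\eta$ of $\Bbbk G_{e_i}$ with $V\cong \Bbbk G_{e_i}\eta$, the module $\Bbbk Me_i\eta = (\Bbbk Me_i)\eta$, which is projective since $e_i\eta$ is an idempotent of $\Bbbk M$). The key computation is to identify $\Bbbk\til L_{e_i}$ inside $\Bbbk Me_i$ as a $\Bbbk M$-$\Bbbk G_{e_i}$-bimodule direct summand, using hypothesis $(2)$: the surjectivity statement about $T$ promised at the end of the ``Sandwich matrix'' discussion, together with right invertibility of the sandwich matrix, should give a $\Bbbk M$-bimodule splitting of $\Bbbk Me_i \twoheadrightarrow \Bbbk\til L_{e_i}$ (the quotient by the span of those elements of $Me_i$ that lie below $\til L_{e_i}$ in the $\leq_{\mathscr L}$ order). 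Tensoring with $V$ over $\Bbbk G_{e_i}$ (equivalently, right-multiplying by $\eta$) preserves the splitting, so $\Bbbk\til L_{e_i}\otimes_{\Bbbk G_{e_i}}V$ is a summand of the projective $\Bbbk M$-module $\Bbbk Me_i\otimes_{\Bbbk G_{e_i}}V$, hence projective; being indecomposable with top $V^\sharp$, it is $P(V^\sharp)$.

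For $(3)\Rightarrow(1)$, the plan is contrapositive combined with a dimension/composition-factor count. If some sandwich matrix is not right invertible, then by the cited results $\Coind_{G_{e_i}}(V)$ is not simple for some $V$, equivalently $\varphi_V\colon \Ind_{G_{e_i}}(V)\to\Coind_{G_{e_i}}(V)$ fails to be surjective, which forces $\Ind_{G_{e_i}}(V)=\Bbbk L_{e_i}\otimes_{\Bbbk G_{e_i}}V$ to be non-semisimple and in particular $\dim P(V^\sharp) > \dim\Ind_{G_{e_i}}(V)$ must be compared against $\dim \Bbbk\til L_{e_i}\otimes_{\Bbbk G_{e_i}}V$. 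The cleanest route: if $(3)$ holds, then summing the identity $\Bbbk M\cong\bigoplus_i\bigoplus_{[V]}\frac{\dim V^\sharp}{\dim\End(V^\sharp)}\cdot P(V^\sharp)$ with $P(V^\sharp)=\Bbbk\til L_{e_i}\otimes_{\Bbbk G_{e_i}}V$ and using Proposition~\ref{p:endos} to replace $\End_{\Bbbk M}(V^\sharp)$ by $\End_{\Bbbk G_{e_i}}(V)$, a dimension count on both sides pins down $\dim\Bbbk M=\sum_i |\til L_{e_i}|\cdot(\text{something})$; comparing with the actual decomposition of $\Bbbk M$ via the $\til L$-classes forces each $\varphi_V$ to be an isomorphism onto its target-constrained image, yielding $(2)$. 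I expect the dimension bookkeeping here — keeping track of multiplicities $\frac{\dim V}{\dim\End_{\Bbbk G_{e_i}}(V)}$ across all regular $\mathscr J$-classes and reconciling it with $\dim\Bbbk M$ — to be the main obstacle, and the one place where the right Fountain hypothesis (guaranteeing each $\til L_{e_i}$ has an idempotent and that these partition $M$ appropriately) does real work.
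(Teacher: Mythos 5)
Your step $(1)\Leftrightarrow(2)$ is exactly the paper's (a citation of \cite[Lemma~5.20]{repbook} and \cite[Corollary~4.22]{repbook}), and your overall skeleton (Corollary~\ref{C:is.indec} gives indecomposability with top $V^\sharp$, so everything reduces to projectivity of $\Bbbk \til L_{e_i}\otimes_{\Bbbk G_{e_i}}V$) is correct. But the central step of your $(2)\Rightarrow(3)$ has a genuine gap: you assert that right invertibility of the sandwich matrix at $J_{e_i}$ (surjectivity of $T$) ``should give'' a $\Bbbk M$-bimodule splitting of the quotient map $\Bbbk Me_i\twoheadrightarrow \Bbbk\til L_{e_i}$, whose kernel is $\Bbbk[Me_i\setminus \til L_{e_i}]$. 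That splitting is essentially equivalent to the projectivity you are trying to prove, and it does not follow from the pairing data at $J_{e_i}$: the kernel is supported on strictly lower $\mathscr J$-classes, so the obstruction to splitting lives in extensions between $\Bbbk\til L_{e_i}$ and modules concentrated below $J_{e_i}$, which the sandwich matrix of $J_{e_i}$ alone does not control (note that hypothesis (1) is imposed at \emph{all} regular $\mathscr J$-classes, and no mechanism is offered for how the conditions at the other classes enter your local argument). As it stands this step is an unproved assertion, not a proof.

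The paper avoids any local splitting and argues globally: setting $U=\bigoplus_{e\in E(M)/\mathscr L}\Bbbk\til L_e\cong\bigoplus_i\ell_i\cdot\Bbbk\til L_{e_i}$, hypothesis (2) gives $\dim\Coind_{G_{e_i}}(V)=\ell_i\dim V$, whence $U/\rad(U)\cong \Bbbk M/\rad(\Bbbk M)$; this yields a projective cover $\Bbbk M\to U$, and the right Fountain hypothesis (each $\til{\mathscr L}$-class contains exactly one $\mathscr L$-class of idempotents) gives $\dim U=|M|$, forcing this cover to be an isomorphism, so $U$ and hence every summand $\Bbbk\til L_{e_i}\otimes_{\Bbbk G_{e_i}}V$ is projective. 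If you want to salvage your approach, you would still need an argument of this global type to produce the splitting. Your $(3)\Rightarrow(1)$ sketch is aimed in the right direction but is also not carried out: the ``dimension bookkeeping'' you flag as the main obstacle is precisely the paper's argument, namely the inequality $\dim V^\sharp\leq\ell_i\dim V$, the equality $\dim\End_{\Bbbk M}(V^\sharp)=\dim\End_{\Bbbk G_{e_i}}(V)$ from Proposition~\ref{p:endos}, and $\dim U=|M|=\dim\Bbbk M$, which together force $\dim V^\sharp=\ell_i\dim V$ and hence simplicity of each $\Coind_{G_{e_i}}(V)$; the contrapositive framing you begin with is unnecessary once this count is done.
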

\begin{proof}
First of all, the assumption that the sandwich matrix of $J_{e_i}$ is right invertible is equivalent to the assertion that $V^\sharp=\Coind_{G_{e_i}}(V)$ for each simple $\Bbbk G_{e_i}$-module $V$ by~\cite[Corollary~4.22]{repbook} and~\cite[Lemma~5.20]{repbook}.

Assume first that (2) holds.  Then
 Corollary~\ref{C:is.indec} shows that $\Bbbk \til L_{e_i}\otimes_{\Bbbk G_{e_i}} V$ is an indecomposable module with simple top $\Coind_{G_{e_i}}(V)$.

 Note that  $\Coind_{G_{e_i}}(V)=\Hom_{G_{e_i}}(R_{e_i},V)$ is isomorphic as a vector space to $V^{\ell_i}$ where $\ell_i$ is the number of $\mathscr L$-classes in $J_{e_i}$ because $R_{e_i}$ is a free left $G_{e_i}$-set with basis of cardinality $\ell_i$.  Since $\End_{\Bbbk M}(\Coind_{G_{e_i}}(V))\cong \End_{\Bbbk G_{e_i}}(V)$ (cf.~\cite[Proposition~4.6]{repbook}), it follows that
\[\Bbbk M/\rad(\Bbbk M) = \bigoplus_{i=1}^n\bigoplus_{V\in \mathrm{Irr}_{\Bbbk}(G_{e_i})} \frac{\ell_i\dim V}{\dim \End_{\Bbbk G_{e_i}}(V)}\cdot \Coind_{G_{e_i}}(V).\]

On the other hand, if $e\in E(M)$, then
\begin{equation}\label{eq:help}
\begin{split}
\Bbbk \til L_e&\cong \Bbbk \til L_e\otimes_{\Bbbk G_e}\Bbbk G_e\\ & \cong \Bbbk \til L_e\otimes_{\Bbbk G_e}\left(\bigoplus_{V\in \mathrm{Irr}_{\Bbbk}(G_e)} \frac{\dim V}{\dim \End_{\Bbbk G_e}(V)}\cdot V\right)\\ &\cong \bigoplus_{V\in \mathrm{Irr}_{\Bbbk}(G_e)}\frac{\dim V}{\dim \End_{\Bbbk G_e}(V)}\cdot (\Bbbk \til L_e\otimes_{\Bbbk G_e} V).
\end{split}
\end{equation}
  Also, if $e\mathrel{\mathscr J} f$, then $\Bbbk \til L_e\cong \Bbbk \til L_f$ by Corollary~\ref{c:partial.trans}.  Therefore, if $U=\bigoplus_{e\in E(M)/\mathscr{L}} \Bbbk \til L_e$, then $U\cong \bigoplus_{i=1}^n \ell_i\cdot \Bbbk \til L_{e_i}$ and so
\begin{align*}
U/\rad(U)&\cong \bigoplus_{i=1}^n\ell_i\cdot \left(\bigoplus_{V\in \mathrm{Irr}_{\Bbbk}(G_{e_i})} \frac{\dim V}{\dim \End_{\Bbbk G_{e_i}}(V)}\cdot \Coind_{G_{e_i}}(V)\right)\\ &\cong \Bbbk M/\rad(\Bbbk M).
\end{align*}
  Thus there is a projective cover $\psi\colon \Bbbk M\to U$.  But since $\til{\mathscr L}$ is an equivalence relation and each $\til{\mathscr L}$-class contains a unique $\mathscr L$-class of idempotents, we deduce that $\dim U=|M|$ and so $\psi$ is an isomorphism.  Therefore, $U$ is a projective module and hence each of its direct summands $\Bbbk \til L_e\otimes_{\Bbbk G_e}V$ is a projective module.   This completes the proof that  (2) implies (3).

Next assume that (3) holds.  Again put \[U=\bigoplus_{e\in E(M)/\mathscr{L}} \Bbbk \til L_e\cong \bigoplus_{i=1}^n \ell_i\cdot \Bbbk \til L_{e_i}.\]  Then, as above, we have that $\dim U=|M|$ since $M$ is right Fountain.  Also $U$ is projective by hypothesis.  As $V^\sharp$ is a submodule of $\Coind_{G_{e_i}}(V)$ for a simple $\Bbbk G_{e_i}$-module $V$, we have $\dim V^\sharp\leq \ell_i\cdot \dim V$. It follows from \eqref{eq:help} and Corollary~\ref{C:is.indec} that \[U/\rad(U)\cong \bigoplus_{i=1}^n \bigoplus_{V\in \Irr_{\Bbbk}(G_{e_i})} \ell_i\frac{\dim V}{\dim \End_{\Bbbk G_{e_i}}(V)}\cdot V^\sharp\] and so $U$ contains $\ell_i\dim V/\dim \End_{\Bbbk G_{e_i}}(V)$ copies of the projective cover of $V^\sharp$ in its decomposition into indecomposable modules.  On the  other hand, $\Bbbk M$ has $\dim V^\sharp/\dim \End_{\Bbbk M}(V^\sharp)$ copies of the projective cover of $V^\sharp$ in its decomposition into indecomposable modules.  From the equalities $\dim \End_{\Bbbk M}(V^\sharp)=\dim \End_{\Bbbk G_{e_i}}(V)$ (from Proposition~\ref{p:endos}) and  $\dim \Bbbk M=\dim U$, we conclude that $\dim V^\sharp=\ell_i\cdot \dim V$ for all $i$ and $V$ and hence $\Coind_{G_{e_i}}(V)$ is simple for all $i$ and $V$, establishing (2).
This completes the proof.
\end{proof}

Let us state the dual to Theorem~\ref{t:projindec}.

\begin{Thm}\label{t:inj.indec}
Let $M$ be a finite left Fountain monoid and $\Bbbk$ a field whose characteristic does not divide the order of any maximal subgroup of $M$. Let $e_1,\ldots, e_n$ be a complete set of idempotent representatives of the regular $\mathscr J$-classes of $M$.  Then the following are equivalent.
\begin{enumerate}
\item Each sandwich matrix of $J_{e_i}$ is left invertible over $\Bbbk G_{e_i}$, for $i=1,\ldots, n$.
\item Each induced module $\Ind_{G_{e_i}}(V)$ with $V$ a simple $\Bbbk G_{e_i}$-module is simple, for $i=1,\ldots, n$.
\item The injective envelope of the simple module $V^\sharp$ associated to each simple $\Bbbk G_{e_i}$-module $V$ is $D(D(V)\otimes_{\Bbbk G_{e_i}}\Bbbk \til R_{e_i})\cong \Hom_{G_{e_i}}(\til R_{e_i},V)$.
\end{enumerate}
\end{Thm}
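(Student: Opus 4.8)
The plan is to deduce Theorem~\ref{t:inj.indec} from Theorem~\ref{t:projindec} by passing to the opposite monoid and then applying vector space duality, so that ``projective cover'' becomes ``injective envelope''. Write $N=M^{\mathrm{op}}$; then $\Bbbk N\cong(\Bbbk M)^{\mathrm{op}}$, so finite dimensional left $\Bbbk N$-modules are exactly finite dimensional right $\Bbbk M$-modules. Reversing the multiplication fixes $E(M)$, preserves $\mathscr J$ (hence the regular $\mathscr J$-classes), and interchanges $\mathscr L$ with $\mathscr R$ and $\til{\mathscr L}$ with $\til{\mathscr R}$; thus $L^N_{e_i}=R_{e_i}$ and $\til L^N_{e_i}=\til R_{e_i}$ as sets (with the $N$-action being the $M$-action read on the other side), $G^N_{e_i}=(G_{e_i})^{\mathrm{op}}$, and $M$ is left Fountain if and only if $N$ is right Fountain. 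Since $|G^N_{e_i}|=|G_{e_i}|$, the hypothesis on the characteristic of $\Bbbk$ is inherited, so Theorem~\ref{t:projindec} applies to $N$ with the same idempotents $e_1,\dots,e_n$.

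Next I would record the relevant properties of the duality functor $D=\Hom_\Bbbk(-,\Bbbk)$: it is a contravariant exact equivalence between finite dimensional left $\Bbbk M$-modules and finite dimensional left $\Bbbk N$-modules, it preserves simplicity, and it sends the right projective cover of a simple right $\Bbbk M$-module $T$ to the injective envelope of the simple left $\Bbbk M$-module $D(T)$, as recalled among the generalities on finite dimensional algebras. Two further identifications are needed. First, $D$ matches the two Clifford--Munn--Ponizovskii parametrisations: it carries the simple $\Bbbk M$-module $V^\sharp$ attached to $(e_i,V)$ to the simple $\Bbbk N$-module attached to $(e_i,D(V))$, where $D(V)$, being a right $\Bbbk G_{e_i}$-module, is viewed as a left $\Bbbk G^N_{e_i}$-module; this follows from $D(V^\sharp)e_i\cong D(e_iV^\sharp)\cong D(V)$ together with the description of the simple modules, cf.~\cite{repbook}. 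Second, the tensor--hom adjunction $D(X\otimes_B Y)\cong\Hom_B(Y,D(X))$, applied to the bimodules $\Bbbk L_{e_i}$ and $\Bbbk\til R_{e_i}$ and combined with $L^N_{e_i}=R_{e_i}$, $\til L^N_{e_i}=\til R_{e_i}$ and $\Bbbk G^N_{e_i}\cong(\Bbbk G_{e_i})^{\mathrm{op}}$, yields natural isomorphisms
\[
D\bigl(\Ind_{G_{e_i}}(V)\bigr)\cong\Coind^{N}_{G^N_{e_i}}\bigl(D(V)\bigr)
\qquad\text{and}\qquad
\Bbbk\til L^{N}_{e_i}\otimes_{\Bbbk G^N_{e_i}}D(V)\cong D(V)\otimes_{\Bbbk G_{e_i}}\Bbbk\til R_{e_i},
\]
the first of left $\Bbbk N$-modules, the second of right $\Bbbk M$-modules; applying $D$ to the second and using tensor--hom once more gives $D\bigl(\Bbbk\til L^{N}_{e_i}\otimes_{\Bbbk G^N_{e_i}}D(V)\bigr)\cong D\bigl(D(V)\otimes_{\Bbbk G_{e_i}}\Bbbk\til R_{e_i}\bigr)\cong\Hom_{G_{e_i}}(\til R_{e_i},V)$.

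It then remains to translate the three conditions of Theorem~\ref{t:projindec} applied to $N$ across the duality. For (1): up to the diagonal changes caused by choosing $\mathscr L$- and $\mathscr R$-class representatives (which do not affect one-sided invertibility), the sandwich matrix of $J_{e_i}$ in $N$ is obtained from that in $M$ by transposing and inverting entries, so right invertibility over $\Bbbk G^N_{e_i}\cong(\Bbbk G_{e_i})^{\mathrm{op}}$ corresponds to left invertibility over $\Bbbk G_{e_i}$, which is (1) of Theorem~\ref{t:inj.indec}. For (2): every simple $\Bbbk G^N_{e_i}$-module is $D(V)$ for a unique simple $\Bbbk G_{e_i}$-module $V$, so by the first displayed isomorphism and the exactness of $D$, simplicity of all the $\Coind^{N}_{G^N_{e_i}}(W)$ (with $W$ a simple $\Bbbk G^N_{e_i}$-module) is equivalent to simplicity of all the $\Ind_{G_{e_i}}(V)$, which is (2) of Theorem~\ref{t:inj.indec}. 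For (3): by Theorem~\ref{t:projindec}(3) for $N$, the right projective cover of the simple right $\Bbbk M$-module attached to $(e_i,D(V))$ is $\Bbbk\til L^{N}_{e_i}\otimes_{\Bbbk G^N_{e_i}}D(V)$; applying $D$, and using that this simple right $\Bbbk M$-module dualises to $V^\sharp$ together with the second displayed isomorphism, we conclude that the injective envelope of $V^\sharp$ is $D\bigl(D(V)\otimes_{\Bbbk G_{e_i}}\Bbbk\til R_{e_i}\bigr)\cong\Hom_{G_{e_i}}(\til R_{e_i},V)$, which is (3) of Theorem~\ref{t:inj.indec}.

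I expect the only real work to be bookkeeping: tracking, through both the passage to $M^{\mathrm{op}}$ and the application of $D$, which modules are left and which are right, over which (possibly opposite) group algebra each tensor product and each Hom is formed, and in particular verifying cleanly the transpose-and-invert relation between the two sandwich matrices and its effect on one-sided invertibility. None of this is deep, but it is error-prone, so I would fix the $M$/$M^{\mathrm{op}}$ dictionary once at the outset and refer back to it throughout.
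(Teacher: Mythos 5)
Your argument is correct and is exactly the intended one: the paper states Theorem~\ref{t:inj.indec} simply as ``the dual'' of Theorem~\ref{t:projindec} with no written proof, the implicit justification being precisely the passage to $M^{\mathrm{op}}$ combined with the duality $D$, which you have carried out in detail (including the transpose relation between the sandwich matrices and the matching of the two parametrisations of simples). So you have supplied, correctly, the bookkeeping the paper leaves to the reader.
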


Since the sandwich matrices of block groups can be taken to be identity matrices, Theorem~\ref{t:projindec} and Theorem~\ref{t:inj.indec} apply to describe both the projective and injective indecomposable modules of a block group.  Theorem~\ref{t:projindec} also applies if $M$ is a monoid such that each regular $\mathscr J$-class is an $\mathscr L$-class, or more generally if $M$ contains no two-element right zero semigroup (i.e., the idempotents of $M$ generate an $\mathscr R$-trivial submonoid).  Indeed, if the idempotents of $M$ generate an $\mathscr R$-trivial monoid, then it is well known to specialists that the sandwich matrices can be taken to be block diagonal where the diagonal blocks are rows of ones and such a matrix is evidently right invertible.  Let us provide some details, but in the language of modules rather than that of sandwich matrices.

Recall that $M$ acts on the right of $R_e$ by partial transformations, for each $e\in E(M)$, by restricting the right translation action. The following result can be found in~\cite[Theorem~4.8.3]{qtheor}.

\begin{Prop}\label{p:ER}
Let $M$ be a finite monoid.  Then the idempotents of $M$ generate an $\mathscr R$-trivial monoid if and only if the action of $M$ on $R_e$ is via partial injective mappings for all $e\in E(M)$.
\end{Prop}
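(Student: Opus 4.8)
The plan is to derive both implications from the characterization recalled earlier in the preliminaries: a finite monoid $M$ has its idempotents generating an $\mathscr R$-trivial submonoid exactly when $M$ contains no two-element right zero subsemigroup, equivalently no two distinct $\mathscr R$-equivalent idempotents.

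First I would dispatch the easy direction --- that a partial-injective action forces $M\in\mathbf{ER}$ --- by contraposition. Suppose $M$ has distinct $\mathscr R$-equivalent idempotents $e\neq f$. Then $ef=f$ and $fe=e$ (the standard fact that the idempotents of an $\mathscr R$-class form a right zero semigroup), so $R_e=R_f$ contains both $e$ and $f$, while right multiplication by $e$ carries the distinct elements $e,f\in R_e$ to the common value $e$. Hence $M$ does not act on $R_e$ by partial injective maps.

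For the converse --- $M\in\mathbf{ER}$ implies partial injectivity of the action on each $R_e$ --- suppose $r_1,r_2\in R_e$ and $s\in M$ satisfy $r_1s=r_2s\in R_e$; put $r=r_1s=r_2s$. The idea is to ``undo'' $s$ at the level of $\mathscr R$-classes: since $r\mathrel{\mathscr R} e$ there is $n\in M$ with $rn=e$, and setting $t=sn$ we get $r_1t=r_2t=e$. As $r_i\in R_e\subseteq eM$ we have $er_i=r_i$, so $(tr_i)^2=t(r_it)r_i=ter_i=tr_i$; thus $tr_1,tr_2\in E(M)$, and similarly $(tr_1)(tr_2)=t(r_1t)r_2=tr_2$ and $(tr_2)(tr_1)=tr_1$. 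Therefore $\{tr_1,tr_2\}$ is a right zero subsemigroup of $M$, and since $M\in\mathbf{ER}$, it has no two-element right zero subsemigroup, so $tr_1=tr_2$. Finally $r_1=r_1(tr_1)=r_1(tr_2)=(r_1t)r_2=er_2=r_2$, so the action is injective.

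The only step that is not a mechanical manipulation is the choice of the auxiliary element: composing $s$ with an $n$ that pulls $r$ back to the idempotent $e$ is precisely what turns the elements $tr_i$ into idempotents and manufactures a right zero band, after which the hypothesis $M\in\mathbf{ER}$ does the rest. One could instead argue through the Rees matrix description of the regular $\mathscr J$-class $J_e$ --- membership in $\mathbf{ER}$ says each $\mathscr R$-class of $J_e$ has a unique idempotent, hence the sandwich matrix has a single nonzero entry per row, from which partial injectivity on $R_e$ can be read off --- but the computation above is shorter and avoids the structure theory.
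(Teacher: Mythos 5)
Your proof is correct. Note, however, that the paper gives no argument of its own here: it states the proposition and cites Theorem~4.8.3 of the $q$-theory book \cite{qtheor}, so your computation is a self-contained substitute for a citation rather than a variant of an argument in the text. What your proof actually establishes is the proposition \emph{relative to} the equivalence recalled in the preliminaries, namely that $M\in\mathbf{ER}$ if and only if $M$ contains no two-element right zero subsemigroup (no two distinct $\mathscr R$-equivalent idempotents): in your ``easy'' direction, passing from ``the idempotents do not generate an $\mathscr R$-trivial monoid'' to ``there exist distinct $\mathscr R$-equivalent idempotents'' is precisely the nontrivial half of that cited equivalence, while the converse direction only uses the easy half. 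Granting that stated fact, both halves of your argument are sound: the observation that distinct $\mathscr R$-equivalent idempotents $e\neq f$ satisfy $ef=f$, $fe=e$ and so collide under right multiplication by $e$ on $R_e$; and, in the converse, the choice $t=sn$ with $rn=e$, which makes $r_1t=r_2t=e$, turns $tr_1,tr_2$ into idempotents forming a right zero pair (so $tr_1=tr_2$), and then cancels via $r_i=e r_i=(r_it)r_i=r_i(tr_i)$. This is an efficient argument and is essentially the sandwich-matrix phenomenon in elementary form; your closing aside is also viable, though under the paper's convention ($P\colon B\times A$ with rows indexed by $\mathscr L$-classes and columns by $\mathscr R$-classes) the uniqueness of the idempotent in each $\mathscr R$-class gives one nonzero entry per \emph{column}, not per row.
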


Using Proposition~\ref{p:ER}, we show that Theorem~\ref{t:projindec} applies in this case.

\begin{Prop}\label{p:ER.rightinv}
Let $M$ be a finite monoid whose idempotents generate an $\mathscr R$-trivial monoid and $\Bbbk$ a field whose characteristic does not divide the order of the maximal subgroup $G_e$.  Then $\Coind_{G_e}(V)$ is simple for any simple $\Bbbk G_e$-module $V$.
\end{Prop}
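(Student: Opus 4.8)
The plan is to reduce the statement to the right invertibility of the sandwich matrix $P$ of the regular $\mathscr{J}$-class $J=J_e$ over $\Bbbk G_e$. Indeed, by the equivalence recorded in Subsection~\ref{ss:reptheory} (coming from \cite[Corollary~4.22]{repbook} and \cite[Lemma~5.20]{repbook}), the hypothesis that $\operatorname{char}\Bbbk$ does not divide $|G_e|$ makes right invertibility of $P$ over $\Bbbk G_e$ equivalent to $\Coind_{G_e}(V)$ being simple for every simple $\Bbbk G_e$-module $V$, which is exactly the desired conclusion. (If one prefers to avoid this citation: right invertibility of $P$ is just surjectivity of the right $\Bbbk G_e$-module map $T\colon\Bbbk L_e\to\Hom_{\Bbbk G_e}(\Bbbk R_e,\Bbbk G_e)$, and since $\Bbbk R_e$ is $\Bbbk G_e$-free, the natural map $\p_V$ is obtained from $T$ by applying $-\otimes_{\Bbbk G_e}V$, hence is onto by right exactness; then $\Coind_{G_e}(V)=\operatorname{im}\p_V=V^{\sharp}$ is simple.)

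The one place Proposition~\ref{p:ER} enters is to establish that every $\mathscr{R}$-class of $J$ contains exactly one idempotent. If $f_1\neq f_2$ were $\mathscr{R}$-equivalent idempotents, then $f_1f_2=f_2=f_2f_2$, so right translation by $f_2$ would send the distinct elements $f_1,f_2$ of $R_{f_1}$ to the same place, contradicting that $M$ acts on $R_{f_1}$ by partial injective maps; equivalently, this is the remark from the preliminaries that a monoid in $\mathbf{ER}$ has no two $\mathscr{R}$-equivalent idempotents. Since $J$ is regular, each $\mathscr{R}$-class and each $\mathscr{L}$-class of $J$ contains at least one idempotent, so each $\mathscr{R}$-class of $J$ has a unique idempotent and each $\mathscr{L}$-class of $J$ contains one of these.

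Next I would read off the nonzero pattern of $P$. Index the rows of $P$ by the $\mathscr{L}$-classes $L_b$ of $J$ and the columns by the $\mathscr{R}$-classes $R^{(a)}$, with representatives $\lambda_b\in R_e\cap L_b$ and $\rho_a\in L_e\cap R^{(a)}$ chosen as in the preliminaries. Since $\lambda_b\rho_a$, whenever it lies in $J$, lies in $R_e\cap L_e=G_e$, the standard description of the nonzero pattern of a sandwich matrix says $P(b,a)\ne 0$ exactly when the $\mathscr{H}$-class $L_b\cap R^{(a)}$ contains an idempotent. Combined with the previous paragraph, this means $P(b,a)\ne 0$ if and only if the unique idempotent $f_a$ of $R^{(a)}$ lies in $L_b$. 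Hence each column of $P$ has exactly one nonzero entry, lying in $G_e\subseteq(\Bbbk G_e)^{\times}$, and each row has at least one nonzero entry. Such a matrix over $\Bbbk G_e$ is right invertible: choosing for each row $b$ a column $c(b)$ with $P(b,c(b))\ne0$, the matrix $Q$ with $Q(c(b),b)=P(b,c(b))^{-1}$ and all other entries $0$ satisfies $PQ=I$, because the single nonzero entry of column $c(b)$ occurs in row $b$. This yields right invertibility of $P$ over $\Bbbk G_e$, and the reduction of the first paragraph finishes the proof.

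The routine parts -- freeness of $\Bbbk L_e$ and $\Bbbk R_e$ over $\Bbbk G_e$, the identification of $P$ with the matrix of $T$, and the equivalences governing $\p_V$ -- are all already in the excerpt, so the only real content is extracting ``one idempotent per $\mathscr{R}$-class of $J$'' from Proposition~\ref{p:ER} and pinning down the nonzero pattern of $P$. I expect the latter, i.e. being precise about exactly when a product of two elements of $J$ stays inside $J$ (and then that it must land in $G_e$), to be the step that needs the most care.
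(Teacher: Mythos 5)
Your proposal is correct, but it takes a genuinely different route from the paper. You prove condition (1) of Theorem~\ref{t:projindec} directly: using that a monoid in $\mathbf{ER}$ has no two $\mathscr R$-equivalent idempotents (so each $\mathscr R$-class of the regular $\mathscr J$-class $J_e$ contains exactly one idempotent, while regularity gives at least one idempotent in every $\mathscr R$- and $\mathscr L$-class), together with the Miller--Clifford location theorem for stable semigroups (a product of two elements of $J_e$ stays in $J_e$, necessarily in $R_e\cap L_e=G_e$ here, exactly when the intermediate $\mathscr H$-class contains an idempotent), you get that each column of the sandwich matrix $P$ has a single nonzero entry lying in $G_e$ and each row is nonzero, whence $PQ=I$ for the obvious $Q$; the stated equivalence from Subsection~\ref{ss:reptheory} (right invertibility of $P$ over $\Bbbk G_e$ $\Leftrightarrow$ $\Coind_{G_e}(V)=V^\sharp$ for all simple $V$, under the characteristic hypothesis) then finishes the argument, and your $PQ=I$ verification is sound. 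The paper deliberately avoids this: it remarks in the preliminaries that for $M\in\mathbf{ER}$ the sandwich matrices are right invertible but says ``we do not prove these assertions here,'' and instead argues module-theoretically, reducing (via semisimplicity of $\Bbbk G_e$ and indecomposability of coinduced modules) to showing that $\Coind_{G_e}(\Bbbk G_e)\cong D(\Bbbk R_e)$ is semisimple, which it does by proving $D(\Bbbk R_e)=\Bbbk MeD(\Bbbk R_e)$ through an explicit computation with indicator functions $\delta_r$ using the partial-injective action of $M$ on $R_e$ from Proposition~\ref{p:ER}. Your approach makes the combinatorial structure of $P$ (one idempotent per $\mathscr R$-class) explicit and in fact yields right invertibility over $\mathbb Z$, essentially supplying the proof the paper omitted; its only extra debt is the location theorem, which is standard (it is in the cited references) but not stated in the paper, whereas the paper's proof stays self-contained at the level of socles and the $R_e$-action. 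Either argument legitimately establishes the proposition.
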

\begin{proof}
Since $\Bbbk G_e$ is semisimple, $V$ is a direct summand in $\Bbbk G_e$.  Thus $\Coind_{G_e}(V)$ is a direct summand in $\Coind{G_e}(\Bbbk G_e)$.  Since $\Coind_{G_e}(V)$ is indecomposable, it is simple if and only if it is semisimple.  So it is enough to show that $\Coind_{G_e}(\Bbbk G_e)$ is semisimple.  But this latter module is isomorphic to $D(\Bbbk R_e)\cong \Bbbk^{R_e}$ where the module structure is given by \[mf(r)= \begin{cases} f(rm), & \text{if}\ rm\in R_e\\ 0, & \text{else}\end{cases}\]  for $f\colon R_e\to \Bbbk$, $m\in M$ and $r\in R_e$; see~\cite[Exercise~5.9]{repbook}.  Also, it is known that the socle of $D(\Bbbk R_e)\cong \Coind_{G_e}(\Bbbk G_e)$ is $\Bbbk MeD(\Bbbk R_e)$ (cf.~\cite[Propositions~4.8 and~4.19]{repbook}) so it suffices to show that $D(\Bbbk R_e)=\Bbbk MeD(\Bbbk R_e)$.

Let $\delta_r$ be the indicator mapping of $\{r\}$ for $r\in R$ (and more generally, let $\delta_A$ denote the indicator function of any $A\subseteq R_e$).  We need to show that each $\delta_r\in \Bbbk MeD(\Bbbk R_e)$.  First note that since $M$ acts on $R_e$ by partial injections and $ee=e$, it follows that $re=e$ if and only if $r=e$.  Thus $e\delta_e=\delta_{\{e\}e^{-1}} = \delta_e$ and so $\delta_e\in \Bbbk MeD(\Bbbk R_e)$.  Now if $r\in R_e$, then there exists $y\in M$ with $ry=e$.  Moreover, since $M$ acts on $R_e$ by partial injective mappings, $\{e\}y^{-1}=\{r\}$.  Thus $y\delta_e = \delta_{\{e\}y^{-1}}=\delta_r$.  This completes the proof that $D(\Bbbk R_e)=\Bbbk MeD(\Bbbk R_e)$ and hence the proof of the proposition.
\end{proof}

It follows from Propostition~\ref{p:ER.rightinv} that Theorem~\ref{t:projindec} holds for monoids whose idempotents generate an $\mathscr R$-trivial monoid.

\section{Quivers of some right Fountain monoids}

The following proposition will be used to describe the radical of a projective indecomposable module for  finite monoids satisfying the equivalent conditions of Theorem~\ref{t:projindec}.

\begin{Prop}\label{p:radical.proj.indec}
Let $M$ be a finite right Fountain monoid and $\Bbbk$ a field whose characteristic does not divide the order of the maximal subgroup $G_e$.  If $W$ is a simple $\Bbbk G_e$-module, then $\rad(\Bbbk \til L_e\otimes_{\Bbbk G_e} W)\cong\rad(\Bbbk \til L_e)\otimes_{\Bbbk G_e} W$.
\end{Prop}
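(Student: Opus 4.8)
The plan is to reduce to the two facts we have already isolated: that tensoring a bimodule $U$ with a simple $\Bbbk G_e$-module $W$ is, up to isomorphism, the same as multiplying by a primitive idempotent of $\Bbbk G_e$, and that radicals commute with such idempotent truncations. Concretely, write $W\cong \Bbbk G_e\eta$ for a primitive idempotent $\eta\in \Bbbk G_e$ (available since $\mathrm{char}\,\Bbbk\nmid|G_e|$, so $\Bbbk G_e$ is semisimple and $W$ is projective). Then for any $\Bbbk M$-$\Bbbk G_e$-bimodule $U$ one has a natural $\Bbbk M$-module isomorphism $U\otimes_{\Bbbk G_e} W\cong U\eta$ (this is the observation already used in the proof of Corollary~\ref{C:is.indec}). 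Applying this with $U=\Bbbk\til L_e$ gives $\Bbbk\til L_e\otimes_{\Bbbk G_e} W\cong (\Bbbk\til L_e)\eta$ as $\Bbbk M$-modules.

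Next I would invoke Lemma~\ref{l:rad.hit} with $A=\Bbbk M$, $B=\Bbbk G_e$, the bimodule $\Bbbk\til L_e$, and the idempotent $\eta$: this yields $\rad\big((\Bbbk\til L_e)\eta\big)=\rad(\Bbbk\til L_e)\,\eta$, the radical being computed as $\Bbbk M$-modules. Finally, apply the same bimodule-tensor identity once more, now to the $\Bbbk M$-$\Bbbk G_e$-sub-bimodule $\rad(\Bbbk\til L_e)$ (it is a sub-bimodule because $\rad(\Bbbk\til L_e)=\rad(\Bbbk M)\cdot \Bbbk\til L_e$ is stable under the right $\Bbbk G_e$-action), obtaining $\rad(\Bbbk\til L_e)\,\eta\cong \rad(\Bbbk\til L_e)\otimes_{\Bbbk G_e} W$. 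Chaining the three isomorphisms gives
\[\rad\big(\Bbbk\til L_e\otimes_{\Bbbk G_e} W\big)\cong \rad\big((\Bbbk\til L_e)\eta\big)=\rad(\Bbbk\til L_e)\,\eta\cong \rad(\Bbbk\til L_e)\otimes_{\Bbbk G_e} W,\]
which is the claim. One should check that under the isomorphism $\Bbbk\til L_e\otimes_{\Bbbk G_e}W\cong(\Bbbk\til L_e)\eta$ the submodule $\rad(\Bbbk\til L_e)\otimes_{\Bbbk G_e}W$ corresponds exactly to $\rad(\Bbbk\til L_e)\eta$, which is immediate from naturality of $U\mapsto U\eta$ applied to the inclusion $\rad(\Bbbk\til L_e)\hookrightarrow \Bbbk\til L_e$ together with the exactness of $-\otimes_{\Bbbk G_e}W$ (flatness of the projective module $W$).

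The only mildly delicate point — and thus the main obstacle, though a small one — is that $\rad(\Bbbk\til L_e)$ is genuinely a $\Bbbk M$-$\Bbbk G_e$-sub-bimodule rather than merely a left submodule, so that the identity $\rad(\Bbbk\til L_e)\otimes_{\Bbbk G_e}W\cong\rad(\Bbbk\til L_e)\eta$ makes sense and is natural; this follows because the radical as a left $\Bbbk M$-module equals $\rad(\Bbbk M)\cdot\Bbbk\til L_e$, and left multiplication by $\rad(\Bbbk M)$ commutes with the right $\Bbbk G_e$-action since the two actions commute in the bimodule $\Bbbk\til L_e$. Everything else is a formal consequence of Lemma~\ref{l:rad.hit} and the standard identification of $-\otimes_{\Bbbk G_e}W$ with truncation by a primitive idempotent.
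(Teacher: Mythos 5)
Your proof is correct and is essentially the same argument as the paper's: identify $\Bbbk\til L_e\otimes_{\Bbbk G_e}W$ with $\Bbbk\til L_e\eta$ for a primitive idempotent $\eta$ with $W\cong\Bbbk G_e\eta$, apply Lemma~\ref{l:rad.hit} to get $\rad(\Bbbk\til L_e\eta)=\rad(\Bbbk\til L_e)\eta$, and convert back, noting that $\rad(\Bbbk\til L_e)$ is a sub-bimodule. The paper justifies the sub-bimodule point by observing that the right $G_e$-action is by left-module automorphisms, which preserve the radical; your justification via $\rad(\Bbbk\til L_e)=\rad(\Bbbk M)\Bbbk\til L_e$ is an equivalent formulation.
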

\begin{proof}
Since automorphisms of a module preserve its radical, clearly $\rad(\Bbbk \til L_e)$ is a sub-bimodule.  As $\Bbbk G_e$ is semisimple, $W\cong \Bbbk G_e\eta$ for some primitive idempotent $\eta$. Then $\Bbbk \til L_e\otimes_{\Bbbk G_e} W\cong \Bbbk \til L_e\otimes_{\Bbbk G_e}\Bbbk G_e\eta\cong \Bbbk \til L_e\eta$ and so $\rad(\Bbbk\til L_e\otimes_{\Bbbk G_e} W)\cong \rad(\Bbbk \til L_e\eta)=\rad(\Bbbk \til L_e)\eta\cong \rad(\Bbbk \til L_e)\otimes_{\Bbbk G_e}\Bbbk G_e\eta\cong \rad(\Bbbk \til L_e)\otimes_{\Bbbk G_e} W$, where the equality comes from Lemma~\ref{l:rad.hit}.
\end{proof}

For an idempotent $e\in M$, let $I(f)=\{m\in M\mid f\notin MmM\}$; it is an ideal if non-empty.   If $U$ is a $\Bbbk M$-module, then put $I(f)U$ to be the span of all vectors $mu$ with $m\in I(f)$ and $u\in U$, that is, $I(f)U=\Bbbk I(f)\cdot U$.

\begin{Prop}\label{p:kill.I(f)}
Let $M$ be a right Fountain monoid and $e,f\in E(M)$.  Then we have
\[\frac{\rad(\Bbbk \til L_e)\otimes_{\Bbbk G_e}W}{I(f)\left(\rad(\Bbbk \til L_e)\otimes_{\Bbbk G_e}W\right)}\cong \left(\frac{\rad(\Bbbk \til L_e)}{I(f)\rad(\Bbbk \til L_e)}\right)\otimes_{\Bbbk G_e} W\] for any $\Bbbk G_e$-module $W$.
\end{Prop}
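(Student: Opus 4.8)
The plan is to exhibit the claimed isomorphism as an instance of the right exactness of the functor $-\otimes_{\Bbbk G_e}W$, applied to a short exact sequence of $\Bbbk M$-$\Bbbk G_e$-bimodules. Write $P=\rad(\Bbbk\til L_e)$ and $N=I(f)\rad(\Bbbk\til L_e)=\Bbbk I(f)\cdot P$; we may assume $I(f)\neq\emptyset$ (otherwise $N=0$ and the statement is trivial), so that $I(f)$ is an ideal of $M$ and $\Bbbk I(f)$ is a two-sided ideal of $\Bbbk M$. By Corollary~\ref{c:partial.trans}, $\Bbbk\til L_e$ is a $\Bbbk M$-$\Bbbk G_e$-bimodule, and $P=\rad(\Bbbk M)\cdot\Bbbk\til L_e$ is a sub-bimodule of it (the left $\Bbbk M$-action stabilizes it since $\rad(\Bbbk M)$ is an ideal, and the right $\Bbbk G_e$-action stabilizes it because it commutes with the left action, or simply because radicals are preserved by the automorphisms of $P$ induced by $G_e$). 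The same reasoning shows $N=\Bbbk I(f)\cdot P$ is a sub-bimodule of $P$: the inclusion $\Bbbk M\cdot\Bbbk I(f)\subseteq\Bbbk I(f)$ gives left stability, and right stability follows because the right $\Bbbk G_e$-action commutes with left multiplication by $\Bbbk I(f)$.

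First I would write down the short exact sequence of $\Bbbk M$-$\Bbbk G_e$-bimodules $0\to N\to P\to P/N\to 0$ and apply the right exact functor $-\otimes_{\Bbbk G_e}W$ to obtain an exact sequence of $\Bbbk M$-modules, with first map $\iota$,
\[N\otimes_{\Bbbk G_e}W \to P\otimes_{\Bbbk G_e}W \to (P/N)\otimes_{\Bbbk G_e}W\to 0,\]
so that $(P/N)\otimes_{\Bbbk G_e}W\cong (P\otimes_{\Bbbk G_e}W)/\operatorname{im}\iota$. It then remains only to identify $\operatorname{im}\iota$ with $I(f)(P\otimes_{\Bbbk G_e}W)$. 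Since $N=\Bbbk I(f)\cdot P$ is spanned by elements $mp$ with $m\in I(f)$ and $p\in P$, the subspace $\operatorname{im}\iota$ is spanned by the vectors $(mp)\otimes w=m(p\otimes w)$ with $m\in I(f)$, $p\in P$, $w\in W$; this is exactly $\Bbbk I(f)\cdot(P\otimes_{\Bbbk G_e}W)=I(f)(P\otimes_{\Bbbk G_e}W)$. Substituting back $P=\rad(\Bbbk\til L_e)$ and $N=I(f)\rad(\Bbbk\til L_e)$ gives the asserted isomorphism.

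I do not anticipate a real obstacle: the content is the right exactness of the tensor product together with the elementary identification of $\operatorname{im}\iota$ above, and the only care needed is the bookkeeping showing that $N$ is a sub-bimodule (so that $\iota$ is $\Bbbk M$-linear and the quotient is a genuine $\Bbbk M$-module). An equivalent route is to use the natural isomorphism $(\Bbbk M/\Bbbk I(f))\otimes_{\Bbbk M}U\cong U/I(f)U$ for left $\Bbbk M$-modules $U$ and then appeal to associativity of $(\Bbbk M/\Bbbk I(f))\otimes_{\Bbbk M}\rad(\Bbbk\til L_e)\otimes_{\Bbbk G_e}W$, but the direct argument above is shorter.
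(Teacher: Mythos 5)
Your argument is correct, and it is in substance the same tensor-formalism computation as the paper's, just organized differently: the paper's proof is exactly the ``equivalent route'' you mention in your last sentence, namely rewriting both quotients via the natural isomorphism $U/I(f)U\cong(\Bbbk M/\Bbbk I(f))\otimes_{\Bbbk M}U$ and then invoking associativity of $(\Bbbk M/\Bbbk I(f))\otimes_{\Bbbk M}\rad(\Bbbk\til L_e)\otimes_{\Bbbk G_e}W$, whereas you apply right exactness of $-\otimes_{\Bbbk G_e}W$ to the short exact sequence $0\to N\to P\to P/N\to 0$ of bimodules and identify $\operatorname{im}\iota$ with $I(f)(P\otimes_{\Bbbk G_e}W)$ by hand. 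Both proofs rest on the same preliminary bookkeeping, which you carry out correctly: that $\rad(\Bbbk\til L_e)$ and $I(f)\rad(\Bbbk\til L_e)$ are $\Bbbk M$-$\Bbbk G_e$-sub-bimodules (the paper gets the former from the fact that the $G_e$-action gives $\Bbbk M$-module automorphisms of $\Bbbk\til L_e$, which preserve the radical; your alternative $\rad(\Bbbk\til L_e)=\rad(\Bbbk M)\Bbbk\til L_e$ works equally well since all modules here are finite dimensional, though note the automorphisms in question are of $\Bbbk\til L_e$, not of $P$). Your route has the small advantage of not needing to quote the isomorphism $U/I(f)U\cong(\Bbbk M/\Bbbk I(f))\otimes_{\Bbbk M}U$ as a black box, at the cost of the explicit image computation; the paper's is a one-line associativity chain. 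Either way the identification of the image (or of the quotient) uses only that $N$ is spanned by elements $mp$ with $m\in I(f)$, $p\in\rad(\Bbbk\til L_e)$, which matches the paper's definition $I(f)U=\Bbbk I(f)\cdot U$, so there is no gap.
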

\begin{proof}
First note that $\rad(\Bbbk \til L_e)$ and $I(f)\rad(\Bbbk \til L_e)$ are sub-bimodules of $\Bbbk \til L_e$, the former because the radical is preserved by any module automorphism and the latter by associativity.  The isomorphism then follows from the associativity of tensor product up to isomorphism.  In detail, we have
\begin{align*}
\frac{\rad(\Bbbk \til L_e)\otimes_{\Bbbk G_e}W}{I(f)\left(\rad(\Bbbk \til L_e)\otimes_{\Bbbk G_e}W\right)} &\cong \frac{\Bbbk M}{\Bbbk I(f)}\otimes_{\Bbbk M}\left(\rad(\Bbbk \til L_e)\otimes_{\Bbbk G_e} W\right)\\
&\cong \left(\frac{\Bbbk M}{\Bbbk I(f)}\otimes_{\Bbbk M} \rad(\Bbbk \til L_e)\right)\otimes_{\Bbbk G_e} W\\
&\cong  \left(\frac{\rad(\Bbbk \til L_e)}{I(f)\rad(\Bbbk \til L_e)}\right)\otimes_{\Bbbk G_e} W
\end{align*}
as required.
\end{proof}

We now provide a reduction for computing the quiver of $\Bbbk M$ when $M$ is a right Fountain monoid and $\Bbbk$ is a splitting field for each maximal subgroup of $M$ whose characteristic divides the order of no maximal subgroup of $M$ provided that the equivalent conditions of Theorem~\ref{t:projindec} hold.  If $e,f\in M$, then $f\rad(\Bbbk \til L_e)/fI(f)\rad(\Bbbk \til L_e)$ is a $\Bbbk G_f$-$\Bbbk G_e$-bimodule and hence a left $\Bbbk [G_f\times G_e]$-module in the natural way.  

\begin{Thm}\label{t:reduction}
Let $M$ be a finite right Fountain monoid and $\Bbbk$ a field. Suppose that:
\begin{enumerate}
\item  the characteristic of $\Bbbk$ divides the order of no maximal subgroup of $M$;
\item $\Bbbk$ is a splitting field for each maximal subgroup of $M$; and
\item  the equivalent conditions of Theorem~\ref{t:projindec} hold.
\end{enumerate}
Let $e,f\in E(M)$ and let $W$ be a simple $\Bbbk G_f$-module and $V$ a simple $\Bbbk G_e$-module.
Then the number of arrows in the quiver of $\Bbbk M$ from the isomorphism class of $\Coind_{G_e}(V)$ to the isomorphism class of $\Coind_{G_f}(W)$  is the multiplicity of $W\otimes D(V)$ as an irreducible constituent in the $\Bbbk [G_f\times G_e]$-module  $f\rad(\Bbbk \til L_e)/fI(f)\rad(\Bbbk \til L_e)$.
\end{Thm}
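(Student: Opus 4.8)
The plan is to compute the number of arrows as $\dim_{\Bbbk}\Ext^1_{\Bbbk M}(\Coind_{G_e}(V),\Coind_{G_f}(W))$ and to push this quantity down to the group algebras $\Bbbk G_f$ and $\Bbbk G_e$ in three moves: a reduction to a $\Hom$-space over the projective cover, a reduction modulo $\Bbbk I(f)$, and a translation into finite-group representation theory via a Schützenberger-type adjunction. First, both modules are simple by condition~(2) of Theorem~\ref{t:projindec}, and $\Bbbk M$ is split by hypotheses~(1)--(2) and the remark after Proposition~\ref{p:endos}, so the quiver is defined and has $\dim_\Bbbk\Ext^1$ arrows between the relevant vertices. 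Under our hypotheses, Corollary~\ref{C:is.indec} together with the proof of Theorem~\ref{t:projindec} (where $\bigoplus_{e\in E(M)/\mathscr L}\Bbbk\til L_e\cong\Bbbk M$ is shown, so each $\Bbbk\til L_e\otimes_{\Bbbk G_e}V$ is a projective direct summand) shows that $P_V:=\Bbbk\til L_e\otimes_{\Bbbk G_e}V$ is the projective cover of $\Coind_{G_e}(V)$. Using $\Ext^1_{\Bbbk M}(S,S')\cong\Hom_{\Bbbk M}(\rad(P_V),S')$ and Proposition~\ref{p:radical.proj.indec} (which gives $\rad(P_V)\cong\rad(\Bbbk\til L_e)\otimes_{\Bbbk G_e}V$), the number of arrows becomes $\dim_\Bbbk\Hom_{\Bbbk M}\bigl(\rad(\Bbbk\til L_e)\otimes_{\Bbbk G_e}V,\ \Coind_{G_f}(W)\bigr)$.

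Next I would localize at the $\mathscr J$-class $J_f$. If $m\in I(f)$ and $r\in R_f$ then $rm\notin R_f$ (else $f\leq_{\mathscr J}rm\leq_{\mathscr J}m$, contradicting $m\in I(f)$), so $\Bbbk I(f)$ annihilates $\Coind_{G_f}(W)=\Hom_{\Bbbk G_f}(\Bbbk R_f,W)$; hence the above $\Hom$-space may be taken over $B:=\Bbbk M/\Bbbk I(f)$ after replacing the source module by its quotient modulo $I(f)$, which by Proposition~\ref{p:kill.I(f)} is $\bigl(\rad(\Bbbk\til L_e)/I(f)\rad(\Bbbk\til L_e)\bigr)\otimes_{\Bbbk G_e}V$. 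Writing $\overline f$ for the image of $f$ in $B$, I would use two standard facts from the structure theory of a regular $\mathscr J$-class: $\overline f B\overline f\cong\Bbbk G_f$ as algebras (the surviving basis elements of $f\Bbbk Mf$ are the $\overline g$ with $g\in G_f=J_f\cap fMf$), and $\overline f B\cong\Bbbk R_f$ as $\Bbbk G_f$-$B$-bimodules (the image of $fM$ in $B$ has basis $R_f$, with the partial right-translation action of $M$). In particular $\Coind_{G_f}(W)\cong\Hom_{\overline f B\overline f}(\overline f B,W)$ as a $B$-module, so the adjunction between $\overline f B\otimes_B(-)=\overline f(-)$ and $\Hom_{\overline f B\overline f}(\overline f B,-)$ rewrites our $\Hom$-space as $\dim_\Bbbk\Hom_{\Bbbk G_f}(Y\otimes_{\Bbbk G_e}V,\ W)$, where $Y:=f\rad(\Bbbk\til L_e)/fI(f)\rad(\Bbbk\til L_e)$ is the $\Bbbk[G_f\times G_e]$-module of the statement; here one checks that $\overline f\bigl((\rad(\Bbbk\til L_e)/I(f)\rad(\Bbbk\til L_e))\otimes_{\Bbbk G_e}V\bigr)=Y\otimes_{\Bbbk G_e}V$, using $f\rad(\Bbbk\til L_e)\cap I(f)\rad(\Bbbk\til L_e)=fI(f)\rad(\Bbbk\til L_e)$.

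The last move is a computation with semisimple group algebras. As $\mathrm{char}\,\Bbbk$ divides $|G_f\times G_e|$ not at all and $\Bbbk$ splits both $G_f$ and $G_e$, the algebra $\Bbbk[G_f\times G_e]$ is split semisimple and $Y$, as a $\Bbbk G_f$-$\Bbbk G_e$-bimodule, is a direct sum of copies of the simple bimodules $A\otimes_\Bbbk D(V')$ with $A$ a simple $\Bbbk G_f$-module and $V'$ a simple $\Bbbk G_e$-module, the multiplicity of $A\otimes_\Bbbk D(V')$ being exactly the multiplicity of $A\otimes D(V')$ as a $\Bbbk[G_f\times G_e]$-constituent of $Y$. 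Applying $-\otimes_{\Bbbk G_e}V$ and using $D(V')\otimes_{\Bbbk G_e}V\cong\Bbbk$ if $V'\cong V$ and $0$ otherwise, we get $Y\otimes_{\Bbbk G_e}V\cong\bigoplus_A A^{\,m_A}$ as a $\Bbbk G_f$-module, where $m_A$ is the multiplicity of $A\otimes D(V)$ in $Y$; since $\Bbbk$ splits $G_f$, $\dim_\Bbbk\Hom_{\Bbbk G_f}(Y\otimes_{\Bbbk G_e}V,W)=m_W$, the multiplicity of $W\otimes D(V)$ in $Y$. Chaining the three identifications yields the theorem.

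I expect the crux to lie in the second move: pinning down the bimodule isomorphisms $\overline f B\overline f\cong\Bbbk G_f$ and $\overline f B\cong\Bbbk R_f$ precisely enough that the idempotent adjunction genuinely rewrites $\Hom_B(-,\Coind_{G_f}(W))$ as $\Hom_{\Bbbk G_f}(\overline f(-),W)$, and then verifying that $\overline f$ applied to $(\rad(\Bbbk\til L_e)/I(f)\rad(\Bbbk\til L_e))\otimes_{\Bbbk G_e}V$ is $Y\otimes_{\Bbbk G_e}V$ on the nose — this is where all the Green's-relations bookkeeping (stability, $G_f=J_f\cap fMf$, the $\mathscr R$-classes of $J_f$ as a basis of $\Bbbk R_f$) is concentrated. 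The first and third moves are, respectively, a direct appeal to results already proved in the paper and a routine exercise in semisimple representation theory of finite groups.
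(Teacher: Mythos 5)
Your proposal is correct and follows essentially the same route as the paper: projective cover from Theorem~\ref{t:projindec}, the identification $\Ext^1\cong\Hom(\rad(P),-)$ together with Propositions~\ref{p:radical.proj.indec} and~\ref{p:kill.I(f)}, then the adjunction sending $\Hom_{\Bbbk M}(-,\Coind_{G_f}(W))$ to $\Hom_{\Bbbk G_f}(f(-),W)$, and finally the split semisimple $\Bbbk[G_f\times G_e]$ multiplicity count. Your explicit verification via $\overline{f}B\overline{f}\cong\Bbbk G_f$, $\overline{f}B\cong\Bbbk R_f$ and the intersection $f\rad(\Bbbk\til L_e)\cap I(f)\rad(\Bbbk\til L_e)=fI(f)\rad(\Bbbk\til L_e)$ merely spells out what the paper invokes as ``coinduction is right adjoint to restriction,'' and your simple-bimodule decomposition replaces the paper's primitive-idempotent sandwiching, but these are only cosmetic variations on the same argument.
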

\begin{proof}
By Theorem~\ref{t:projindec}, we have that $\Bbbk \til L_e\otimes_{\Bbbk G_e} V\to \Coind_{G_e}(V)$ is a projective cover.  Therefore, we have that
\begin{equation}\label{eq:iso.red}
\begin{split}
\Ext^1_{\Bbbk M}(\Coind_{G_e}(V),\Coind_{G_f}(W))&\cong \Hom_{\Bbbk M}(\rad(\Bbbk \til L_e\otimes_{\Bbbk G_e}V),\Coind_{G_f}(W))\\ &\cong \Hom_{\Bbbk M}(\rad(\Bbbk \til L_e)\otimes_{\Bbbk G_e} V,\Coind_{G_f}(W))
\end{split}
\end{equation}
 where first isomorphism uses the long exact sequence for $\Ext$ and the second uses Proposition~\ref{p:radical.proj.indec}.  Since $\Coind_{G_f}(W)$ is a $\Bbbk M/\Bbbk I(f)$-module, we then have, in light of Proposition~\ref{p:kill.I(f)}, that the right hand side of \eqref{eq:iso.red} is isomorphic to
 \begin{equation}\label{eq:iso.red.two}
 \begin{split}
 \Hom_{\Bbbk M/\Bbbk I(f)}\left(\frac{\rad(\Bbbk \til L_e)\otimes_{\Bbbk G_e} V}{I(f)\left(\rad(\Bbbk\til L_e)\otimes_{\Bbbk G_e} V\right)},\Coind_{G_f}(W)\right)\cong\\
 \qquad \Hom_{\Bbbk M/\Bbbk I(f)}\left(\left(\frac{\rad(\Bbbk \til L_e)}{I(f)\rad(\Bbbk \til L_e)}\right)\otimes_{\Bbbk G_e} V,\Coind_{G_f}(W)\right).
 \end{split}
 \end{equation}
Let $\eta_W$ be a primitive idempotent with $\Bbbk G_f\eta_W\cong W$ and $\eta_V$ a primitive idempotent with $\Bbbk G_e\eta_V\cong V$.  Applying that coinduction is right adjoint to restriction yields that the right hand side of \eqref{eq:iso.red.two} is isomorphic to
\begin{equation}
\Hom_{\Bbbk G_f}\left(\left(\frac{f\rad(\Bbbk \til L_e)}{fI(f)\rad(\Bbbk \til L_e)}\right)\otimes_{\Bbbk G_e} V,W\right)\cong D\left(\eta_W\left[\frac{f\rad(\Bbbk \til L_e)}{fI(f)\rad(\Bbbk \til L_e)}\right]\eta_V\right).
\end{equation}
Since $\Bbbk$ is a splitting field for $G_f$ and $G_e$, we have that $\eta_W\otimes \eta_V$ is the primitive idempotent of $\Bbbk G_f\otimes \Bbbk G_e\cong \Bbbk [G_f\times G_e]$ corresponding to $W\otimes D(V)$.

This completes the proof that $\dim \Ext_{\Bbbk M}^1(\Coind_{G_e}(V),\Coind_{G_f}(W))$ is the multiplicity of $W\otimes D(V)$ as an irreducible constituent of the $\Bbbk [G_f\times G_e]$-module $f\rad(\Bbbk \til L_e)/fI(f)\rad(\Bbbk \til L_e)$.
\end{proof}

\subsection{Quivers of monoids whose idempotents generate an $\mathscr R$-trivial monoid}
In order to compute the quiver of $\Bbbk M$, for a finite monoid $M\in \mathbf{ER}$ over a field $\Bbbk$ whose characteristic does not divide the order of any maximal subgroup of $M$ and which is a splitting field for each maximal subgroup, we need to compute the kernel of the natural homomorphism $\p_{\Bbbk G_e}\colon \Ind_{G_e}(\Bbbk G_e)\to \Coind_{G_e}(\Bbbk G_e)$. Recall that $\Ind_{G_e}(\Bbbk G_e)=\Bbbk L_e$.    Since $\Coind_{G_e}(\Bbbk G_e)\cong D(\Bbbk R_e)$ is semisimple by Proposition~\ref{p:ER.rightinv}, $\ker \p_{\Bbbk G_e}=\rad(\Bbbk L_e)$ and $\p_{\Bbbk G_e}$ is surjective.

\begin{Prop}\label{p:ER.rad.L}
Let $M$ be a finite monoid whose idempotents generate an $\mathscr R$-trivial monoid, $e\in E(M)$ and $\Bbbk$ a field whose characteristic does not divide the order of $G_e$.  Then $\rad(\Bbbk L_e)$ is spanned by all differences $x-y$ with $x,y\in L_e$ such that $x$ and $y$ act as the same partial injective mapping on the right of $R_e$.
\end{Prop}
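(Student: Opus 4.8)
The plan is to identify $\rad(\Bbbk L_e)$ with the kernel of the natural map $\p_{\Bbbk G_e}\colon \Bbbk L_e=\Ind_{G_e}(\Bbbk G_e)\to \Coind_{G_e}(\Bbbk G_e)\cong \Hom_{\Bbbk G_e}(\Bbbk R_e,\Bbbk G_e)$. As noted in the paragraph preceding the statement, under the present hypotheses $\Coind_{G_e}(\Bbbk G_e)$ is semisimple, so $\p_{\Bbbk G_e}$ is surjective and $\ker\p_{\Bbbk G_e}=\rad(\Bbbk L_e)$. The first step is to make $\p_{\Bbbk G_e}$ explicit: it is the map sending $x\in L_e$ to the homomorphism $r\mapsto rx$ when $rx\in G_e$ and $r\mapsto 0$ otherwise (the sandwich-matrix formula; one checks this is a $\Bbbk M$-module map restricting to the natural isomorphism on $e$-parts, hence equals $\p_{\Bbbk G_e}$ by uniqueness). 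Next I would record the elementary fact that for $x\in L_e$ and $r\in R_e$ one has $rx\in R_e$ if and only if $rx\in G_e$: both say $rx\mathrel{\mathscr R}e$, which by stability forces $rx\mathrel{\mathscr L}e$ since $rx\leq_{\mathscr L}x\mathrel{\mathscr L}e$. Consequently $\p_{\Bbbk G_e}(x)$ is precisely the linearization of the partial injection $\widehat x$ that $x$ induces on $R_e$ (partial injectivity is Proposition~\ref{p:ER}, as the idempotents of $M$ generate an $\mathscr R$-trivial monoid), and therefore $x,y\in L_e$ ``act as the same partial injective mapping on $R_e$'' exactly when $\p_{\Bbbk G_e}(x)=\p_{\Bbbk G_e}(y)$.

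It follows at once that the span $N$ of the differences $x-y$ with $\widehat x=\widehat y$ is contained in $\ker\p_{\Bbbk G_e}=\rad(\Bbbk L_e)$, so it remains to match dimensions. Partitioning $L_e$ into the fibers of $x\mapsto\widehat x$ gives $\dim N=|L_e|-c$, where $c$ is the number of distinct partial maps arising as some $\widehat x$; and surjectivity of $\p_{\Bbbk G_e}$ gives $\dim\rad(\Bbbk L_e)=|L_e|-\dim\Hom_{\Bbbk G_e}(\Bbbk R_e,\Bbbk G_e)=|L_e|-|R_e|$. Hence everything reduces to the identity $c=|R_e|$.

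To prove $c=|R_e|$ I would analyze which partial injections can occur as $\widehat x$. Since $\p_{\Bbbk G_e}(x)$ is $G_e$-equivariant for the free left $G_e$-action on $R_e$, the domain of $\widehat x$ is a union of $G_e$-orbits; since $\widehat x$ takes values in $G_e$, which is a single $G_e$-orbit, injectivity forces the domain to be at most one orbit; and since $x\mathrel{\mathscr L}e$ yields $u\in M$ with $ux=e$, the element $eu$ lies in $R_e$ and satisfies $(eu)x=e$, so $e$ always lies in the image of $\widehat x$ and the domain is exactly one orbit $G_e r_i$, where $r_1,\dots,r_\ell$ are orbit representatives and $\ell$ is the number of $\mathscr L$-classes of $J_e$. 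Thus $\widehat x=\epsilon_i\cdot\gamma$, where $\epsilon_i\colon\Bbbk R_e\to\Bbbk G_e$ is the $G_e$-equivariant map with $\epsilon_i(r_i)=e$ and $\epsilon_i(r_j)=0$ for $j\neq i$, and $\gamma=\widehat x(r_i)\in G_e$. In particular $\{\widehat x:x\in L_e\}$ is contained in $\{\epsilon_i\gamma:1\le i\le\ell,\ \gamma\in G_e\}$, which is a $\Bbbk$-basis of $\Hom_{\Bbbk G_e}(\Bbbk R_e,\Bbbk G_e)$ of cardinality $\ell|G_e|=|R_e|$. But $\{\widehat x:x\in L_e\}$ also spans $\Hom_{\Bbbk G_e}(\Bbbk R_e,\Bbbk G_e)$, since $L_e$ spans $\Bbbk L_e$ and $\p_{\Bbbk G_e}$ is onto; a spanning subset of a basis is the whole basis, so $c=|R_e|$, completing the argument.

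The crux, and the only step beyond bookkeeping, is the structural analysis in the last paragraph: using $M\in\mathbf{ER}$ together with the $G_e$-equivariance of $\p_{\Bbbk G_e}(x)$ and the fact that $e$ is always hit by $\widehat x$, one sees that each $\widehat x$ is one of the ``standard'' equivariant partial injections $\epsilon_i\gamma$, so that the $|L_e|$ maps $\widehat x$ realize all $|R_e|$ of them. The identification of $\p_{\Bbbk G_e}$ and the dimension count are routine.
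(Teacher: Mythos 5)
Your proof is correct, and while it shares the paper's overall frame---identify $\rad(\Bbbk L_e)$ with $\ker\p_{\Bbbk G_e}$, observe that differences of elements acting alike lie in the kernel, and reduce everything to showing that the number of distinct partial injections realized by elements of $L_e$ equals $|R_e|$---you settle that count by a genuinely different mechanism. The paper constructs an explicit bijection $\psi\colon T\to R_e$ (sending a class representative $x$ to the unique $r_x\in R_e$ with $r_xx=e$) and checks surjectivity and, more delicately, injectivity by a hands-on computation with the partial injectivity of the right action of $M$ on $R_e$ (the $r'xr$ manipulation). You instead exploit the free left $G_e$-action on $R_e$: the $G_e$-equivariance of $\p_{\Bbbk G_e}(x)$ together with Proposition~\ref{p:ER} forces the domain of $\widehat x$ to be exactly one orbit, so each $\widehat x$ is one of the $\ell|G_e|=|R_e|$ ``monomial'' basis elements $\epsilon_i\gamma$ of $\Hom_{\Bbbk G_e}(\Bbbk R_e,\Bbbk G_e)$, and then surjectivity of $\p_{\Bbbk G_e}$ plus the fact that a spanning subset of a basis must be the whole basis gives the count in one stroke. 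Your route trades the paper's combinatorial verification for a linear-algebra/equivariance argument, and in passing shows that every $\epsilon_i\gamma$ is realized by some $x\in L_e$ (equivalently, that the paper's $\psi$ is onto); the paper's route stays entirely inside the semigroup and yields the explicit parametrization of the equivalence classes by elements of $R_e$ without invoking the module $\Hom_{\Bbbk G_e}(\Bbbk R_e,\Bbbk G_e)$. Both arguments rest on the same inputs, namely Proposition~\ref{p:ER} and the semisimplicity of $\Coind_{G_e}(\Bbbk G_e)$ from Proposition~\ref{p:ER.rightinv}, and your appeal to the explicit sandwich-matrix formula for $\p_{\Bbbk G_e}$ is exactly what the paper itself asserts before the proof, so there is no gap there.
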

\begin{proof}
Recall that if $x\in L_e$ and $r\in R_e$, then
\[(\p_{\Bbbk G_e}(x))(r)= \begin{cases}rx, & \text{if}\ rx\in R_e\\ 0, & \text{else}\end{cases}\] and so if $x,y\in L_e$ act as the same partial injection on the right of $R_e$, then $x-y\in \ker \p_{\Bbbk G_e}=\rad(\Bbbk L_e)$.  (Note that if $r\in R_e$, $x\in L_e$, then $rx\in R_e$ if and only if $rx\in G_e$ by stability.)

Since $\p_{\Bbbk G_e}$ is surjective by semisimplicty of $\Coind_{G_e}(\Bbbk G_e)\cong D(\Bbbk R_e)$ (see Proposition~\ref{p:ER.rightinv}), it follows that $\ker \p_{\Bbbk G_e}$ has dimension $|L_e|-|R_e|$. Let $T$ be a complete set of equivalence class representatives for the equivalence relation $\sim$ on $L_e$ given by $x\sim y$ if they act the same on the right of $R_e$ by partial injections.  If $x\in L_e$, let $\ov x\in T$ be the representative of the $\sim$-class of $x$.  Then the span of the differences $x-y$ with $x\sim y$ has basis the non-zero elements of the form $x-\ov x$.  There are exactly $|L_e|-|T|$ such elements.  So to complete the proof of the proposition, it suffices to show that $|T|=|R_e|$.

We claim that if $x\in T$, there is a unique element $r_x\in R_e$ such that $r_xx=e$.  Indeed, since $x\in L_e$, we have $mx=e$ for some $m\in M$.  Then $emx=ee=e$ and so $r_x=em\in R_e $.  Uniqueness follows because $x$ acts as a partial injection on the right of $R_e$.

We thus have a mapping $\psi\colon T\to R_e$ given by $\psi(x)= r_x$.  We claim that $\psi$ is a bijection.  If $r\in R_e$, then $ra=e$ for some $a\in M$ and so $rae=ee=e$, whence $ae\in L_e$.  If $x=\ov{ae}$, then $rx=rae=e$ and so $r=\psi(x)$.  Thus $\psi$ is onto.

Now suppose that $\psi(x)=r=\psi(y)$.  Then $rx=e=ry$. As $r\in R_e$ and $x\in L_e$, we deduce that $rxr=er=r$ and $xrx=xe=x$.  Similarly, $ryr=r$ and $yry=y$.   We claim that if $r'\in R_e$, then $r'x\in R_e$ if and only if $r'y\in R_e$, and if $r'x,r'y\in R_e$, then $r'x=r'y$.  Indeed, if $r'x\in R_e$, then $r'xrx=r'x$ and so $r'xr\in R_e$. But $r'(xr)=r'xr(xr)$ and hence, since $xr$ acts on the right of $R_e$ as a partial injection, we have $r'=r'xr$.  Thus $r'yr=r'xryr=r'xr=r'$.  Therefore, $r'y\in R_e$.  Also, since $r'xr=r'yr$ and $r$ acts on the right of $R_e$ as a partial injection, we must have $r'x=r'y$.  Similarly, if $r'y\in R_e$, then $r'x\in R_e$ and $r'x=r'y$.  Thus $x\sim y$ and hence $x=y$, as $x,y\in T$.  Therefore,  $\psi$ is a bijection.  This completes the proof.
\end{proof}

Our goal now is to give an explicit description of the $\Bbbk [G_f\times G_e]$-module $f\rad(\Bbbk \til L_e)/fI(f)\rad(\Bbbk \til L_e)$ from Theorem~\ref{t:reduction} when $M\in \mathbf{ER}$.  So for the remainder of this subsection, $M$ will denote a finite monoid whose idempotents generate an $\mathscr R$-trivial monoid, $\Bbbk$ will be a field whose characteristic divides the order of no maximal subgroup of $M$ and which is a splitting field for all maximal subgroups and $e,f\in E(M)$.  Note that $Me\setminus L_e=I(e)e$ and that $\til L_e\setminus L_e=I(e)e\cap \til L_e$ by stability.

Let $\sim$ be the congruence on $M$ defined by $m\sim n$ if they act on the right of $R_e$ as the same partial injection.
Let $\equiv$ be the least equivalence relation on $fMe$ such that:
\begin{enumerate}
  \item $x\equiv y$ if $x,y\in fI(f)I(e)e$
  \item $x\equiv y$ if $x=zu$ and $y=zv$ with $z\in fI(f)$ and $u,v\in L_e$ with $u\sim v$.
\end{enumerate}

\begin{Prop}\label{p:control.eq}
The equivalence relation $\equiv$ on  $fMe$ enjoys the following properties.
\begin{enumerate}
  \item ${\equiv}\subseteq{\sim}$.
  \item If $x\equiv y$, then $x\in L_e$ if and only if $y\in L_e$.
\end{enumerate}
\end{Prop}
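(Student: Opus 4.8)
The plan is to use that $\equiv$ is by construction the \emph{least} equivalence relation on $fMe$ containing the two listed generating relations; consequently, to prove a containment of the form ``$\equiv$ is contained in $\mathcal E$'' it suffices to check that $\mathcal E$ is itself an equivalence relation on $fMe$ that absorbs both generators. For part~(1) I will take $\mathcal E$ to be the restriction of $\sim$ to $fMe$, which is an equivalence relation because $\sim$ is a congruence on $M$ (for our purposes it is enough that it be a left congruence, which follows from the ``associativity'' of the partial right action of $M$ on $R_e$). For part~(2) I will take $\mathcal E$ to be the relation $\approx$ on $fMe$ defined by $x\approx y$ iff $x\in L_e\Leftrightarrow y\in L_e$, which is manifestly an equivalence relation.

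The generator of type~(1) is easy to handle for both targets. If $x\in fI(f)I(e)e$ then, $I(e)$ being an ideal, $x\in I(e)e\subseteq I(e)$, so $e\notin MxM$. Hence $x\notin L_e$ (otherwise $x\mathrel{\mathscr J}e$ would force $e\in MxM$), which already shows that any type~(1) pair lies in $\approx$; and for every $r\in R_e$ we cannot have $rx\in R_e$ (else $rx\mathrel{\mathscr J}e$ and again $e\in MxM$), so $x$ induces the empty partial map on the right of $R_e$. Therefore any two elements of $fI(f)I(e)e$ are $\sim$-related.

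For the generator of type~(2), say $x=zu$ and $y=zv$ with $z\in fI(f)$ and $u,v\in L_e$, $u\sim v$: containment in $\sim$ is immediate, since $\sim$ is a left congruence and hence $u\sim v$ gives $zu\sim zv$. Containment in $\approx$ is the substantive point, and I plan to establish it through the following characterization, valid for all $z\in M$ and $u\in L_e$: one has $zu\in L_e$ if and only if there is $r\in R_e$ with $rz\in R_e$ and $rzu=e$. For the ``only if'' direction, $zu\in L_e$ means $Mzu=Me$, so $wzu=e$ for some $w\in M$; then $r:=ew$ satisfies $rzu=e$, and one checks directly from $e\in rM$ and $r\in eM$ that $r\mathrel{\mathscr R}e$, and likewise from $e\in rzM$ and $rz\in rM=eM$ that $rz\mathrel{\mathscr R}e$. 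The ``if'' direction is trivial, since $rzu=e$ puts $e$ into $Mzu$. Granting this, fix $r$ as in the characterization for $zu\in L_e$ and put $s:=rz\in R_e$; then $su=e$, that is, $s\cdot u=e$ for the partial right action on $R_e$. Because $u$ and $v$ induce the same partial map on $R_e$, we get $s\cdot v=e$, i.e.\ $rzv=e$ with $r\in R_e$, whence $zv\in L_e$; by symmetry the converse holds, so $zu\in L_e\Leftrightarrow zv\in L_e$, as required.

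Assembling, $\sim|_{fMe}$ and $\approx$ are equivalence relations on $fMe$ containing both generators of $\equiv$, so $\equiv\subseteq\sim$ and $\equiv\subseteq\approx$, which are exactly assertions~(1) and~(2). The one step requiring genuine care is the characterization of when $zu\in L_e$ in terms of the right action on $R_e$: one must use that the partial action is ``associative'' enough — if $(rz)u\in R_e$ then already $rz\in R_e$, by the $\mathscr R$-order — so that $rzu=e$ really does say that $rz\in R_e$ is carried to $e$ by $u$; the remaining manipulations are routine Green's-relations bookkeeping.
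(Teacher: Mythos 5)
Your argument is correct. Part (1) is handled exactly as in the paper: you check that $\sim$ (restricted to $fMe$) absorbs both generating relations, using that elements of $fI(f)I(e)e\subseteq I(e)$ act as the empty partial map on $R_e$ and that $\sim$ is a (left) congruence. For part (2), however, your route differs from the paper's. The paper deduces (2) directly from (1): since $Me\setminus L_e=I(e)e$ (stability), every element of $fMe\setminus L_e$ lies in $I(e)$ and hence acts as the empty map on $R_e$, while every $x\in L_e$ with $mx=e$ sends $em\in R_e$ to $e$ and so acts non-emptily; thus $\sim$, and a fortiori $\equiv$, cannot relate an element of $fM\cap L_e$ to an element of $fMe\setminus L_e$. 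You instead prove (2) independently of (1), checking the two generators against the relation ``same $L_e$-membership'' by means of your characterization that $zu\in L_e$ iff some $r\in R_e$ has $rz\in R_e$ and $rzu=e$, and then transporting this witness along $u\sim v$. Both arguments rest on the same underlying fact—that for elements of $Me$, membership in $L_e$ is visible from the partial right action on $R_e$—but the paper's version is a one-line consequence of (1), whereas yours is more hands-on, does not invoke the identity $Me\setminus L_e=I(e)e$, and in fact shows the type-(2) generator preserves $L_e$-membership for arbitrary $z\in M$, not just $z\in fI(f)$. Your care about ``associativity'' of the partial action (if $r(zu)\in R_e$ then $rz\in R_e$, by the $\mathscr R$-order) is exactly the point needed to make both your left-congruence claim and your characterization work, so no gap remains.
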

\begin{proof}
Elements of $I(e)$ all act on $R_e$ as the empty function, so the first property defining $\equiv$ is satisfied by $\sim$.  The second property is clear since $\sim$ is a congruence.  Thus ${\equiv}\subseteq {\sim}$.  If $x\in L_e$ with $mx=e$, then $em\in R_e$ and $emx=e$.  Thus $x$ does not act as the empty mapping on $R_e$ and so $\sim$ separates $L_e$ from $I(e)$.  As ${\equiv}\subseteq {\sim}$, we deduce that $\equiv$ separates $fM\cap L_e$ from $fMe\setminus L_e$.
\end{proof}

We next want to show that $\equiv$ is $G_f\times G_e$-stable.

\begin{Prop}\label{p:equivariant}
The equivalence relation $\equiv$ on $fMe$ is $G_f\times G_e$-stable. That is, if $g\in G_f$, $h\in G_e$ and $x\equiv y$.  Then $gxh\inv\equiv gyh\inv$.
\end{Prop}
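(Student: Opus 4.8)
The plan is to exploit that $\equiv$ was defined as the \emph{least} equivalence relation on $fMe$ satisfying (1) and (2), so that it is generated by the union $S$ of those two families of pairs. Fix $g\in G_f$ and $h\in G_e$ and write $\sigma\colon fMe\to fMe$ for the map $x\mapsto gxh\inv$; this does land in $fMe$ since $g\in G_f\subseteq fMf$ gives $fg=g=gf$ and $h\in G_e\subseteq eMe$ gives $eh\inv=h\inv=h\inv e$. The first step is the observation that $\{(a,b)\in fMe\times fMe:\sigma(a)\equiv\sigma(b)\}$ is an equivalence relation on $fMe$ (it is the preimage of $\equiv$ under $\sigma\times\sigma$); hence, by minimality of $\equiv$, it suffices to verify that $\sigma$ carries every pair of $S$ to a pair lying in $\equiv$. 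In fact I would prove the stronger statement that $\sigma$ sends a pair of type (1) to a pair of type (1) and a pair of type (2) to a pair of type (2).

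For type (1): given $x=fste$ with $s\in I(f)$ and $t\in I(e)$, I would use $gf=g$, $eh\inv=h\inv$ and $h\inv e=h\inv$ to rewrite
\[\sigma(x)=g(fste)h\inv=gsth\inv=f\,(gs)(th\inv)\,e,\]
and then note that $gs\in I(f)$ and $th\inv\in I(e)$ because $I(f)$ and $I(e)$ are two-sided ideals, so $\sigma(x)\in fI(f)I(e)e$; thus $\sigma$ maps $fI(f)I(e)e$ into itself. For type (2): given $x=zu$ and $y=zv$ with $z=fz_0$, $z_0\in I(f)$, and $u,v\in L_e$ with $u\sim v$, I would note that $gz=gfz_0=gz_0\in I(f)$ and, using $fg=g$, that $gz=f(gz_0)\in fI(f)$; that $uh\inv,vh\inv\in L_e$, since right multiplication by $h\inv\in G_e$ maps $L_e$ onto $L_e$ (the free right $G_e$-action on $L_e$ recalled in the preliminaries); and that $uh\inv\sim vh\inv$ because $\sim$ is a congruence. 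Then $\sigma(x)=(gz)(uh\inv)$ and $\sigma(y)=(gz)(vh\inv)$ form a pair of type (2), so $\sigma(x)\equiv\sigma(y)$.

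Putting these two checks together shows that $\sigma_{g,h}$ preserves $\equiv$ for all $g\in G_f$ and $h\in G_e$, which is the assertion; applying the same to $\sigma_{g\inv,h\inv}$ in addition would upgrade this to the equivalence $x\equiv y\iff gxh\inv\equiv gyh\inv$, though only the stated implication is needed later. I do not anticipate a genuine obstacle: the whole content is bookkeeping with the one-sided identities $fg=g=gf$ and $eh\inv=h\inv=h\inv e$, together with the fact that each ingredient out of which $\equiv$ is built --- the congruence $\sim$, the ideals $I(e)$ and $I(f)$, the $\mathscr L$-class $L_e$, and the left translate $fI(f)$ --- is visibly stable under the relevant one-sided multiplications by units. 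The one point worth a remark is that $I(e)$ or $I(f)$ need not be an ideal when it is empty, but then the corresponding defining family for $\equiv$ is empty and there is nothing to check.
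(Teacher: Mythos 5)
Your proof is correct and follows essentially the same route as the paper: both arguments rest on the minimality of $\equiv$ and on checking that each generating family --- pairs from $fI(f)I(e)e$, and pairs $zu,zv$ with $z\in fI(f)$, $u,v\in L_e$, $u\sim v$ --- is preserved by $x\mapsto gxh\inv$, using that $I(e),I(f)$ are ideals, $fI(f)$ is $G_f$-invariant on the left, $L_e$ is $G_e$-invariant on the right, and $\sim$ is a congruence. The paper packages this by introducing the auxiliary relation $x\smallfrown y \iff gxh\inv\equiv gyh\inv$ for all $g,h$ and showing it satisfies the defining properties of $\equiv$, while you fix $g,h$ and pull back $\equiv$ along $\sigma\times\sigma$; this is only a cosmetic difference.
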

\begin{proof}
Define an equivalence relation $\smallfrown$ on $fMe$ by $x\smallfrown y$ if and only if $gxh\inv\equiv gyh\inv$ for all $g\in G_f$ and $h\in G_e$.  Clearly, ${\smallfrown}\subseteq {\equiv}$ and $\smallfrown$ is $G_f\times G_e$-stable.  We claim that $\smallfrown$ satisfies the defining properties of $\equiv$ and hence the reverse inclusion holds.  Since $fI(f)I(e)e$ is $G_f\times G_e$-invariant, the first property is clear.  Since $L_e$ is invariant under right multiplication by elements of $G_e$, $\sim$ is a congruence on $M$ and $fI(f)$ is invariant under left multiplication by elements of $G_f$, the second property is also clear.
\end{proof}

It follows that $fMe/{\equiv}$ is a $G_f\times G_e$-set via the action $(g,h)[x]_{\equiv}=[gxh^{-1}]_{\equiv}$.  Let $C$ be the $\equiv$-class of $fI(f)I(e)e$ (which is possibly empty if $e$ or $f$ belongs to the minimal ideal).  Then $X=(fMe/{\equiv})\setminus \{C\}$ is a $G_f\times G_e$-invariant subset of $fMe/{\equiv}$. Let $U=\Bbbk X$ be the corresponding permutation module.  The next proposition shows that $fI(f)e\setminus C\subseteq \til L_e$.

\begin{Prop}\label{p:kick.out}
Suppose that $z\in fI(f)e$.  If $z\notin \til L_e$, then $z\in fI(f)I(e)e$.  Consequently, $fI(f)e\setminus C\subseteq \til L_e$.
\end{Prop}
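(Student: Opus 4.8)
The plan is to derive the required factorization of $z$ by pinning down a convenient idempotent inside the right stabilizer of $z$. First I would record the two trivial facts: since $z\in fI(f)e$ and $I(f)$ is an ideal, $z\in I(f)$; and since $z\in fMe$ we have $fz=z=ze$. Because $M\in\mathbf{ER}$, it is right Fountain by Corollary~\ref{c:examples}, so $\til L_z$ contains an idempotent, and Proposition~\ref{p:char.til} then guarantees that the right stabilizer $N$ of $z$ has a unique minimal left ideal; I would fix an idempotent $g$ with $Ng$ this minimal left ideal. Exactly as in the converse half of the proof of Proposition~\ref{p:char.til}, one checks $z\mathrel{\til{\mathscr L}}g$: if $h\in E(M)$ with $zh=z$ then $h\in N$, so $Ng\subseteq Nh$ by minimality, whence $g\in Nh$ and $gh=g$; conversely $zg=z$ shows that $gh=g$ implies $zh=(zg)h=z(gh)=z$.

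The crux is to show $g\in I(e)$. Since $\til{\mathscr L}$ is an equivalence relation and $z\notin\til L_e$, we get $g\notin\til L_e$. Applying the relation $z\mathrel{\til{\mathscr L}}g$ to the idempotent $e$ (and using $ze=z$) gives $ge=g$, so $g\in Me$ and hence $g\leq_{\mathscr L}e$, and in particular $g\leq_{\mathscr J}e$. If we had $g\J e$, then writing $g=ue$ and invoking stability ($ue\J e\iff ue\eL e$) would force $g\eL e$; but $g$ and $e$ are regular, so $\mathscr L$ and $\til{\mathscr L}$ agree on them, giving $g\mathrel{\til{\mathscr L}}e$ --- contradicting $g\notin\til L_e$. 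Hence $g<_{\mathscr J}e$, that is, $e\notin MgM$, i.e.\ $g\in I(e)$.

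Finally, from $fz=z$, $zg=z$ and $ze=z$ one computes $fzge=z$, so $z=f\cdot z\cdot g\cdot e$ lies in $fI(f)I(e)e$, using $z\in I(f)$ and $g\in I(e)$. The concluding inclusion $fI(f)e\setminus C\subseteq\til L_e$ is then immediate: by the first clause in the definition of $\equiv$ the whole of $fI(f)I(e)e$ lies in the single class $C$, so an element of $fI(f)e$ lying outside $C$ cannot lie in $fI(f)I(e)e$, and by what was just proved it must therefore lie in $\til L_e$. I expect the only genuinely delicate steps to be the verification that $z\mathrel{\til{\mathscr L}}g$ and the dichotomy $g<_{\mathscr J}e$ versus $g\J e$; everything else is bookkeeping with $fz=z=ze$ and the ideal property of $I(f)$ and $I(e)$.
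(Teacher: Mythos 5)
Your proof is correct and is essentially the paper's own argument: pick an idempotent $g$ in $\til L_z$, deduce $ge=g$ from $ze=z$, use stability plus the agreement of $\mathscr L$ and $\til{\mathscr L}$ on regular elements to conclude $g\in I(e)e$, and then factor $z=zg\in fI(f)I(e)e$. The only difference is cosmetic: you manufacture $g$ via the right stabilizer and Proposition~\ref{p:char.til}, which is an unnecessary detour since $M\in\mathbf{ER}$ is right Fountain (Corollary~\ref{c:examples}) and so $\til L_z$ contains an idempotent directly, though you do helpfully spell out the stability step the paper leaves implicit.
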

\begin{proof}
If $z\notin \til L_e$, then $z\in \til L_u$ with $u\in E(M)$ and $u\notin L_e$.  Then from $ze=z$, we obtain $ue=u$.  So $u\in I(e)e$.  Thus $z=zu\in fI(f)I(e)e$.
\end{proof}

As ${\equiv}\subseteq {\sim}$ by Proposition~\ref{p:control.eq}, we have that $\sim$ descends to an equivalence relation on $fMe/{\equiv}$.  Also, as $\sim$ is a congruence on $M$, we have that $\sim$ is a $G_f\times G_e$-stable equivalence relation on $fMe/{\equiv}$.    Fix a transversal $T$ to the restriction of $\sim$ to $fM\cap L_e$ and write $\ov x$ for the element of $T$ in the $\sim$-class of $x\in fM\cap L_e$. Let $U^\flat$ be the subspace of $U$ spanned by all $[z]_{\equiv}$ with $z\in (fM\cap \til L_e)\setminus (L_e\cup C)$ and all $[x]_{\equiv}-[y]_{\equiv}$ such that $x,y\in fM\cap L_e$ with  $x\sim y$.  Since $\sim$ is $G_f\times G_e$-stable and $(fM\cap \til L_e)\setminus (L_e\cup C)$ is $G_f\times G_e$-invariant, it is immediate that $U^\flat$ is a $\Bbbk[G_f\times G_e]$-submodule.  The proof of the following proposition is routine.

\begin{Prop}\label{p:basis.flat.U}
The vector space $U^\flat$ has basis the  elements of the form $[x]_{\equiv}-[\ov x]_{\equiv}$ with $x\in (fM\cap L_e)\setminus T$ and $[z]_{\equiv}$ with $z\in (fM\cap \til L_e)\setminus (L_e\cup C)$.
\end{Prop}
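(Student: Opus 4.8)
The statement breaks into \emph{spanning} and \emph{linear independence}. For spanning, recall that $U^\flat$ is by definition spanned by the vectors $[z]_\equiv$ with $z\in(fM\cap\til L_e)\setminus(L_e\cup C)$ together with the differences $[x]_\equiv-[y]_\equiv$ for $x,y\in fM\cap L_e$ with $x\sim y$. Since such $x,y$ lie in a common $\sim$-class we have $\ov x=\ov y$, so
\[
[x]_\equiv-[y]_\equiv=\bigl([x]_\equiv-[\ov x]_\equiv\bigr)-\bigl([y]_\equiv-[\ov x]_\equiv\bigr),
\]
and the terms with $x\in T$ vanish because there $\ov x=x$. Hence the family in the statement already spans $U^\flat$.

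Linear independence is then formal \emph{once we know that $\equiv$ identifies no two distinct elements lying outside its class $C$} (the only substantive point, treated in the next paragraph). Indeed, by Proposition~\ref{p:control.eq}(2) no $\equiv$-class meets both $fM\cap L_e$ and its complement, and $C$, being the class of $fI(f)I(e)e\subseteq Me\setminus L_e$, contains no element of $L_e$; so we may split $X=X_L\sqcup X_N$ into the set of $\equiv$-classes contained in $fM\cap L_e$ and the set of the remaining classes, giving $U=\Bbbk X_L\oplus\Bbbk X_N$. The vectors $[z]_\equiv$ in the statement lie in $\Bbbk X_N$ and, by the injectivity of $z\mapsto[z]_\equiv$, are distinct standard basis vectors there; the vectors $[x]_\equiv-[\ov x]_\equiv$ lie in $\Bbbk X_L$ and, grouping $x$ according to its $\sim$-class $S$, have the form $b_x-b_{\ov x}$ for distinct standard basis vectors $b_x$ of $\Bbbk X_L$, which are linearly independent within each $S$ and involve disjoint sets of basis vectors for distinct $S$. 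Hence the displayed family is a basis.

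It remains to show that $\equiv$ collapses nothing outside $C$; this is where the hypotheses on $M$ are really used. Since rule~(2) in the definition of $\equiv$ produces only pairs $\{zu,zv\}$ with $z\in fI(f)$, $u,v\in L_e$, $u\sim v$, and any such product lies in $fI(f)e$ (using $u=ue$ and $fI(f)M\subseteq fI(f)$), it would suffice to prove $fI(f)e\cap\til L_e=\emptyset$: then by Proposition~\ref{p:kick.out} one has $fI(f)e\subseteq C$, so every rule-(2) pair stays inside the single class $C$ and no nontrivial identification occurs elsewhere. Proving $fI(f)e\cap\til L_e=\emptyset$ is the step I expect to require genuine work, drawing on the $\mathbf{ER}$ hypothesis — equivalently, that $M$ acts on each $R_g$, $g\in E(M)$, by partial injective maps (Proposition~\ref{p:ER}) — together with the facts that $z<_{\mathscr J}f$ for all $z\in fI(f)$ and that no element of $L_e$ acts as the empty partial map on $R_e$ (as in the proof of Proposition~\ref{p:control.eq}). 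The rest of the argument is the bookkeeping above.
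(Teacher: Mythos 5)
Your spanning argument is fine, but the independence half rests on a claim that is not only left unproved (``the step I expect to require genuine work'') but is actually false. First, the auxiliary statement $fI(f)e\cap\til L_e=\emptyset$ fails badly: take $f=1$, so that $fI(f)e=I(1)e$; every non-regular element $x$ of $\til L_e$ satisfies $x=xe\in I(1)e$, so the intersection is nonempty whenever $\til L_e\neq L_e$. Worse, the weaker assertion your independence argument actually uses --- that $\equiv$ identifies no two distinct elements outside $C$, so that the $[x]_{\equiv}$ are ``distinct standard basis vectors'' --- is also false. In the free left regular band on $\{a,b,c\}$ (which lies in $\mathbf{ER}$) take $e=abc$ and $f=1$. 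Then $R_e=\{abc\}$, so every element of $L_e$ (the six full-support words) acts on $R_e$ as the identity of a one-point set and hence all of $L_e$ is a single $\sim$-class; $I(e)=\emptyset$ because $e$ lies in the minimal ideal, so $C$ is empty; and rule~(2) with $z=a\in fI(f)$, $u=bca$, $v=cba$ gives $abc=zu\equiv zv=acb$, two \emph{distinct} elements of $L_e$ outside $C$. So the injectivity of $x\mapsto[x]_{\equiv}$ on $fM\cap L_e$ that you invoke genuinely fails, and no argument along the lines you sketch (forcing all rule-(2) pairs into $C$) can exist.

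In fact this example shows the crux you identified is real: with any transversal $T$ of the single $\sim$-class above, some $x\in(fM\cap L_e)\setminus T$ satisfies $x\equiv\ov x$ with $x\neq\ov x$, so the listed family contains the zero vector $[x]_{\equiv}-[\ov x]_{\equiv}$ (and repetitions). What one can prove --- and what the rest of the paper needs --- is that the listed vectors span $U^\flat$ and that the \emph{distinct nonzero} ones among them form a basis: by Proposition~\ref{p:control.eq}(2) the $\equiv$-classes meeting $fM\cap L_e$ and the remaining classes occupy disjoint sets of coordinates of $U$; the vectors $[z]_{\equiv}$ are standard basis vectors in the second block; and since ${\equiv}\subseteq{\sim}$, each difference $[x]_{\equiv}-[\ov x]_{\equiv}$ is an edge of a star centred at $[\ov x]_{\equiv}$ inside the $\sim$-class of $x$, and such edges are independent once zeros and duplicates (which arise exactly when $\equiv$ collapses elements of $fM\cap L_e$, as above) are discarded. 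The possible collapsing is then harmless later because in the proof of Proposition~\ref{p:iso.bimod} it is verified that $a\equiv b$ implies $a+K'=b+K'$, which is what makes $\rho$ well defined. So: your bookkeeping (spanning, the block decomposition, the star picture within a $\sim$-class) is correct, but the non-collapsing claim on which you hang linear independence is false, and with it the proposal does not establish the statement as you have framed it.
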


Our goal is to show that $U^\flat\cong f\rad(\Bbbk \til L_e)/fI(f)\rad(\Bbbk \til L_e)$ as a $\Bbbk[G_f\times G_e]$-module.

\begin{Prop}\label{p:iso.bimod}
There is an isomorphism \[U^\flat\cong f\rad(\Bbbk \til L_e)/fI(f)\rad(\Bbbk \til L_e)\] of $\Bbbk[G_f\times G_e]$-modules.
\end{Prop}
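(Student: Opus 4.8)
The plan is to produce an explicit surjective homomorphism of $\Bbbk[G_f\times G_e]$-modules from $f\rad(\Bbbk\til L_e)$ onto $U^\flat$ and to show that its kernel is exactly $fI(f)\rad(\Bbbk\til L_e)$. The first step is to make the two sides transparent. Since $\Bbbk[\til L_e\setminus L_e]\subseteq\rad(\Bbbk\til L_e)$ by Proposition~\ref{p:in.radical}, Proposition~\ref{p:rad.quotient} applied to the bimodule quotient $\Bbbk\til L_e\to\Bbbk L_e$ shows $\rad(\Bbbk\til L_e)$ is the preimage of $\rad(\Bbbk L_e)$, and together with Proposition~\ref{p:ER.rad.L} this yields a basis of $\rad(\Bbbk\til L_e)$ given by $\til L_e\setminus L_e$ and the differences $x-\overline x$ with $x$ outside a fixed $\sim$-transversal of $L_e$. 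Multiplying on the left by $f$ (which sends a standard basis vector of $\Bbbk\til L_e$ to $0$ or to another one, since $f\Bbbk\til L_e=\Bbbk(fM\cap\til L_e)$ with $fM\cap\til L_e\subseteq fMe$) and recording which of the resulting differences land in $\Bbbk[(fM\cap\til L_e)\setminus L_e]$, one obtains $f\rad(\Bbbk\til L_e)=\Bbbk[(fM\cap\til L_e)\setminus L_e]\oplus\rho$, where $\rho$ is spanned by the differences $u-\overline u$ with $u$ outside a $\sim$-transversal $T$ of $fM\cap L_e$. Thus $f\rad(\Bbbk\til L_e)$ has a basis indexed by the same two families as the basis of $U^\flat$ in Proposition~\ref{p:basis.flat.U}, except with $(fM\cap\til L_e)\setminus L_e$ in place of $(fM\cap\til L_e)\setminus(L_e\cup C)$.

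Next I would define $\theta\colon f\Bbbk\til L_e\to U$ on the basis $fM\cap\til L_e$ by $\theta(m)=[m]_\equiv$ if $[m]_\equiv\in X$ and $\theta(m)=0$ if $[m]_\equiv=C$. Left multiplication by $g\in G_f$ and right multiplication by $h\in G_e$ each permute $fM\cap\til L_e$ (the first because $g^{-1}$ undoes it, the second by Corollary~\ref{c:have.action}), so $f\Bbbk\til L_e$ is a $\Bbbk[G_f\times G_e]$-module, and since $\equiv$ is $G_f\times G_e$-stable (Proposition~\ref{p:equivariant}) and $C$ is $G_f\times G_e$-invariant, $\theta$ is a $\Bbbk[G_f\times G_e]$-module homomorphism. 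Evaluating it on the basis of $f\rad(\Bbbk\til L_e)$ from the previous paragraph, the image is spanned by the $[z]_\equiv$ with $z\in(fM\cap\til L_e)\setminus(L_e\cup C)$ together with the $[u]_\equiv-[\overline u]_\equiv$ with $u\in(fM\cap L_e)\setminus T$, which is $U^\flat$ by Proposition~\ref{p:basis.flat.U}. Hence $\theta$ restricts to a surjection $f\rad(\Bbbk\til L_e)\twoheadrightarrow U^\flat$.

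It remains to identify $\ker(\theta|_{f\rad(\Bbbk\til L_e)})$ with $fI(f)\rad(\Bbbk\til L_e)$. The inclusion $fI(f)\rad(\Bbbk\til L_e)\subseteq\ker\theta$ is routine: a spanning vector is $fmv$ with $m\in I(f)$ and $v$ a basis vector of $\rad(\Bbbk\til L_e)$; if $v\in\til L_e\setminus L_e$ then $v\in I(e)e$ by stability, so $fmv\in fI(f)I(e)e\subseteq C$ whenever it is nonzero in $\Bbbk\til L_e$, and $\theta(fmv)=0$; if $v=x-\overline x$, then taking $z=fm\in fI(f)$ and $u=x,\ v=\overline x$ in the second defining property of $\equiv$ gives $fmx\equiv fm\overline x$ — here Proposition~\ref{p:kick.out} absorbs into $C$ the case that one of $fmx,fm\overline x$ leaves $\til L_e$ — so $\theta$ kills $fm(x-\overline x)$. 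This already produces a surjection $f\rad(\Bbbk\til L_e)/fI(f)\rad(\Bbbk\til L_e)\twoheadrightarrow U^\flat$, so it suffices to prove the opposite dimension inequality. The crux is the lemma: if $x\equiv y$ in $fMe$ then $\hat x-\hat y\in fI(f)\rad(\Bbbk\til L_e)$, where $\hat x$ denotes $x$ regarded inside $\Bbbk\til L_e$ (thus $0$ if $x\notin\til L_e$). One telescopes along a chain realizing $x\equiv y$: a step coming from the second defining property contributes $z(u-v)=fm(u-v)$ with $u\sim v$ in $L_e$, which lies in $fI(f)\rad(\Bbbk\til L_e)$ because $u-v\in\rad(\Bbbk\til L_e)$; an element occurring in a step coming from the first defining property lies in $fI(f)I(e)e$, and, when it is in $\til L_e$, it lies in $fI(f)\rad(\Bbbk\til L_e)$ by the one genuinely new point, namely $fI(f)I(e)e\cap\til L_e\subseteq fI(f)\rad(\Bbbk\til L_e)$. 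To see this, let $w=fpqe\in\til L_e$ with $p\in I(f)$, $q\in I(e)$: the idempotent $\til{\mathscr L}$-equivalent to $qe$ (which exists as $M$ is right Fountain) would, if $qe\notin\til L_e$, right-fix $w$ yet fail to be $\mathscr L$-equivalent to $e$, contradicting $w\in\til L_e$; hence $qe\in\til L_e\cap I(e)e$, so $qe\in\rad(\Bbbk\til L_e)$ by Proposition~\ref{p:in.radical}, and therefore $w=fp\cdot qe\in fI(f)\rad(\Bbbk\til L_e)$.

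Granting the lemma, the resulting congruences ``$\hat x\equiv\hat y$ modulo $fI(f)\rad(\Bbbk\til L_e)$ whenever $x\equiv y$'', combined with Proposition~\ref{p:control.eq} (which keeps $L_e$ and $fMe\setminus L_e$ in separate $\equiv$-classes), collapse the basis of $f\rad(\Bbbk\til L_e)$ modulo $fI(f)\rad(\Bbbk\til L_e)$ to a spanning set of cardinality $\dim U^\flat$; together with the surjection above this forces $\theta$ to induce an isomorphism $f\rad(\Bbbk\til L_e)/fI(f)\rad(\Bbbk\til L_e)\cong U^\flat$ of $\Bbbk[G_f\times G_e]$-modules. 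I expect the main obstacle to be the bookkeeping in this final step: establishing the lemma $fI(f)I(e)e\cap\til L_e\subseteq fI(f)\rad(\Bbbk\til L_e)$ and, throughout, ensuring that $\equiv$-chains in $fMe$ that wander out of $\til L_e$ still yield valid identities in the partial transformation module $\Bbbk\til L_e$.
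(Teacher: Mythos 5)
Your proposal is correct, and at its core it is the paper's own argument: the restriction of your $\theta$ to $f\rad(\Bbbk \til L_e)$ is the paper's map $\psi$, your verification that $fI(f)\rad(\Bbbk \til L_e)\subseteq\ker\theta$ is the same case analysis via Propositions~\ref{p:kick.out} and~\ref{p:control.eq}, and your telescoping lemma (``$x\equiv y$ implies $\hat x-\hat y\in fI(f)\rad(\Bbbk \til L_e)$'') is precisely the fact the paper uses to show its inverse map $\rho$ is well defined. The only real divergence is the finish: the paper writes $\rho$ down on the basis of Proposition~\ref{p:basis.flat.U} and checks that $\Psi$ and $\rho$ are mutually inverse, whereas you conclude by a dimension count, using the lemma to collapse the basis of $f\rad(\Bbbk \til L_e)$ modulo $fI(f)\rad(\Bbbk \til L_e)$ to a spanning set of size at most $\dim U^\flat$; both finishes lean equally on Proposition~\ref{p:basis.flat.U} (you for the exact value of $\dim U^\flat$, the paper for defining $\rho$ on a basis), so this is the same proof in different packaging rather than a new route. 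A genuine plus of your write-up is that you prove the inclusion $fI(f)I(e)e\cap\til L_e\subseteq fI(f)\rad(\Bbbk \til L_e)$, which the paper only asserts (``Notice that $fI(f)I(e)e\subseteq K'$''); just tighten the one-line contradiction there: from $w\in\til L_e$ and $wa=w$ you only get $ea=e$, which alone does not force $a\mathrel{\mathscr L}e$ (take $a=1$), so you must also use $ae=a$, which follows from $a\mathrel{\til{\mathscr L}}qe$ and $qe\in Me$; the two together give $a\mathrel{\mathscr L}e$ and hence $qe\in\til L_e$, after which your conclusion $w=fp\cdot qe\in fI(f)\rad(\Bbbk \til L_e)$ stands.
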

\begin{proof}
 It follows from Proposition~\ref{p:ER.rad.L} that $f\rad(\Bbbk \til L_e)$ has basis consisting of the elements $x-\ov x$ with $x\in (fM\cap L_e)\setminus T$ and $y\in fM\cap \til L_e\setminus L_e$.  Define $\psi\colon f\rad(\Bbbk \til L_e)\to U^\flat$ on these basis elements by $\psi(x-\ov x) = [x]_{\equiv}-[\ov x]_{\equiv}$ if $x\in (fM\cap L_e)\setminus T$ , $\psi(y)=[y]_{\equiv}$ if $y\in fM\cap \til L_e\setminus (L_e\cup C)$ and $\psi(y)=0$ if $y\in C$.  Note that if $x\in T$, then $\psi(x-\ov x)=0=[x]_{\equiv}-[\ov x]_{\equiv}$.   We claim that $\psi$ is a $\Bbbk [G_f\times G_e]$-module homomorphism.  Let $g\in G_f$ and $h\in G_e$.  If $x\in fM\cap L_e$, then $gxh\inv, g\ov xh\inv\in fM\cap L_e$ and $gxh\inv \sim g\ov xh\inv$.  Therefore,  $\psi(g(x-\ov x)h\inv) = \psi(gxh\inv -\ov{gxh\inv})-\psi(g\ov xh\inv -\ov{gxh\inv})=[gxh\inv]_{\equiv}-[\ov{gxh\inv}]_{\equiv}-([g\ov xh\inv]_{\equiv}-[\ov{gxh\inv}]_{\equiv})=g([x]_{\equiv}-[\ov x]_{\equiv})h\inv =g\psi(x-\ov x)h\inv$.  If $y\in fM\cap \til L_e\setminus L_e$, then $gyh\inv \in fM\cap \til L_e\setminus L_e$ and $y\in C$ if and only if $gyh\inv \in C$.  Thus $\psi(gyh\inv)=g\psi(y)h\inv$, trivially, in this case.

We claim that $fI(f)\rad(\Bbbk \til L_e)\subseteq \ker \psi$.  Note that $fI(f)\rad(\Bbbk \til L_e)$ is spanned by elements of the form $z(x-\ov x)$ with $z\in fI(f)$ and $x\in (fM\cap L_e)\setminus T$ and $zy\in \til L_e$ with $z\in fI(f)$ and $y\in fM\cap \til L_e\setminus L_e$.  In the latter case, $zy\in fI(f)I(e)e$ and so $\psi(zy)=0$ if $zy\in \til L_e$ (of course if $zy\notin \til L_e$,  then there is nothing to prove).  In the former case, $zx\equiv z\ov x$ by definition of the equivalence relation.  There are  a couple of cases. If $zx\notin \til L_e$, then $zx\in fI(f)I(e)e$ by Proposition~\ref{p:kick.out} and hence $z\ov x\in C$.  So $\psi(z(x-\ov x))=0$.  The same occurs if $z\ov x\notin \til L_e$.  Thus we may assume that $zx,z\ov x\in \til L_e$.  Since $zx\equiv z\ov x$, it follows from Proposition~\ref{p:control.eq} that either $zx,z\ov x\in L_e$ or $zx,z\ov x\in \til L_e\setminus L_e$.  In the former case, we then have $\ov{zx}=\ov {z\ov x}$ because $\sim$ is a congruence and hence   $\psi(z(x-\ov x)) = \psi(zx-\ov{zx})-\psi(z\ov x-\ov{zx})=[zx]_{\equiv}-[\ov{zx}]_{\equiv} - ([z\ov x]_{\equiv}-[\ov {zx}]_{\equiv})=0$.  In the case that $zx,z\ov x\in \til L_e\setminus L_e$, we have that $\psi(z(x-\ov x)) = \psi(zx)-\psi(z\ov x) = [zx]_{\equiv}-[z\ov x]_{\equiv}=0$.  Thus in all cases, $z(x-\ov x)\in \ker \psi$ and so $\psi$ descends to a well-defined $\Bbbk[G_f\times G_e]$-module homomorphism $\Psi\colon f\rad(\Bbbk \til L_e)/fI(f)\rad(\Bbbk \til L_e)\to U^\flat$.   Also, we shall use without comment that $\Psi$ maps the coset of $x$ to $0$ if $x\in C\cap \til L_e$.

Put $K=fI(f)\rad(\Bbbk \til L_e)$ and define $\rho\colon U^\flat\to f\rad(\Bbbk \til L_e)/fI(f)\rad(\Bbbk \til L_e)$ on the basis from Proposition~\ref{p:basis.flat.U} by $\rho([x]_{\equiv}-[\ov x]_{\equiv})=x-\ov x+K$ if $x\in (fM\cap L_e)\setminus T$ and $\rho([y]_{\equiv}) = y+K$ if $y\in (fM\cap \til L_e)\setminus (L_e\cup C)$.  We must show that $\rho$ is well defined.     We can view $f\Bbbk \til L_e$ as the quotient of $\Bbbk [fMe]$ by the subspace spanned by $fMe\setminus (fM\cap \til L_e)$ and $K$ as a subspace of $f\Bbbk \til L_e$.  Let $K'$ be the preimage of $K$ under the quotient $\Bbbk[fMe]\to f\Bbbk \til L_e$.    Notice that $fI(f)I(e)e\subseteq K'$ and so $x+K'=y+K'$ for all $x,y\in fI(f)I(e)e$.  Suppose that $x,y\in L_e$ with $x\sim y$ and $z\in fI(f)$.  Then $x-y\in \rad(\Bbbk \til L_e)$ by Proposition~\ref{p:ER.rad.L} and so $z(x-y)\in K'$.  Thus $zx+K'=zy+K'$.   We deduce that $a\equiv b$ implies $a+K'=b+K'$.
It follows immediately that $\rho$ is well defined.  

We claim that $\Psi$ and $\rho$ are inverse mappings.  Suppose that $x\in (fM\cap L_e)\setminus T$.  Then $\rho(\Psi(x-\ov x+K))=\rho(\psi(x-\ov x)) = \rho([x]_{\equiv}-[\ov x]_{\equiv}) = x-\ov x+K$ and if $y\in (fM\cap \til L_e)\setminus (L_e\cup C)$, then $\rho(\Psi(y+K)) =\rho\psi(y)=\rho([y]_{\equiv}) = y+K$.  Thus $\rho\circ \Psi$ is the identity.

Similarly, if $x\in (fM\cap L)\setminus T$, then $\Psi(\rho([x]_{\equiv}-[\ov x]_{\equiv})) = \Psi(x-\ov x+K)=[x]_{\equiv}-[\ov x]_{\equiv}$ and if $y\in (fM\cap \til L_e)\setminus (L_e\cup C)$, then $\Psi(\rho([y]_{\equiv}))= \Psi(y+K)=[y]_{\equiv}$.  This completes the proof that $\Psi$ and $\rho$ are inverse isomorphisms.
\end{proof}

The main result of this section is the next theorem, which is immediate from Theorem~\ref{t:reduction} and the previous results.

\begin{Thm}\label{t:ER.quiver}
Let $M$ be a finite monoid whose idempotents generate an $\mathscr R$-trivial monoid and $\Bbbk$ a field whose characteristic does not divide the order of any maximal subgroup of $M$ and over which each maximal subgroup of $M$ splits.  Let $e_1,\ldots, e_n$ be a complete set of idempotent representatives of the regular $\mathscr J$-classes of $M$.  Then the quiver of $\Bbbk M$ is isomorphic to the directed graph that has vertices indexed by pairs $(e_i,[V])$ where $V$ is a simple $\Bbbk G_{e_i}$-module and edges as follows.  If $(e,[V])$ and $(f,[W])$ are two vertices, then the number of arrow from $(e,[V])$ to $(f,[W])$ is the multiplicity of $W\otimes D(V)$ as an irreducible constituent of the $\Bbbk [G_f\times G_e]$-module $U^\flat$ defined as follows.

Let $\equiv$ be the least equivalence relation on $fMe$ such that:
\begin{enumerate}
\item $x\equiv y$ if $x,y\in fI(f)I(e)e$;
\item $zx\equiv zy$ if $x,y\in L_e$, $z\in fI(f)$ and $x,y$ act as the same partial injection on the right of $R_e$.
\end{enumerate}
Let $X$ be the set of equivalence classes of elements of $fMe$ not meeting $fI(f)I(e)e$; it is naturally a $G_f\times G_e$-set.  Let $U^\flat$ be the submodule of the permutation module $\Bbbk X$ spanned by differences $[x]_{\equiv}-[y]_{\equiv}$ with $x,y\in L_e$ such that $x$ and $y$ act as the same partial injection on the right of $R_e$ and by those $[z]_{\equiv}\in X$ such that $z\in (fM\cap \til L_e)\setminus L_e$.
\end{Thm}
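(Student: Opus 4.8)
The plan is to assemble the theorem from the machinery already developed in this section; the statement is essentially a repackaging of Theorem~\ref{t:reduction} together with Propositions~\ref{p:ER.rightinv} and~\ref{p:iso.bimod}, as the sentence preceding it advertises. First I would verify the three hypotheses of Theorem~\ref{t:reduction}. Hypotheses~(1) and~(2), that the characteristic of $\Bbbk$ divides the order of no maximal subgroup and that $\Bbbk$ splits every maximal subgroup, are exactly the standing assumptions of the present theorem. Hypothesis~(3), that the equivalent conditions of Theorem~\ref{t:projindec} hold, follows from Proposition~\ref{p:ER.rightinv}: since the idempotents of $M$ generate an $\mathscr R$-trivial monoid, each $\Coind_{G_{e_i}}(V)$ with $V$ a simple $\Bbbk G_{e_i}$-module is simple, which is condition~(2) of Theorem~\ref{t:projindec}.

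With the hypotheses in hand, the vertex set is identified as in Subsection~\ref{ss:reptheory}: the simple $\Bbbk M$-modules are the $V^\sharp$ for $V$ a simple $\Bbbk G_{e_i}$-module, and here $V^\sharp = \Coind_{G_{e_i}}(V)$ since that module is simple. For the arrows, Theorem~\ref{t:reduction} states directly that the number of arrows from $(e,[V])$ to $(f,[W])$ is the multiplicity of $W\otimes D(V)$ as an irreducible constituent of the $\Bbbk[G_f\times G_e]$-module $f\rad(\Bbbk \til L_e)/fI(f)\rad(\Bbbk \til L_e)$; here, $\Bbbk$ being a splitting field for both $G_f$ and $G_e$, the simple $\Bbbk[G_f\times G_e]$-modules are exactly the external tensor products of simples over the two factors. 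Proposition~\ref{p:iso.bimod} furnishes a $\Bbbk[G_f\times G_e]$-module isomorphism $f\rad(\Bbbk \til L_e)/fI(f)\rad(\Bbbk \til L_e)\cong U^\flat$, so this multiplicity equals the multiplicity of $W\otimes D(V)$ in $U^\flat$.

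It then remains to check that the data $\equiv$, $X$ and $U^\flat$ as spelled out in the statement coincide with those constructed in the body of the subsection. The relation $\equiv$ of the statement is exactly the relation on $fMe$ introduced before Proposition~\ref{p:control.eq}: its second defining clause is clause~(2) there, with the congruence $\sim$ written out as ``acting as the same partial injection on the right of $R_e$''. Since $fI(f)I(e)e$ constitutes a single $\equiv$-class, namely $C$, the set $X$ of $\equiv$-classes not meeting $fI(f)I(e)e$ is $(fMe/{\equiv})\setminus\{C\}$, and $\Bbbk X$ is the module $U$ of the text. Finally, the spanning set given for $U^\flat$ in the statement, namely the differences $[x]_{\equiv}-[y]_{\equiv}$ with $x,y\in L_e$ acting as the same partial injection on $R_e$ together with the $[z]_{\equiv}$ for $z\in (fM\cap \til L_e)\setminus L_e$, is the one used to define $U^\flat$ just before Proposition~\ref{p:basis.flat.U}, once one observes that basis elements indexed by $C$ simply do not appear in $\Bbbk X$. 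I expect no genuine obstacle: the one point needing care is precisely this bookkeeping, that classes in $C\cap \til L_e$ drop out, together with the standard translation (already invoked in Theorem~\ref{t:reduction}) between irreducible constituents of a $\Bbbk[G_f\times G_e]$-module and pairs of constituents over the two group factors.
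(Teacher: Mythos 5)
Your proposal is correct and follows the paper's own route: the paper states the theorem is immediate from Theorem~\ref{t:reduction} together with Proposition~\ref{p:ER.rightinv} (supplying condition~(2) of Theorem~\ref{t:projindec}) and Proposition~\ref{p:iso.bimod}, which is exactly the assembly you carry out. The bookkeeping you flag, matching $\equiv$, $X$ and $U^\flat$ with the constructions in the text and discarding the class of $fI(f)I(e)e$, is precisely the only remaining content, and you handle it correctly.
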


Recall that a finite monoid $M$ is \emph{aperiodic} if all its maximal subgroups are trivial.

\begin{Cor}\label{c:aperiodic.ER}
Let $M$ be a finite aperiodic monoid whose idempotents generate an $\mathscr R$-trivial monoid and let $\Bbbk$ be a field.  Then the quiver of $\Bbbk M$ is isomorphic to the directed graph that has one vertex for each regular $\mathscr J$-class of $M$ and with edge set as follows.  If $e,f\in E(M)$, then the number of arrows from $J_e$ to $J_f$ is $|X|-|R_e\cap Mf|$ where $X$ is defined in the following way.  Let $\equiv$ be the least equivalence relation on $fMe$ such that:
\begin{enumerate}
\item $x\equiv y$ if $x,y\in fI(f)I(e)e$;
\item $zx\equiv zy$ if $x,y\in L_e$, $z\in fI(f)$ and $x,y$ act as the same partial injection on the right of $R_e$.
\end{enumerate}
Then $X$ is the set of equivalence classes of elements of $fM\cap \til L_e$ not meeting $fI(f)I(e)e$.
\end{Cor}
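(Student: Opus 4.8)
The plan is to specialize Theorem~\ref{t:ER.quiver} to the case in which every maximal subgroup of $M$ is trivial and then to read off the multiplicity appearing there as an honest dimension. We may take $e$ and $f$ among the chosen representatives $e_1,\dots,e_n$, since the vertices of the quiver are indexed by the regular $\mathscr J$-classes. First I would note that aperiodicity forces $G_{e_i}=\{e_i\}$ for each $i$, so each $G_{e_i}$ has a single simple module and the vertices, originally the pairs $(e_i,[V])$, are exactly the regular $\mathscr J$-classes. Moreover $\Bbbk[G_f\times G_e]\cong\Bbbk$, so $W\otimes D(V)$ is the unique simple $\Bbbk[G_f\times G_e]$-module and its multiplicity as an irreducible constituent of $U^\flat$ is simply $\dim_\Bbbk U^\flat$. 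Thus Theorem~\ref{t:ER.quiver} reduces the corollary to the identity $\dim_\Bbbk U^\flat=|X|-|R_e\cap Mf|$.

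Next I would compute $\dim_\Bbbk U^\flat$ from Proposition~\ref{p:basis.flat.U}. A basis of $U^\flat$ consists of the vectors $[z]_\equiv$ with $z\in(fM\cap\til L_e)\setminus(L_e\cup C)$, where $C$ is the $\equiv$-class of $fI(f)I(e)e$, together with the vectors $[x]_\equiv-[\ov x]_\equiv$ with $x\in(fM\cap L_e)\setminus T$, where $T$ is a transversal of $\sim$ on $fM\cap L_e$. By Proposition~\ref{p:control.eq}(2) an $\equiv$-class meeting $L_e$ is contained in $L_e$, hence meets neither $(fM\cap\til L_e)\setminus L_e$ nor $fI(f)I(e)e$. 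Writing $k$ for the number of $\equiv$-classes meeting $fM\cap L_e$, these $k$ classes (all $\ne C$) together with the $\equiv$-classes meeting $(fM\cap\til L_e)\setminus L_e$ and different from $C$ exhaust the $\equiv$-classes counted by $X$, so the number of first-kind basis vectors is $|X|-k$; and since $\equiv\subseteq\sim$ and those $k$ classes group into $|T|$ blocks under $\sim$, the number of second-kind basis vectors is $k-|T|$. Hence $\dim_\Bbbk U^\flat=|X|-|T|$, and it remains to prove $|T|=|R_e\cap Mf|$, i.e. that $\sim$ has exactly $|R_e\cap Mf|$ classes meeting $fM\cap L_e$.

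This is where aperiodicity really enters. Since the hypothesis on the characteristic is vacuous, Proposition~\ref{p:ER.rad.L} and its proof apply, and because $G_e=\{e\}$, for $x\in L_e$ and $r\in R_e$ one has $rx\in R_e$ if and only if $rx=e$ by stability; thus the partial injection $x$ induces on $R_e$ has singleton domain $\{r_x\}$, where $r_x$ is the unique element of $R_e$ with $r_xx=e$. Consequently $x\sim y$ if and only if $r_x=r_y$, and $x\mapsto r_x$ descends to a bijection from the $\sim$-classes in $L_e$ onto $R_e$. I would then show that a $\sim$-class $\sigma\subseteq L_e$ meets $fM$ if and only if its image $r_\sigma$ lies in $Mf$, that is, $r_\sigma f=r_\sigma$. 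If $fx=x$ with $x\in\sigma$, then $(r_xf)x=r_x(fx)=r_xx=e$, so $e\in r_xfM\subseteq r_xM=eM$ gives $r_xf\mathrel{\mathscr R}e$, whence $r_xf\in R_e$ and uniqueness of $r_x$ forces $r_xf=r_x$. Conversely, given $r\in R_e$ with $rf=r$, choose $x_0\in L_e$ with $r_{x_0}=r$ and set $x=fx_0$; then $fx=x$, and $rx=(rf)x_0=rx_0=e$ shows both that $x\in L_e$ (it lies $\leq_{\mathscr L}x_0\mathrel{\mathscr L}e$ and $\geq_{\mathscr L}e$) and that $r_x=r$, so $x\in\sigma\cap fM\cap L_e$. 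Hence $x\mapsto r_x$ restricts to a bijection between the $\sim$-classes meeting $fM\cap L_e$ and $R_e\cap Mf$, giving $|T|=|R_e\cap Mf|$ and completing the proof.

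The main obstacle is exactly this last identity $|T|=|R_e\cap Mf|$. The rest is routine bookkeeping with the basis of $U^\flat$ recorded in Proposition~\ref{p:basis.flat.U} together with facts already established, but identifying $|T|$ with $|R_e\cap Mf|$ rests on the aperiodic-specific observation that the partial injection induced on $R_e$ by $x\in L_e$ is completely determined by, and determines, the single element $r_x$ with $r_xx=e$, and then on passing between \emph{$x$ fixed by left multiplication by $f$} and \emph{$r_x$ fixed by right multiplication by $f$} using the uniqueness of $r_x$ and a short Green's-order computation.
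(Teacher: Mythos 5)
Your argument is correct, and it follows the paper's overall scheme: specialize Theorem~\ref{t:ER.quiver} using that aperiodicity makes all maximal subgroups trivial (so the field hypotheses are vacuous, the vertices are the regular $\mathscr J$-classes, and the arrow count becomes $\dim_\Bbbk U^\flat$), and then verify $\dim_\Bbbk U^\flat=|X|-|T|$ with $T$ a transversal of $\sim$ on $fM\cap L_e$ --- your bookkeeping with Propositions~\ref{p:basis.flat.U} and~\ref{p:control.eq} is a careful and accurate way to get this count. Where you genuinely diverge is in the identity $|T|=|R_e\cap Mf|$. The paper gets it representation-theoretically: since $G_e$ is trivial, $\Bbbk L_e/\rad(\Bbbk L_e)\cong D(\Bbbk R_e)$ is the simple module attached to $J_e$, a transversal $T$ indexes a basis of $f\bigl(\Bbbk L_e/\rad(\Bbbk L_e)\bigr)$ by Proposition~\ref{p:ER.rad.L}, and $\dim fD(\Bbbk R_e)=|R_e\cap Mf|$ because $f$ acts on the basis $R_e$ of $D(\Bbbk R_e)$ as a partial injection with image indexed by $R_e\cap Mf$. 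You instead argue combinatorially: in the aperiodic case $rx\in R_e$ forces $rx=e$ (stability plus $G_e=\{e\}$), so each $x\in L_e$ induces a partial injection with singleton domain $\{r_x\}$, whence $\sim$ on $L_e$ is the fiber relation of $x\mapsto r_x$, and your Green's-relations computation (using uniqueness of $r_x$) shows a $\sim$-class meets $fM$ exactly when $r_x f=r_x$. This is an explicit bijection between the $\sim$-classes meeting $fM\cap L_e$ and $R_e\cap Mf$, extending the bijection $\psi$ already implicit in the proof of Proposition~\ref{p:ER.rad.L}; it is more elementary and self-contained (no appeal to $fD(\Bbbk R_e)$), at the cost of being special to the aperiodic situation, whereas the paper's formulation makes transparent that $|T|$ is the dimension of $f$ applied to the relevant simple module, which is the viewpoint that persists when the maximal subgroups are nontrivial.
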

\begin{proof}
This follows from Theorem~\ref{t:ER.quiver}, Proposition~\ref{p:ER.rad.L} and the observation that  $\Bbbk L_e/\rad(\Bbbk L_e)\cong D(\Bbbk R_e)$, as $D(\Bbbk R_e)$ is the simple module corresponding to the $\mathscr J$-class $J_e$, and so if $T$ is a transversal to $\sim$ on $fM\cap L_e$, then $|T|=\dim fD(\Bbbk R_e) = |R_e\cap Mf|$.
\end{proof}

\subsection{Quivers of block groups}

Recall that a monoid $M$ is a \emph{block group} if, for each $a\in M$, there is at most one $b\in M$ such that $aba=a$ and $bab=b$.  The power set of a finite group is an important example of a block group.  A finite monoid is a block group if and only if its idempotents generate a $\mathscr J$-trivial monoid (and hence a monoid that is both $\mathscr R$-trivial and $\mathscr L$-trivial); in particular, block groups are Fountain.  Thus Theorem~\ref{t:ER.quiver} can be applied to compute the quiver of the algebra of a block group.  However, there are a number of simplifications in this case.

Let $M$ be a block group and fix idempotents $e,f\in M$. Let $\Irr(e,f)$ be the set of elements $x\in (\til R_f\cap \til L_e)\setminus (R_f\cup L_e)$ such that if $x=yz$ with $y\in \til R_f$ and $z\in \til L_e$, then $y\in R_f$ or $z\in L_e$.

\begin{Prop}\label{p:jbelow}
Let $M$ be a block group and $e,f\in E(M)$.  If $x\in fMe$, then the following are equivalent.
\begin{enumerate}
\item $x\in \Irr(e,f)$.
\item $x$ is not regular and $x\notin fI(f)I(e)e$
\end{enumerate}
\end{Prop}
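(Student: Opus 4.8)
The plan is to prove the equivalence by unwinding the definitions, using the structure theory of block groups (where $\til{\mathscr L}$, $\til{\mathscr R}$ behave well and $\mathscr J$-classes of idempotents are $\mathscr H$-classes, since the idempotents generate a $\mathscr J$-trivial monoid). Throughout I work with $x\in fMe$, so automatically $xe = x$ and $fx = x$, hence $x\in Me\cap fM$; by stability and the remarks in Section~3, $x\in \til L_e$ iff the right stabilizer of $x$ contains $e$ as its unique minimal-left-ideal idempotent, and dually for $\til R_f$.

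\textbf{(1) implies (2).} Suppose $x\in \Irr(e,f)$. By definition $x\in(\til R_f\cap \til L_e)\setminus(R_f\cup L_e)$; in particular $x\notin L_e$, and since in a finite monoid $\til L_e\setminus L_e$ consists of non-regular elements (as recalled in Section~3, using $m\mathrel{\mathscr L} n\iff m\mathrel{\til{\mathscr L}} n$ for regular elements), $x$ is not regular. It remains to show $x\notin fI(f)I(e)e$. Suppose for contradiction that $x = fa\cdot be = (fa)(be)$ with $a,b\in I(e)\cap I(f)$ — more precisely $x = zw$ with $z\in fI(f)$ and $w\in I(e)e$. Then $w\in I(e)e$ means $e\notin MwM$, so $w\notin \til L_e$: indeed if $w\in\til L_e$ then $w\mathrel{\til{\mathscr L}} e$ forces, via the discussion preceding Proposition~\ref{p:char.til}, that any idempotent in $\til L_w$ is $\mathscr L$-equivalent to $e$, hence $\mathscr J$-equivalent to $e$, contradicting $e\notin MwM$ (here I should double-check that $w\in\til L_e$ does give $e\leq_{\mathscr J} w$; this uses that $\til L_e$ contains $e$ and the semilattice-like behaviour of idempotents). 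Similarly $z\in fI(f)$ gives $z\notin\til R_f$. So the factorization $x = zw$ violates the irreducibility clause in the definition of $\Irr(e,f)$ (which says any factorization $x=yz$ with $y\in\til R_f$, $z\in\til L_e$ has $y\in R_f$ or $z\in L_e$) — once I argue that a factorization through $fI(f)$ and $I(e)e$ can be massaged into one through $\til R_f$ and $\til L_e$ with neither factor "good". The cleanest route: since $x\in\til R_f\cap\til L_e$, write $x = (xe')(e''x)$-type decompositions using idempotents in the stabilizers; the assumption $x\in fI(f)I(e)e$ should be shown to directly contradict $x\in\Irr(e,f)$ by producing the forbidden factorization.

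\textbf{(2) implies (1).} Suppose $x\in fMe$ is non-regular and $x\notin fI(f)I(e)e$. First I claim $x\in\til L_e$: since $M$ is a block group it is left Fountain, so $\til L_x$ contains an idempotent $u$; then $xu = x$ so $u\in$ right stabilizer of $x$, and $u\leq_{\mathscr J}$ (stuff); I need $u\mathrel{\mathscr J} e$, equivalently $u\mathrel{\mathscr L} e$ since idempotents in one $\mathscr J$-class of a block group form a single $\mathscr H$-class hence are $\mathscr L$-comparable only if equal — wait, that is not quite it. Let me instead use: $x\in Me$ and $x$ non-regular; $x\notin fI(f)I(e)e$ must be leveraged to force $u\mathrel{\mathscr L} e$. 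Actually the honest approach is: $x\in\til L_e$ iff $x\notin I(e)e\cap$ (complement), and the content of $x\notin fI(f)I(e)e$ together with $x\in fMe$ should, via Proposition~\ref{p:kick.out}'s argument specialized to block groups, force $x\in\til L_e$ and dually $x\in\til R_f$. Then $x\notin L_e$ and $x\notin R_f$ because $x$ is non-regular (regularity is equivalent to $L_x\cap E\neq\emptyset$, and $x\in L_e$ would make $x$ regular). Finally, for the irreducibility clause: if $x = yz$ with $y\in\til R_f$, $z\in\til L_e$, and neither $y\in R_f$ nor $z\in L_e$, then $y,z$ are both non-regular, so $y\in\til R_f\setminus R_f$ and $z\in\til L_e\setminus L_e$. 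I need to derive $x\in fI(f)I(e)e$ from this, contradicting the hypothesis. The point: $y$ non-regular with $y\in\til R_f$ means $y\notin R_f$ hence $fy\neq$ anything making $y\mathrel{\mathscr R} f$, and in fact one shows $y\in fI(f)$ (writing $y = fy'$ and arguing $y'\in I(f)$ since $f\notin MyM$ — this needs $y$ non-regular $\Rightarrow f\notin MyM$, which holds because $y\in\til R_f$ and $y$ regular would give $y\mathrel{\mathscr R} f$; conversely $y\notin R_f$ and $y\leq_{\mathscr R} f$... hmm, I actually need $f\notin MyM$, i.e. $y\in I(f)$, which is stronger). This is the delicate point.

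\textbf{Main obstacle.} The heart of the matter — and where I expect to spend the most care — is the precise relationship between "$y\in\til R_f\setminus R_f$" (or non-regular) and "$y\in I(f)$", i.e. $f\notin MyM$. In a block group, $y\in\til R_f$ means the left stabilizer of $y$ has $f$ as its canonical idempotent; if $y$ were regular then $y\mathrel{\mathscr R} f$ so certainly $f\in MyM$, so the contrapositive gives: $f\notin MyM\Rightarrow y$ non-regular, but I want the converse direction or at least that non-regular $y\in\til R_f$ implies $y\in I(f)$. I believe this follows because $y\leq_{\mathscr J} f$ is impossible for non-regular $y\mathrel{\til{\mathscr R}}f$ — if $y\leq_{\mathscr J} f$ and $y\in\til R_f$ then by stability and the $\mathscr H = $ (trivial-ish) structure at $f$, $y$ would be $\mathscr J$-equivalent to $f$ hence regular. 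Pinning this down rigorously, using only stability, the structure of minimal ideals, and the fact that in $\mathbf{EJ}$ the idempotents generate a $\mathscr J$-trivial monoid, is the crux; everything else is bookkeeping with the definitions of $I(e)$, $I(f)$, and the factorization clauses. I would isolate this as a preliminary sublemma: \emph{for a block group and $x\in fMe$, $x\in\til L_e$ iff $x\notin I(e)e\cdot(\text{junk})$}, stated precisely so that both implications reduce to it and its dual.
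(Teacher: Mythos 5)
Your argument for (1) $\Rightarrow$ (2) rests on a false claim: that $w\in I(e)e$ forces $w\notin \til L_e$ (equivalently, that $w\mathrel{\til{\mathscr L}}e$ implies $e\leq_{\mathscr J}w$). This fails even for $\mathscr J$-trivial monoids: take $M=\{1,e,a,0\}$ with $0$ a zero, $e^2=e$, $ae=a$ and $ea=a^2=0$. The idempotents fixing $a$ on the right are exactly $\{1,e\}$, the same as those fixing $e$, so $a\mathrel{\til{\mathscr L}}e$; yet $MaM=\{a,0\}$, so $a\in I(e)e\cap\til L_e$. The whole point of $\til{\mathscr L}$ is to capture non-regular elements that need not be $\mathscr J$-above $e$ (compare Proposition~\ref{p:kick.out}, whose statement presupposes that $fI(f)e$ can meet $\til L_e$). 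Moreover, even if your claim were true, the strategy would not engage the definition of $\Irr(e,f)$: the irreducibility clause only constrains factorizations $x=yz$ whose factors \emph{do} lie in $\til R_f$ and $\til L_e$, so producing a factorization whose factors lie outside these classes contradicts nothing. The correct argument goes in the opposite direction, and this is precisely where the Fountain property of block groups (Corollary~\ref{c:examples}) enters: given $x=zw$ with $z\in fI(f)$ and $w\in I(e)e$, choose an idempotent $a\in\til R_z$; then $az=z$ gives $ax=x$, hence $af=f$ because $x\in\til R_f$, while $fz=z$ gives $fa=a$, so $a\mathrel{\mathscr R}f$ and therefore $z\in\til R_f$; dually $w\in\til L_e$. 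Irreducibility now forces $z\in R_f$ or $w\in L_e$, and either conclusion is absurd: writing $z=fm$ with $f\notin MmM$, $z\in R_f$ would give $f\in zM\subseteq MmM$, and similarly for $w\in I(e)e$.

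The direction (2) $\Rightarrow$ (1) is also left incomplete. The memberships $x\in\til R_f$ and $x\in\til L_e$ are only gestured at; the actual argument takes an idempotent $a\in\til R_x$, deduces $fa=a$ from $fx=x$, rules out $a\in fI(f)$ (otherwise $x=a(xe)\in fI(f)I(e)e$, since the non-regular $x\in Me$ lies in $I(e)$: $e\leq_{\mathscr J}x\leq_{\mathscr J}e$ would place $x$ in the regular class $J_e$), and then concludes $a\mathrel{\mathscr J}f$, hence $a\mathrel{\mathscr R}f$ by stability, whence $x\in\til R_f$. Finally, your ``crux sublemma'' is stated with the inequality reversed: for $y\in\til R_f$ one has $fy=y$, so $y\leq_{\mathscr J}f$ holds automatically, and what must be excluded is $f\leq_{\mathscr J}y$; this is immediate, since $f\leq_{\mathscr J}y\leq_{\mathscr J}f$ would make the non-regular $y$ a member of the regular $\mathscr J$-class $J_f$, and then $y=fy$ and stability give $y\in R_f$. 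So that step is easily repaired, but the false claim above and the resulting misdirection of the (1) $\Rightarrow$ (2) argument are genuine gaps.
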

\begin{proof}
Suppose that $x\in \Irr(e,f)$.  Then since $x\in \til R_f\cap \til L_e$ and $x\notin R_f\cup L_e$, we deduce that $x$ is not regular.   Suppose that $x=yz$ with $y\in fI(f)$ and $z\in I(e)e$.  Then if $a\in E(M)$ with $a\in \til R_y$, we have that $ay=y$ and so $ax=ayz=yz=x$.  Therefore, $af=f$ because $x\in \til R_f$.  But $fy=y$ implies $fa=a$ as $a\in \til R_y$.  Thus $f\mathrel{\mathscr R} a$ and so $y\in \til R_f$.  Similarly, $z\in \til L_e$.  Then by definition of irreducibility, we obtain that $y\in R_f$ or $z\in L_e$, contradicting that $y\in fI(f)$ and $z\in I(e)e$.  This shows that the first item implies the second.

Next suppose that $x\notin fI(f)I(e)e$ and $x$ is not regular. Then $x\notin J_f\cup J_e$ because $x$ is not regular.  Let $a\in E(M)$ with $a\in \til R_x$.  Then from $fx=x$, we must have $fa=a$.  If $a\in fI(f)$, then from $a(xe)=x$ and the non-regularity of $x$, we have that $x\in fI(f)I(e)e$, a contradiction.  Thus $J_a=J_f$ and so from $fa=a$ we have that $R_f=R_a$.  Thus $x\in \til R_f$.  Similarly, $x\in \til L_e$.  Suppose that $x=yz$ with $y\in \til R_f$ and $z\in \til L_e$.  Then $fy=y$, $ze=z$ and so from $x\notin fI(f)I(e)e$, we deduce that $y\in R_f$ or $z\in L_e$.  This completes the proof that $x$ is irreducible.
\end{proof}

Note that it follows from the second item of Proposition~\ref{p:jbelow} that $\Irr(e,f)$ is a $G_f\times G_e$-set under the action $(g,h)x= gxh^{-1}$.


\begin{Thm}\label{t:block.group.quiver}
Let $M$ be a block group and $e,f\in E(M)$. Let $\Bbbk$ be field such that the characteristic of $\Bbbk$ divides the order of no maximal subgroup of $M$ and $\Bbbk$ is a splitting field for each maximal subgroup of $M$.  Let $e_1,\ldots, e_n$ be a complete set of idempotent representatives of the regular $\mathscr J$-classes of $M$.  Then the quiver of $\Bbbk M$ is isomorphic to the directed graph that has vertices indexed by pairs $(e_i,[V])$ where $V$ is a simple $\Bbbk G_{e_i}$-module and edges as follows.  If $(e,[V])$ and $(f,[W])$ are two vertices, then the number of arrow from $(e,[V])$ to $(f,[W])$ is the multiplicity of $W\otimes D(V)$ as an irreducible constituent of the $\Bbbk [G_f\times G_e]$-permutation module $\Bbbk\Irr(e,f)$.
\end{Thm}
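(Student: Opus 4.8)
The plan is to deduce this from Theorem~\ref{t:ER.quiver}. Since $M$ is a block group, its idempotents generate a $\mathscr J$-trivial monoid, hence in particular an $\mathscr R$-trivial one, and (as recalled in the preliminaries) its sandwich matrices can be taken to be identity matrices, which are certainly right invertible over each $\Bbbk G_{e_i}$; so all hypotheses of Theorem~\ref{t:ER.quiver} are in force, and that theorem already gives the number of arrows from $(e,[V])$ to $(f,[W])$ as the multiplicity of $W\otimes D(V)$ in the $\Bbbk[G_f\times G_e]$-module $U^\flat$ defined there. Thus the whole task reduces to establishing, for fixed $e,f\in E(M)$, an isomorphism $U^\flat\cong \Bbbk\Irr(e,f)$ of $\Bbbk[G_f\times G_e]$-modules.

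The crucial observation is that for a block group the relation ``$x$ and $y$ act as the same partial injection on the right of $R_e$'' is \emph{trivial} on $L_e$, i.e.\ it forces $x=y$. Indeed, write each element of $L_e$ as $\rho_a g$ with $\rho_a$ the chosen representative of the $\mathscr R$-class $a$ inside $L_e$ and $g\in G_e$, and each element of $R_e$ as $h\lambda_b$ with $h\in G_e$. The hypothesis that the sandwich matrix is the identity shows, by a short computation, that $(h\lambda_b)(\rho_a g)\in R_e$ precisely when $a=b$, in which case the product is $hg$; hence the partial injection determined by $\rho_a g$ has domain the $\mathscr H$-class through $\lambda_a$ and acts there by right translation by $g$, which recovers both $a$ (from the domain) and $g$ (from the action). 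Equivalently, the ($\Bbbk$-linear version of the) natural map $T\colon \Bbbk L_e\to \Hom_{\Bbbk G_e}(\Bbbk R_e,\Bbbk G_e)$ from the preliminaries has the identity as its matrix, so is injective, so separates the basis elements $L_e$.

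Granting this, clause~(2) in the definition of the equivalence relation $\equiv$ of Theorem~\ref{t:ER.quiver} becomes vacuous, so $\equiv$ simply collapses $fI(f)I(e)e$ to a single class and leaves every other element of $fMe$ a singleton. Consequently the $G_f\times G_e$-set $X$ is $fMe\setminus fI(f)I(e)e$, and the ``difference'' part of $U^\flat$ vanishes, since each generator $[x]_\equiv-[y]_\equiv$ with $x,y\in L_e$ acting alike has $x=y$. Hence $U^\flat$ is the span of the singleton classes $[z]_\equiv$ with $z\in (fM\cap\til L_e)\setminus L_e$ and $z\notin fI(f)I(e)e$; and since $\til L_e\subseteq Me$, these $z$ run exactly over the set $\{\,z\in fMe\cap\til L_e : z\notin L_e,\ z\notin fI(f)I(e)e\,\}$.

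Finally I would identify this last set with $\Irr(e,f)$ using Proposition~\ref{p:jbelow}: if $z$ lies in it, then $z\in\til L_e\setminus L_e$ is non-regular and avoids $fI(f)I(e)e$, so $z\in\Irr(e,f)$ by the implication $(2)\Rightarrow(1)$ there; conversely any $x\in\Irr(e,f)\subseteq (\til R_f\cap\til L_e)\setminus(R_f\cup L_e)$ lies in $fMe\cap\til L_e$, not in $L_e$, and not in $fI(f)I(e)e$ by $(1)\Rightarrow(2)$. As $G_f\times G_e$ acts by $(g,h)\cdot x=gxh^{-1}$ on both sides, the identity map is the desired $\Bbbk[G_f\times G_e]$-module isomorphism $U^\flat\cong\Bbbk\Irr(e,f)$, and the theorem follows. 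The only genuinely delicate point is the triviality of the partial-injection relation on $L_e$, which is precisely where the block-group hypothesis (equivalently, the identity sandwich matrices) enters; everything else is bookkeeping with the definitions in Theorem~\ref{t:ER.quiver} and Proposition~\ref{p:jbelow}.
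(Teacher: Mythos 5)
Your proposal is correct and its skeleton coincides with the paper's proof: apply Theorem~\ref{t:ER.quiver}, show that the relation $\sim$ (same partial action on the right of $R_e$) is equality on $L_e$, so that $\equiv$ only collapses $fI(f)I(e)e$ and $U^\flat$ has basis $(fM\cap \til L_e)\setminus (L_e\cup fI(f)I(e)e)$, and then identify this basis, $G_f\times G_e$-equivariantly, with $\Irr(e,f)$ via Proposition~\ref{p:jbelow}. The one point where you genuinely diverge is the justification of the key step that $\sim$ is trivial on $L_e$: the paper gets this from the dual of Proposition~\ref{p:ER} --- in a block group the idempotents generate an $\mathscr L$-trivial monoid, so $M$ acts by partial injections on the \emph{left} of $L_e$, and hence $rx=ry\in R_e$ with $x,y\in L_e$ forces $x=y$ --- whereas you derive it from the normalization of the sandwich matrix of $J_e$ to an identity matrix, recovering $a$ and $g$ from the partial map attached to $\rho_a g$. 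Your computation from that normalization is fine (and the fact itself is standard: in a block group each $\mathscr R$- and $\mathscr L$-class of a regular $\mathscr J$-class contains exactly one idempotent), but be aware that the identity-matrix normalization is precisely one of the assertions the preliminaries state with the caveat that they are not proved in the paper, so as written you lean on an unproved (though true) statement, while the paper's route is self-contained modulo the cited Proposition~\ref{p:ER}. Both arguments buy the same thing, namely injectivity on $L_e$ of the map sending $x$ to its partial action on $R_e$; if you want your version to stand alone, add the one-line argument for the identity form of the sandwich matrix of a block group.
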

\begin{proof}
We apply Theorem~\ref{t:ER.quiver}.  First note  that by the dual of Proposition~\ref{p:ER}, $M$ acts by partial injections on the left of $L_e$.  Hence, if $x,y\in L_e$ and $r\in R_e$ with $rx=ry\in R_e$, then $x=y$.  It follows that the equivalence relation $\sim$ is equality on $L_e$ and hence $\equiv$ is simply the equivalence relation on $fMe$ identifying all elements of $fI(f)I(e)e$ and $U^\flat$ is the $\Bbbk[G_f\times G_e]$-module with basis the elements of $(fM\cap \til L_e)\setminus (L_e\cup fI(f)I(e)e)$. But this is $\Irr(e,f)$ by  Proposition~\ref{p:jbelow}.
\end{proof}

The following theorem generalizes one of the main result of~\cite{Jtrivialpaper} from $\mathscr J$-trivial monoids to arbitrary aperiodic block groups.

\begin{Cor}
Let $M$ be an aperiodic block group and $\Bbbk$ a field.  Fix $e_1,\ldots,e_n$ as set of idempotent representatives of the regular $\mathscr J$-classes of $M$.  Then the quiver of $\Bbbk M$ is isomorphic to the quiver with vertex set $e_1,\ldots, e_n$ and with $|\Irr(e_i,e_j)|$ arrows from $e_i$ to $e_j$.
\end{Cor}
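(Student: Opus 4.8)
The plan is to deduce this corollary as a direct specialization of Theorem~\ref{t:block.group.quiver} to the aperiodic case. First I would note that an aperiodic block group is in particular a block group, and that since every maximal subgroup $G_{e_i}$ is trivial the field hypotheses of Theorem~\ref{t:block.group.quiver} are automatically met: the characteristic of $\Bbbk$ cannot divide $|G_{e_i}|=1$, and $\Bbbk$ is trivially a splitting field for the trivial group. Hence Theorem~\ref{t:block.group.quiver} applies to $\Bbbk M$ with no restriction on the field $\Bbbk$.

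Next I would unwind the vertex set. Since each $G_{e_i}$ is trivial, its only simple module is the one-dimensional trivial $\Bbbk G_{e_i}$-module, so the pairs $(e_i,[V])$ indexing the vertices of the quiver in Theorem~\ref{t:block.group.quiver} are in bijection with the regular $\mathscr J$-class representatives $e_1,\ldots,e_n$, which gives the asserted vertex set.

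Then I would count the arrows. Fix $e=e_i$ and $f=e_j$. The group $G_f\times G_e$ is trivial, so $\Bbbk[G_f\times G_e]\cong \Bbbk$; the only simple modules $V$ and $W$ are trivial, so $W\otimes D(V)$ is the one-dimensional trivial module $\Bbbk$; and the permutation module $\Bbbk\Irr(e,f)$ is just a $\Bbbk$-vector space of dimension $|\Irr(e,f)|$, which as a module over the one-element group is a direct sum of $|\Irr(e,f)|$ copies of the trivial module. Therefore the multiplicity of $W\otimes D(V)$ as an irreducible constituent of $\Bbbk\Irr(e,f)$ equals $|\Irr(e_i,e_j)|$, and by Theorem~\ref{t:block.group.quiver} this is precisely the number of arrows from $e_i$ to $e_j$.

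Because every step is a straightforward specialization, I do not expect any genuine obstacle; the only point requiring a moment's attention is verifying that the constraints on $\Bbbk$ in Theorem~\ref{t:block.group.quiver} become vacuous when all maximal subgroups are trivial, which I would dispatch at the very start of the argument.
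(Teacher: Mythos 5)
Your proposal is correct and is exactly the argument the paper intends: the corollary is stated as an immediate specialization of Theorem~\ref{t:block.group.quiver}, and your observations that the field hypotheses become vacuous for trivial maximal subgroups, that the vertices reduce to the regular $\mathscr J$-class representatives, and that the multiplicity count reduces to $|\Irr(e_i,e_j)|$ are precisely the unwinding the paper leaves to the reader.
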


\end{document}